\providecommand{\customgenericname}{}
\newcommand{\newcustomtheorem}[2]{%
  \newenvironment{#1}[1]
  {%
   \renewcommand\customgenericname{#2}%
   \renewcommand\theinnercustomgeneric{##1}%
   \innercustomgeneric
  }
  {\endinnercustomgeneric}
}
\numberwithin{equation}{section}
\newcommand{\R}{\mathbb{R}}
\newcommand{\E}{\mathbb{E}}
\newcommand{\J}{\mathsf{J}}
\newcommand{\F}{\mathcal{F}}
\newcommand{\prob}{\mathbb{P}}
\newcommand{\A}{\mathcal{A}}
\newcommand{\1}{\mathds{1}}
\newcommand{\tonde}[1]{\left({#1}\right)}
\newcommand{\eps}{\varepsilon}
\let\liminf\relax
\DeclareMathOperator*{\liminf}{\underline{lim}}
\let\limsup\relax
\DeclareMathOperator*{\limsup}{\overline{lim}}
\DeclareMathOperator{\supp}{supp}
\theoremstyle{plain}
\newtheorem{thm}{Theorem}[section]
\newtheorem{lemma}[thm]{Lemma}
\newtheorem{prop}[thm]{Proposition}
\newtheorem{corollary}{Corollary}[thm]
\theoremstyle{definition}
\newtheorem{definition}{Definition}
\newtheorem{remark}{Remark}
\title[Cooperation, Correlation and Competition in Ergodic Singular Control Games]{Cooperation, Correlation and Competition in Ergodic \textit{N}-player Games and Mean-field Games of Singular Controls: A Case Study}
\date{\today}
\author[F.~Cannerozzi]{Federico~Cannerozzi\textsuperscript{\MakeLowercase{a},\MakeLowercase{b},1}}
\author[G.~Ferrari]{Giorgio~Ferrari\textsuperscript{\MakeLowercase{a},2}}
\thanks{\noindent \textsuperscript{a} Center for Mathematical Economics (IMW), Bielefeld University, Universit\"atsstrasse 25, 33615, Bielefeld, Germany.
\\
\noindent \textsuperscript{b} Department of Mathematics ``Federigo Enriques'', University of Milan, Via Saldini 50, 20133, Milan, Italy.}
\thanks{\noindent
\noindent \textsuperscript{1} E-mail: \href{mailto:federico.cannerozzi@uni-bielefeld.de}{federico.cannerozzi@uni-bielefeld.de}.
\\
\noindent \textsuperscript{2} E-mail: \href{mailto:giorgio.ferrari@uni-bielefeld.de}{giorgio.ferrari@uni-bielefeld.de}}
\begin{document}

\begin{abstract}
We consider a class of $N$-player games and mean-field games of singular controls with ergodic performance criterion, providing a benchmark case for irreversible investment games featuring mean-field interaction and strategic complementarities.
The state of each player follows a geometric Brownian motion, controlled additively through a nondecreasing process, while agents seek to maximize a long-term average reward functional with a power-type instantaneous profit, under strategic complementarity.
We explore three different notions of optimality, which, in the mean-field limit, correspond to the mean-field control solution, mean-field coarse correlated equilibria, and mean-field Nash equilibria.
We explicitly compute equilibria in the three cases and compare them numerically, in terms of yielded payoffs and existence conditions. 
Finally, we show that the mean-field control and mean-field equilibria can approximate the cooperative and competitive equilibria, respectively, in the corresponding $N$-player game when $N$ is sufficiently large.
Our analysis of the mean-field control problem features a novel Lagrange multiplier approach, which proves crucial in establishing the approximation result, while the treatment of mean-field coarse correlated equilibria necessitates a new, specifically tailored definition for the stationary setting.
\end{abstract}

\maketitle

{\textbf{Keywords}}: mean-field games; $N$-player games; singular stochastic control; ergodic reward; Nash equilibrium; Pareto efficiency; coarse correlated equilibrium; strategic complementarities.

\smallskip

{\textbf{MSC2020 subject classification}}: 91A11, 91A15, 91A16, 49N80.

\section{Introduction}

In this paper, we investigate ergodic stochastic games of singular control in both competitive and cooperative settings, considering scenarios with a finite number $N$ of players as well as in the mean-field limit. In the formulation with $N$ players, each symmetric player, indexed by $i=1,2,\dots,N$, seeks to maximize a long-term average reward functional. The instantaneous profit at time $t \geq 0$ is given by $\pi(X^i_t,\theta^N_t)=(X^i_t)^{\alpha}(\theta^N_t)^{\beta}$, where $\alpha, \beta \in (0,1)$. Here, $X^i_t$ represents the current level of the state variable for agent $i$, and $\theta^N_t=\frac{1}{N-1}\sum_{j\neq i}X^j_t$ denotes the empirical average of the state processes of all the other $N-1$ agents. Each agent $i$ can control the geometric dynamics of $X^i$ by increasing its level through a nondecreasing control process $\nu^i$, with the cost of control being proportional to the effort expended.

The game under study makes an important case study for ergodic MFGs of irreversible investment under strategic complementarities.
In this regard, the state variable of each agent can be the output or the goodwill stock, which is increased by irreversible investment or advertising, respectively.
The monopolistic case ($\beta = 0$) has a well-known role of benchmark example in literature (see, among the many others, \cite{chiarolla2005irreversible,pham_guo2005SPA,pham2006explicit_solution_irreverible,riedel2011irreversible,steg2012irreversible}).
We introduce competition among $N$ identical firms, in such a way that it shows strategic complementarity, in that each firm's profit function $\pi(x,\theta)$ is strictly increasing in both its own state $x$ and in the interaction term, resulting in the mixed derivative of $\pi$ being strictly positive.
Many contexts naturally give rise to this scenario, such as Cournot oligopoly with complementary products or advertising games (see, e.g., \cite{vives2005complementarities,vives2005games,vives2018strategic}).
The instantaneous profit can be derived through an isoelastic inverse demand function, depending on each firm's productivity and of the aggregate level of production or goodwill in the entire market, through a price index that features constant elasticity of the substitutes, in a similar fashion of \cite[p. 7, footnote 5]{achdou2014pdemodels} or \cite[Section 2.1]{calvia2024existenceuniquenessresultsmeanfield}.
Finally, the ergodic structure of the reward functional we consider is relevant in the context of investment into public goods, in which it might be important to take care of the payoffs received by successive generations.

\smallskip
We take into account both the cooperative and competitive behavior of the agents/players.
Regarding the first one, we introduce the problem of a central planner, who seeks to maximize the average of the rewards of all $N$ agents.
As for the competitive behavior, we introduce the notion of coarse correlated equilibria (CCE), which extends the more common one of Nash equilibria (NE), by introducing correlation between players' strategies without requiring them to cooperate.
CCEs can be interpreted as follows in an $N$-player setting:
A moderator, or correlation device, picks a strategy profile for the $N$ players randomly according to some publicly know distribution; then, she recommends it privately to the players.
Before the lottery is run, each player has to decide whether to commit to the moderator recommendation (whatever it will be), assuming that all other players commit, only knowing the lottery distribution.
If a player commits, then she is communicated in private her (and only her) selected strategy, and must follow it.
Instead, if a player deviates, she will do so without any information on the outcome of the lottery, assuming that all other players follow the private suggestion they receive.
A lottery is a CCE if every player prefers to commit rather than unilaterally deviate, assuming that all others do commit.

\smallskip
Given the complexity of constructing equilibria for $N$-player games in continuous-time and space, by relying on the theory of mean-field games (see the pioneer works \cite{huang_malhame_caines,lasry_lions}), we introduce the mean-field version of the previously described stochastic game.
We explicitly construct the solution to the mean-field central planner control problem, and we determine sufficient conditions for the existence of coarse correlated equilibria (based on suitable recommendations of the moderator), as well as the mean-field NEs.
In the mean-field game, the representative agent reacts to the long-term average of the distribution of the population, which is represented by a scalar parameter $\theta$. The stationary one-dimensional setting of the mean-field game and control problem allows for explicit characterizations of the equilibria (see also \cite{basei2022nonzero, cao2023stationary, cao2022mfgs,  christensen2021competition} and references therein in the context of singular/impulse control games).
Our definitions and analysis are justified by proving that mean-field equilibria define $\varepsilon$-equilibria for the related $N$-player games, with vanishing approximation error.

\vspace{0.25cm}

\textbf{Our contributions.} 
Our main results are as follows. First, to the best of our knowledge, this is the first paper that constructs the explicit solution to an ergodic mean-field singular stochastic control problem (Theorem \ref{mfc:thm:optimal_control}) and proves that its solution can approximate the solution to a central planner problem aiming to achieve Pareto efficiency in the game with $N$ players (see Theorem \ref{central_planner:thm:approximation}).
We construct the mean-field solution using a Lagrange-multiplier approach, which transforms the original McKean-Vlasov control problem into a two-stage optimization problem, in which one first optimizes over the admissible control variables and then over the mean-field parameter.
We stress that this methodology allows us to separate the two aspects of the MFC problem into distinct steps, to be carried out sequentially. This approach, which is typical of MFGs, is novel in the mean-field singular control framework.
Moreover, we characterize the Lagrange multiplier in terms of the optimal control for the mean-field central planner problem (see Lemma \ref{central_planner:lemma:envelope_theorem}).
In particular, we show that the Lagrange multiplier admits a probabilistic representation as the derivative of the mean-field control problem's value function with respect to the mean-field parameter (see Lemma \ref{central_planner:lemma:envelope_theorem} and Remark \ref{rmk:derivative_value_function} below).
Such a probabilistic representation of the Lagrange multiplier is the new key ingredient for suitably applying the Law of Large Numbers and completing the proof of the approximation result in Theorem \ref{central_planner:thm:approximation}.
As a byproduct of our analysis, we highlight the derivation of novel first-order conditions for optimality in ergodic singular stochastic control problems (see Lemma \ref{lemma:first_order_condition} below), which are of independent interest.

Secondly, in the competitive setting, we introduce the concept of mean-field coarse correlated equilibria in stationary MFGs.
Introduced implicitly in \cite{hannan1957} and explicitly by \cite{moulin_vial1978}, CCEs generalize both Nash equilibria, by allowing agents to adopt correlated strategies without any cooperation, and correlated equilibria (CE) (see Aumann's \cite{aumann1974,aumann1987}).
CCEs have been shown in game theory and computational literature to arise naturally from no-regret adaptive learning procedures (\cite{hannan1957},\cite{hart_mascolell_regret_based},\cite[Section 17.4]{roughgarden_2016}). On the contrary, agents are proved to actually behave according to a Nash equilibrium only under strong rationality assumptions (see e.g. \cite{gilboa1989nash}).
Moreover, incorporating a mediator has been proven useful in at least two ways.
First, it makes possible for CCEs to outperform NEs in terms of payoff: we refer to \cite{DokkaTrivikram2022Edia,Moulin2014Coarse} for contributions in emissions abatement games, and to \cite{neyman1997correlated,moulinraysengupta2014} for contributions regarding potential games. In particular, in this class of games the only CE is given by the Nash equilibrium itself, which makes important to consider the more general notion of CCEs.
Moreover, correlation between players is important to coordinate the agents between equilibria, in the case of multiple equilibria (see the famous ``Battle of Sexes'' and ``Games of Chicken'' games in \cite[Chapter 8, Examples 8.1 and 8.3]{maschler_solan_game_theory}, where this phenomenon can be appreciated already by taking into account CEs).
This feature can potentially bring an important benefit to the analysis, as the mean-field game under study presents strategic complementarity and strategic complementarities are intimately related to the existence of multiple equilibria (see the seminal \cite{vives1990nash} and also \cite{adlakha2013mean, alvarez2023price, alvarez2023strategic, dianetti2022strong, dianetti2024multiple,  dianetti2021submodular, dianetti2023unifying} for contributions on mean-field games).
We determine sufficient conditions for the existence of coarse correlated equilibria, with singular and regular (absolutely continuous) recommendations for the competitive mean-field game of singular controls (see Propositions \ref{cce:regular:prop_optimality} and \ref{cce:singular:prop_optimality} below).
The goodness of our definition is justified by showing that any mean-field CCE induces a sequence of approximate CCEs in the underlying $N$-playe game with underlying error.
Through a numerical analysis, we show that, in the case in which the correlation device has Gamma distribution, there may exist infinitely many mean-field CCEs which outperform the Nash equilibrium, both in the cases of the singular and the regular recommendation, and we compare it with the MFC solution reward.
We perform a comparative statics, in order to highlight the role of the parameters on the existence of mean-field CCEs which outperform the mean-field NEs.

Finally, we completely characterize the Nash equilibria in the ergodic mean-field game and prove that their existence and uniqueness depend on the strength of the strategic complementarity, measured by the parameter $\beta \in (0,1)$ (see Theorem \ref{mfg:thm:nash_eq} below). In particular, if $0 < \beta < 1-\alpha$ or $1-\alpha < \beta < 1$, then a unique Nash mean-field equilibrium exists, where the state process is reflected upwards \`a la Skorohod at an explicitly given barrier. On the other hand, if $\beta=1-\alpha$, either infinitely many equilibria exist, each of reflecting type, or none exist.
The non-existence of NE can be compensated by the existence of CCEs.
Indeed, we are able to numerically show that mean-field CCEs may exist even when mean-field NEs do not, thus highlighting the value of correlation in the game.
The occurrence of this phenomenon is due to the nature of our methodology for computing mean-field CCEs, which does not involve the usual optimization-fixed point scheme used to compute mean field NEs.
Since our results are still valid even when the mean-field NE fixed-point condition fails to hold, existence of mean-field CCEs may hold even when mean-field NEs fail to exist.

Despite the specific setting in which the game is formulated (geometric Brownian dynamics and profit function of power type), the analysis reveals a rich structure of the solution while also requiring technical results and arguments.
Given the benchmark nature of the model, it is important to provide useful methodology of explicitly computing equilibria, to identify the assumptions which yield existence or uniqueness, and to compare them.
This is made possible by the explicit dependence of the solutions on the parameters. 
In this regard, the numerical illustrations of Section \ref{sec:numerics} provide insights and operational contribution.

To the best of our knowledge, our paper is the first to provide a comprehensive analysis of a stationary mean-field singular stochastic game with strategic complementarities by explicitly characterizing its Pareto efficient outcome, as well as its Nash and coarse correlated equilibria.

\vspace{0.25cm}

\noindent \textbf{Related literature.} Our paper contributes to various streams of literature. Firstly, we contribute to the literature on one-dimensional singular stochastic control problems with ergodic performance criteria. Among others, we refer to \cite{alvarez2018stationary, benevs1980some, cohen2022optimal, jack2006singular, karatzas1983class, weerasinghe2002stationary} where explicit solutions to ergodic bounded-variation stochastic control problems have been derived.

Secondly, we add to the literature on stationary mean-field games with ergodic performance criterion.
Those games have nowadays received many contributions when considering regular controls, especially from the PDE community.
Considered already in the seminal paper \cite{lasry_lions}, various problems have been addressed, including existence results (e.g. \cite{araposthatis2017mfgs_ergodic_cost,cirant2016stationary,dragoni2018ergodic_mfgs}), forward-convergence results, i.e., the convergence of Nash equilibria in the $N$-player game to stationary MFG solutions (e.g. \cite{bardi2014sicon,feleqi2013derivation_ergodic_mfgs}), and the long-time convergence of finite-horizon MFGs to stationary MFGs (e.g. \cite{bardi2024SIMAN,cardaliaguet2012long_time_averages,cecchin2024turnpike}).
On the other hand, the field of mean-field games with singular stochastic controls presents a still limited but rapidly increasing number of contributions.
They have focused both on abstract results regarding the existence and uniqueness of equilibria (see \cite{cohen2024existence, denkert2024extended, dianetti2023unifying, fu2017mean, fu2023extended}), as well as on explicit characterizations of the Nash solution (see \cite{campi2022mean, cao2022mfgs, cao2023stationary, dianetti2023ergodic, guo2019stochastic}).
Particularly related to our paper are \cite{cao2023stationary} and \cite{cohen2024existence}. In \cite{cao2023stationary}, in the context of a mean-field game with singular controls, stationary discounted and ergodic Nash equilibria are explicitly constructed and related via the vanishing discount factor method. In the very recent \cite{cohen2024existence}, existence of optimal controls for stationary singular single-agent control problems as well as the existence of mean-field game equilibria for stationary singular MFGs are derived through a relaxed approach.
Contributions on stationary MFC problems are less numerous.
Here, we refer to \cite{albeverio2022sicon,bao_tang2023ergodic_ctrl_mkv,bayraktar2024discrete,christensen2021competition,fuhrman2025ergodicMKV}.
In \cite{bayraktar2024discrete}, an ergodic MFC control problem is addressed in a discrete framework.
In \cite{bao_tang2023ergodic_ctrl_mkv}, the authors consider the ergodic MFC problem, and prove the existence and uniqueness of the viscosity solution to a fully nonlinear PDE on the Hilbert space $L^2$ of square integrable random variables.
The long-time convergence of finite-horizon MFC problem value function to the value of the ergodic MFC problem is also addressed.
In \cite{fuhrman2025ergodicMKV} the authors provide interesting complementary to \cite{bao_tang2023ergodic_ctrl_mkv}, by studying the associated HJB equation on the Wasserstein space of square integrable probability measures directly, by relying on the Abelian limit of the discounted MKV control problems studied in \cite{rudà2025infinitetimehorizonoptimal}.
In \cite{albeverio2022sicon}, existence and uniqueness for a class of ergodic MFC problems is showed. In addition, they provide an $N$-agents Markovian control problem, for which the optimal control can be explicitly characterized, and show that its payoff, thus the value function, converges towards the value of the ergodic MFC problem.
Finally, \cite{christensen2021competition} contains an interesting study of MFGs of optimal harvesting, in which solutions to both the MFG and the MFC problem are computed and compared.

Thirdly, we contribute to the literature studying games with strategic complementarities (also known as submodular/supermodular games) and the potential emergence of multiple equilibria. This class of games has garnered significant attention in Economics. As Xavier Vives asserts in \cite{vives2018strategic}, ``Complementarities are intimately linked to multiple equilibria and have a deep connection with strategic situations, and the concept of strategic complementarity is at the center stage of game-theoretic analyses.'' Among the myriad contributions, we refer to the deterministic games considered in \cite{vives1990nash, vives2005complementarities, vives2005games}, as well as to the dynamic stochastic formulations presented in \cite{adlakha2013mean, alvarez2023price, alvarez2023strategic, carmona2017mean, dianetti2024multiple} and references therein.

Finally, we connect to recent works dealing with correlated and coarse correlated equilibria in mean-field games.
While well-known in game theory, correlated and coarse correlated equilibria have been considered in the mean-field games literature very recently.
In the works \cite{bonesini_thesis,bonesiniCE,campifischer2021} by Bonesini, Campi and Fischer, existence and convergence results are established for correlated equilibria in mean field games with discrete time and finite state and action spaces.
A second group of papers by Laurière et al. \cite{lauriere2024learningmeanfieldgames} and Muller et al. \cite{muller2022learningCE,muller2021learning} considers both CEs and CCEs in a similar setting.
\cite{lauriere2024learningmeanfieldgames,muller2022learningCE,muller2021learning} contain an extensive discussion of learning algorithms for approximating Nash equilibria, CEs and CCEs in the mean field limit.
Finally, \cite{campi2023LQ,campi_cannerozzi_fischer2023coarse,cannerozzi2025thesis} consider coarse correlated equilibria in continuous-time MFGs.
The latter papers are particularly relevant in our context, since we build on the definition and intuitions developed therein.

\vspace{0.25cm}

\noindent \textbf{Structure of the paper.}
The rest of the paper is organized as follows: Section \ref{sec:N_player_game} presents the $N$-player game, and Section \ref{sec:mfg} introduces the corresponding mean-field formulation.
Section \ref{sec:assumptions} details the assumptions and includes some auxiliary control-theoretic results.
In Section \ref{sec:cooperative}, the mean-field control problem is solved, and its connection with the central planner's optima are established.
Section \ref{sec:competitive} characterizes both coarse correlated equilibria and Nash equilibria in the mean-field game, and discusses their relationships with the equilibria in the $N$-player game.
Section \ref{sec:numerics} numerically illustrates the findings from previous sections. Finally, the \nameref{sec:appendix} contains technical proofs and lemmata.

\section{The \textit{N}-player Game}\label{sec:N_player_game}

Let $(\Omega, \F,\mathbb F = (\F_t)_{t \geq 0^{-}},\prob)$ be a filtered probability space satisfying the usual assumptions.
Let $(W^i)_{i \geq 1}$, $W$ be a sequence of independent $\mathbb{F}$-Brownian motions, and let $\xi$, $(\xi^i)_{i \geq 1}$ be a sequence of i.i.d. positive random variables with distribution $\mu_0 \in \mathcal{P}(\R_+)$.
We assume that they are independent from $W$ and $(W^i)_{i \geq 1}$, and they are $\F_{0^{-}}$-measurable.

\smallskip
We consider the following set of strategies, to be subject to further restrictions in the following:
\begin{align*}
    \A := & \{ \nu:\Omega \times \R_+ \to \R_+, \mbox{ $\mathbb{F}$-adapted and such that } t \mapsto \nu_t \mbox{ is a.s.} \\ 
	& \hspace{1cm} \mbox{nondecreasing, right-continuous, } \nu_{0^-} = 0 \mbox{ and } \E[\nu_T]<\infty \; \forall \, T > 0 \}. 
\end{align*}

Let $N \geq 2$.
We denote a vector of strategies $(\nu^1,\dots,\nu^N) \in \A^N$ by $\boldsymbol{\nu}^N$. We refer to $\boldsymbol{\nu}^{N} \in \A^N$ as a strategy profile.
We denote by $\boldsymbol{\nu}^{-i,N}=(\nu^{1},\dots,\nu^{i-1},\nu^{i+1},\dots,\nu^{N})$ the vector of strategies of players $j \neq i$, and we denote the vector of strategies $\boldsymbol{\nu}^N$ also by $(\nu^{i},\boldsymbol{\nu}^{-i,N})$.

\smallskip
Let $\delta$, $\sigma$ be in $\R_+ = (0,+\infty)$.
For any strategy profile $\boldsymbol{\nu} \in \A^N$, we consider the following dynamics:
\begin{equation}\label{N_player:dynamics}
    dX^{\nu^{i}}_{t}=-\delta X^{\nu^{i}}_{t}dt+\sigma X^{\nu^{i}}_{t} dW^i_t+d\nu^{i}_t, \quad  X_{0^-}^{\nu^{i}}=\xi^i,
\end{equation}
for any $i=1,\dots,N$.
Observe that, for any $\boldsymbol{\nu}^N \in \A^N$, there exists a unique strong solution to \eqref{N_player:dynamics} (see, e.g., \cite[Theorem 7, Chapter V]{protter2005stochastic}).
Actually, one has
\[
X^{\nu^i}_t = X^{i,0}_t\left( \xi^i + \int_0^t \frac{d\nu^i_s}{X^{i,0}_s} \right),
\]
where $X^0 = (X^{1,0},\dots,X^{N,0})$ denotes the uncontrolled solution of \eqref{N_player:dynamics}, i.e. the one associated to $\nu^i \equiv 0$, and so that $X^{i,0}_{0^{-}} \equiv 1$.
Moreover, for any $i=1,\dots,N$, we define the flow of empirical averages of players $j \neq i$ by
\begin{equation*}
	\theta^{N,\boldsymbol{\nu}^{-i,N}}_t := \frac{1}{N-1} \sum_{j \ne i} X_t^{\nu^{j}}, \quad t \geq 0^{-}.
\end{equation*}

\smallskip
Let $\alpha$, $\beta$ in $(0,1)$, $q > 0$.
Each player is associated with the following reward functional:
\begin{equation}\label{N_player:payoff_funtional}
    \J^N(\nu^i,\boldsymbol{\nu}^{-i,N}) = \liminf_{ T \uparrow \infty} \frac{1}{T}\E\left[ \int_0^T (X^{\nu^i}_t)^\alpha \big( \theta^{N,\boldsymbol{\nu}^{-i,N}}_t \big)^{\beta}dt - q \nu^i_T\right],
\end{equation}
which can possibly be infinite.
Occasionally, we use the notation $\pi(x,\theta) = x^\alpha \theta^\beta$, for any $(x,\theta) \in \R_+^2$, and we write $\pi_x(x,\theta) = \partial_x \pi(x,\theta)$ and analogously $\pi_\theta(x,\theta) = \partial_\theta \pi(x,\theta)$.

\smallskip
When dealing with $N$-player games, we consider open-loop strategies. Roughly speaking, we allow each player to observe the noises of all players, as well as their initial position.
To this extent, denote by $\mathbb{F}^{N} = (\F^N_t)_{t \geq 0^{-}}$ be the $\prob$-augmentation of the filtration generated by the Brownian motions $(W^i)_{i=1}^N$ and initial data $(\xi^i)_{i=1}^N$.
\begin{definition}[Open-loop strategies for the $N$-player game]\label{N_player:admissible_strategies:def}
We say that a process $\nu \in \A$ is an open-loop strategy for the $N$-player game if $\nu$ is $\mathbb{F}^{N}$-progressively measurable. We denote the set of open-loop strategies for the $N$-player game by $\A_N$.
We denote by $\A_N^N$ the set of strategy profiles $\boldsymbol{\nu}^N \in \A^N$ so that $\nu^i \in \A_N$ for every $i=1,\dots,N$. We refer to $\boldsymbol{\nu}^N \in \A_N^N$ as an open-loop strategy profile.
\end{definition}

\smallskip
We are interested in different kinds of equilibria in the $N$-player system.
We deal both with the cooperative case and the competitive framework.

\subsection*{Cooperative framework}
In the cooperative case, we consider the central planner's optimization problem.
In order to introduce such optimization problem, we associate to the dynamics \eqref{N_player:dynamics} and payoff functionals \eqref{N_player:payoff_funtional} an $N$-dimensional control problem.
We consider the following functional
\begin{equation}\label{central_planner:payoff}
    \Bar{\J}^{N} (\boldsymbol{\nu}^N) := \frac{1}{N}\sum_{i=1}^N \J^{N}(\nu^{i},\boldsymbol{\nu}^{-i,N}), \quad \boldsymbol{\nu}^N \in \mathcal{A}_N^N,
\end{equation}
which can be regarded as a welfare utility for the $N$-player system.

\begin{definition}\label{def:central_planner_optima}
Let $\eps \geq 0$, $\mathcal{C} \subseteq \A^N_N$.
A strategy profile $\boldsymbol{\hat{\nu}}^N = (\hat{\nu}^1,\dots,\hat{\nu}^N) \in \mathcal{C}$ is $\eps$-optimal for the central planner optimization problem within the set of strategy profiles $\mathcal{C}$ if 
\[ 
    \Bar{\J}^N(\boldsymbol{\hat{\nu}}^N) \geq \Bar{\J}^N(\boldsymbol{\nu}^N)-\varepsilon,\quad \forall \, \boldsymbol{\nu}^N \in \mathcal{C}.
\]
If $\eps = 0$, we say that the strategy profile $\boldsymbol{\hat{\nu}}^N$ is optimal for the central planner within the set of strategy profiles $\mathcal{C}$.
\end{definition}
When dealing with the central planner's optimization problem, the players are referred to as agents, since there is no competition between them: the central planner picks herself a strategy for each player in order to maximize the welfare utility functional $\Bar{\J}^N$.
As a consequence, agents are not allowed to unilaterally deviate from the central planner's strategy profile.
It can be easily shown that if a strategy profile is an optimum of the central planner maximization problem, it is Pareto efficient as well (see, e.g., \cite[Chapter 5, Definition 5.1]{carmona2016lectures} for a definition of Pareto optimality).

\subsection*{Competitive framework}
We consider the notion of coarse correlated equilibria in the $N$-player game.
Our definition of CCEs explictly models moderator's lottery, by introducing a correlation device (see Definition \ref{def:correlation_device}).
This random variable represents the extra-randomness used by the moderator to randomize players' strategies independently of the idiosyncratic shocks and initial data that determine players' states' dynamics.
We then carefully take into account the different information structures available to the committing players and the deviating players, as the former will depend on the correlation device, while the latter will not.

\smallskip
We assume the following structural condition on the $\sigma$-algebra $\F_{0^{-}}$:
\begin{customassumption}{\textbf{U}}\label{assumption_F_0}
The $\sigma$-algebra $\F_{0^{-}}$ is large enough to support a $\F_{0^{-}}$-measurable uniform random variable independent of the initial data $\xi$, $(\xi^i)_{i \geq 1}$ and the noises $W$, $(W^i)_{i \geq 1}$.
\end{customassumption}

Next, we introduce correlation between players' strategies.
\begin{definition}[Correlating device]\label{def:correlation_device}
A correlation device is any random variable $Z: (\Omega,\F,\prob) \to (\R,\mathcal{B}_{\R})$ so that $Z$ is $\F_{0^{-}}$-measurable and independent of $\xi$, $W$, $(\xi^i)_{i\geq 1}$ and $(W^i)_{i \geq 1}$.
\end{definition}
Our definition of correlation device $Z$ must be interpreted in the following way: A a lottery over open loop strategies is run by the mediator, by means of the correlation device $Z$.
The extraction of the strategy happens before the game starts and it is independent of the initial data and idiosyncratic shocks that determine the random evolution of players' states.
These features are captured by the fact that the random variable $Z$ is $\F_{0^{-}}$-measurable and it is independent of $(\xi^i)_{i \geq 1}$ and $(W^i)_{i \geq 1}$. 
We observe that the correlation device is not exogenous in the sense of a \textit{common noise} (see, e.g., \cite{delarue16commonnoise}), as it is picked by the moderator.
Finally, Assumption \ref{assumption_F_0} guarantees the existence of the correlation device $Z$.
Next, we define the result of moderator's recommendation to the $N$ players:
\begin{definition}[Correlated strategy profile]\label{N_player:correlated_strategy_profile}
We define a correlated strategy profile as a pair $(Z,\boldsymbol{\lambda})$ so that
\begin{enumerate}[label=(\roman*)]
    \item $Z$ is a correlation device;
    \item $\boldsymbol{\lambda} = (\lambda^{i})_{i=1}^N$ is an admissible recommendation to the $N$ players; that is, for each $i=1,\dots,N$, $\lambda^{i}=(\lambda^{i}_t)_{t \geq 0^{-}}$ belongs to $\A$ and it is progressively measurable with respect to the $\prob$-augmentation of the filtration $(\sigma(Z)\vee\F^N_t)_{t \geq 0^{-}}$.
\end{enumerate}
\end{definition}
We notice that the definition of correlated strategy profile (Definition \ref{N_player:correlated_strategy_profile}) is made of two parts: a correlation device, which is chosen by the moderator as part of her recommendation to the players, and a strategy profile. The latter is by definition dependent of both the correlation device $Z$ and the first $N$ Brownian motions and initial data $(\xi^i,W^i)_{i=1}^N$.
In particular, it carries at least some of the information the the moderator uses to randomize players' strategies.

\smallskip
We now assign dynamics and rewards of each player. To do so, we must distinguish two cases: Suppose that each player $i$ follows the recommendation $\lambda^{i}$. Then, players' state dynamics are given by \eqref{N_player:dynamics}, and each player gets the reward $\J^N(\lambda^{i},\boldsymbol{\lambda}^{-i})$, with $\J^N$ given by \eqref{N_player:payoff_funtional}.
Suppose player $i$ deviates, while other players stick to the correlated strategy profile $\boldsymbol{\lambda}^{-i}$.
The deviating player will pick instead an open loop strategy $\nu\in\A_N$.
Her dynamics are given by \eqref{N_player:dynamics} and her reward is given by $\J^N(\nu^i,\boldsymbol{\lambda}^{-i})$, with $\J^N$ given by \eqref{N_player:payoff_funtional}.

We interpret deviations in the following way: Each player must decide whether to commit to moderator's lottery before the extraction happens, only by relying on the information given by the law of the correlated strategy $(Z,\boldsymbol{\lambda})$, which is assumed to be common knowledge between the players.
If a player does not commit, she will pick a strategy without any information on the outcome of the extraction.
Notice that, since the deviating player has only knowledge of the law of the correlated strategy profile  $(Z,\boldsymbol{\lambda})$, she will use a strategy $\nu \in \A_N$, which is, in particular, independent of the correlation device $Z$; consequently, her state process is independent of $Z$.
This results in information asymmetry between the committing player and the deviating player, and models the fact that if a player deviates, she will do so without any knowledge about the outcome of moderator's lottery: she will not exploit any of the additional information the mediator would give away when communicating the recommended strategies to the players.
Finally, even if a deviation $\beta$ is independent of the correlation device $Z$, it is correlated to the strategy profile $\lambda$, as they both are dependent of the noises and initial data $(\xi^j,W^j)_{j=1}^N$.

We are now ready to define coarse correlated equilibria of the ergodic $N$-player game:
\begin{definition}[$\varepsilon$-coarse correlated equilibrium within the set of strategies $\mathcal{B}$]\label{def:epsilon_CCE}
Let $\varepsilon \geq 0$, $\mathcal{B} \subseteq \A_N$.
A correlated strategy profile $(Z,\boldsymbol{\lambda})$ is an $\varepsilon$-coarse correlated equilibrium ($\varepsilon$-CCE) of the ergodic $N$-player game within the set of strategies $\mathcal{B}$, if for any $i=1,\dots, N$, we have
\[
    \J^N(\lambda^{i},\boldsymbol{\lambda}^{-i}) \geq \J^N(\nu,\boldsymbol{\lambda}^{-i})-\varepsilon,\quad \forall \, \nu \in \mathcal{B}.
\]
If $\varepsilon = 0$, we say that the correlated strategy profile $(Z,\boldsymbol{\lambda})$ is a coarse correlated equilibrium (CCE) of the ergodic $N$-player game within the set of strategies $\mathcal{B}$.
\end{definition}

\medskip
We observe that the definition of $\eps$-coarse correlated equilibria for the $N$-player game extends the one of Nash equilibria, that we recall:
\begin{definition}[$\varepsilon$-Nash equilibrium within the set of strategies $\mathcal{B}$]\label{def:epsilon_Nash}
Let $\varepsilon \geq 0$, $\mathcal{B} \subseteq \A_N$.
A strategy profile $\boldsymbol{\nu}^{*}=(\nu^{1,{*}},\dots,\nu^{N,{*}}) \in {\mathcal{B}}^N$ is an $\varepsilon$-Nash equilibrium ($\varepsilon$-NE) of the ergodic $N$-player game within the set of strategies $\mathcal{B}$, if for any $i=1,\dots, N$, we have
\[ 
    \J^N(\nu^{i,*},\boldsymbol{\nu}^{-i,*}) \geq \J^N(\nu,\boldsymbol{\nu}^{-i,*})-\varepsilon,\quad \forall \, \nu \in \mathcal{B}.
\]
If $\varepsilon = 0$, we say that the strategy profile $\boldsymbol{\nu}^{*}$ is a Nash equilibrium (NE) of the ergodic $N$-player game within the set of strategies $\mathcal{B}$.
\end{definition}

Every $\eps$-CCE $(Z,\boldsymbol{\lambda})$ with deterministic correlation device $Z$ is an $\eps$-NE: It is enough to notice that, since $Z$ is deterministic, the correlated strategy profile $\boldsymbol{\lambda}$ reduces to an open-loop strategy profile $(\nu^1,\dots,\nu^N)$ in $\A_N^N$.
Conversely, any $\eps$-NE induces an $\eps$-CCE with deterministic correlation device.

\section{The Ergodic Mean-field Game}\label{sec:mfg}
In order to determine $\eps$-optimal solutions to the central planner problem and $\varepsilon$-equilibria in the competitive setting, we consider the mean-field counterparts of the optimization problem and game considered before.
We will then show in Sections \ref{sec:cooperative} and \ref{sec:competitive} the relation between mean-field solutions to the $N$-player cooperative and competitive problems respectively.

\smallskip
We work on the same probability space $(\Omega,\F,\mathbb{F},\prob)$ defined in the previous section.
Given a strategy $\nu \in \A$, we consider the following dynamics:
\begin{equation}\label{mf:dynamics}
    dX^{\nu}_t = -\delta X^{\nu}_tdt + \sigma X^{\nu}_t dW_t +d\nu_t, \quad X_{0^{-}}=\xi.
\end{equation}
Let $\theta$ be a $\F_{0^{-}}$-measurable non-negative random variable $\theta$.
We consider the following payoff functional, to be maximized:
\begin{equation}\label{mfg:payoff}
    \J(\nu,\theta) = \liminf_{T \uparrow \infty}\frac{1}{T}\E\left[ \int_0^T (X^{\nu}_t)^\alpha \theta^{\beta}dt -q \nu_T \right].
\end{equation}
In the following, $\theta$ will either be a positive scalar, i.e. a degenerate random variable, when dealing with the MFC problem and with mean-field NEs (see Definitions \ref{mfc:def:optimal_ctrl} and \ref{mf:def:NE}), or a proper random variable, when dealing with mean-field CCEs (see Definitions \ref{def:correlated_strategy} and \ref{def:CCE}), given by the average of the (possibly random) stationary measure of processes obeying \eqref{mf:dynamics}.

Let  $\mathbb{F}^{\xi,W} = (\F^{\xi,W}_t)_{t \geq 0^{-}}$ be the $\prob$-augmentation of the filtration generated by $\xi$ and $W$.
Analogously to Definition \ref{N_player:admissible_strategies:def}, we consider the following strategies:
\begin{definition}[Open-loop strategy for the ergodic MFG]\label{cce:admissible_strategies}
A process $\nu \in \A$ is an open-loop strategy if it is progressively measurable with respect to the filtration $\mathbb{F}^{\xi,W}$.
We denote the set of open-loop strategies by $\A_{mf}$.
\end{definition}

\subsection*{Cooperative framework}
We address the mean-field counterpart of the central planner's maximization problem, which is given by the mean-field control (MFC) maximization problem.
Roughly speaking, this problem consists in maximizing the reward \eqref{mfg:payoff} under the additional constraint $\theta=\lim_{t \to \infty}\E[X^{\nu}_t] =: \E[X^\nu_\infty]$, for every control choice $\nu$.

\smallskip
In order to properly define the reward, we need to restrict the class of admissible controls.
\begin{definition}
We say that a strategy $\nu$ is admissible for the mean-field control problem if $\nu \in \A_{mf}$ and the process $(X^{\nu}_t)_{t \geq 0^{-}}$ admits a unique stationary distribution $p^{\nu}_\infty \in \mathcal{P}(\R_+)$ with finite first moment, i.e. $\int_{\R_+} xp^\nu_\infty(dx) < \infty$.
We denote the set of admissible strategies for the stationary MFC problem by $\A_{MFC}$.
\end{definition}

We notice that the set $\A_{MFC}$ is not empty: As it will be evident by subsequent Lemma \ref{geometric_bm:reflected:lemma}, for any $a > 0$, the policy $\nu^a$ that reflects the process $X^{\nu^a}$ upwards à la Skorohod at the level $a$ is admissible for the MFC problem.

\smallskip
For any $\nu \in \A_{MFC}$, denote by $\E[X^\nu_\infty]$ the first moment of the corresponding limit measure $p^\nu_\infty$.
The payoff functional associated to a strategy $\nu \in \A_{MFC}$ is given by
\[
\J(\nu,\E[X^{\nu}_\infty]) = \liminf_{T \uparrow \infty}\frac{1}{T}\E\left[ \int_0^T (X^{\nu}_t)^\alpha (\E[X^\nu_\infty])^{\beta}dt - q \nu_T \right].
\]

\begin{definition}\label{mfc:def:optimal_ctrl}
An admissible control $\hat{\nu} \in \A_{MFC}$ is an optimal control for the mean-field control problem if 
\[
\J(\hat{\nu},\E[X^{\hat{\nu}}_\infty]) \geq \J(\nu,\E[X^{\nu}_\infty]), \quad \forall \, \nu \in \A_{MFC}.
\]
\end{definition}

The study of the central planner's optimization problem and its relation with the $N$-agent system is the content of Section \ref{sec:cooperative}: We show in Theorem \ref{mfc:thm:optimal_control} that it is possible to completely characterize solutions of the MFC problem.
Then, in Theorem \ref{central_planner:thm:approximation}, we use the solution of the MFC problem to build a sequence of approximate central planner's optima in the underlying $N$-agent system, with vanishing approximating error.

\subsection*{Competitive framework}
We consider the mean field analogues of CCEs and NEs in the $N$-player game.
We first define the mean-field analogue of correlated stationary profiles:
\begin{definition}[Correlated stationary strategy]\label{def:correlated_strategy}
We define a correlated stationary strategy as a triple $(Z,\theta_\infty,\lambda)$ so that the following holds:
\begin{enumerate}[label=(\roman*)]
    \item $Z$ is a correlation device;
    \item $\theta_\infty$ is a $\sigma(Z)$-measurable non-negative random variable;
    \item $\lambda=(\lambda_t)_{t \geq 0^{-}}$ belongs to $\A$ and it is progressively measurable with respect to the $\prob$-augmentation of the filtration $(\sigma(Z)\vee\F_t^{\xi,W})_{t \geq 0^{-}}$.
\end{enumerate}
\end{definition}
In the following, we will denote the law of $\theta_\infty$ by $\rho \in \mathcal P (\R_+)$.
We interpret a correlated stationary strategy $(Z,\lambda,\theta_\infty)$ as follows: Before the game starts, the moderator picks a correlation device and uses it to generate a recommendation the representative player $\lambda$ and a random stationary mean $\theta_\infty$.
The stationary mean is expected to become stochastic, as a result of moderator's lottery for generating representative player's recommendation, and only as a result of it.
For this reason, we request $\theta_\infty$ to be measurable with respect to $\sigma(Z)$: in this sense, the correlation device picks both the representative player's strategy and the stationary mean $\theta_\infty$, as the latter captures the behavior of the rest of the population.
Finally, we notice that, even if we are considering the long-time averages, moderator's lottery has the important impact of keeping the stationary mean $\theta_\infty$ stochastic.

\smallskip
We now assign dynamics and payoff functional.
We distinguish the following two cases:
If the representative player decides to trust the mediator and so to follow her recommendation $\lambda$, then the dynamics is given by \eqref{mf:dynamics} with $\lambda$ instead of $\nu$ and the payoff is given by $\J(\lambda,\theta_\infty)$, with $\J$ defined by \eqref{mfg:payoff}.
If instead the representative player chooses to deviate, she uses a strategy $\nu \in \A_{mf}$, her dynamics is given by \eqref{mf:dynamics}, and she gets the reward $\J(\nu,\theta_\infty)$.
Observe that, when the representative player deviates, her strategy $\nu$ is $\mathbb F^{\xi,W}$-progressively measurable and therefore independent of $\theta_\infty$, since she has no information on the outcome of moderator's lottery.
As in the $N$-player game, the deviating player can only use her knowledge of the law of the correlated stationary strategy $(Z,\lambda,\theta_\infty)$, which is assumed to be publicly known.
Nevertheless, $\theta_\infty$ still appears in her payoff.

\begin{definition}[Coarse correlated Equilibrium for the ergodic MFG]\label{def:CCE}
A correlated stationary triple $(Z,\lambda,\theta_\infty)$ is a coarse correlated equilibrium (CCE) for the ergodic MFG if the following holds:
\begin{enumerate}[label=(\arabic*)]
    \item\label{cce:def:optimality} $\J(\lambda,\theta_\infty) \geq \J(\nu,\theta_\infty)$ for any $\nu \in \A_{mf}$,
    \item \label{cce:def:consistency} The process $X^{\lambda}$ admits a stationary distribution and it holds
    \begin{equation}\label{cce:cons_eq}
        \theta_\infty = \int_{\R_+} x p_\infty(dx,\theta_\infty),
    \end{equation}
    where $p_\infty$ is the stochastic kernel so that $\mu_\infty(dx,d\theta)=p_\infty(dx,\theta)\rho(d\theta)$ with $\rho = \prob\circ \theta_\infty^{-1}$ and $\mu_\infty = \lim_{t \to \infty}\prob\circ(X^{\lambda}_t,\theta_\infty)^{-1}$ in the weak sense.
\end{enumerate}
\end{definition}
We will refer to CCEs for the ergodic MFG as mean-field CCEs as well.

\begin{remark}
Property \ref{cce:def:consistency} in Definition \ref{def:CCE} is equivalent to 
\begin{equation}\label{cce_cons:conditional_form}
\theta_\infty \sim w-\lim_{t \to \infty }\E[X^{\lambda}_t \vert \theta_\infty],
\end{equation}
where $w$ denotes the weak convergence of random variables.
\end{remark}

The consistency condition \ref{cce:def:consistency} in Definition \ref{def:CCE} should be read in the following way: the mediator imagines what the stationary mean $\theta_\infty$ will be, before the game starts, and gives a recommendation to each player according to her idea.
If all players commit to the mediator's lottery for generating recommendations, then the long-time average should be consistent with what imagined by the mediator.

\medskip
The notion of CCE for the ergodic MFG extends the notion of notion of Nash equilibrium for the ergodic MFG, that we borrow from \cite{cao2023stationary}:
\begin{definition}[Nash equilibrium of the ergodic MFG]\label{mf:def:NE} 
A pair $(\nu^{*}, \theta^*) \in \A_{mf} \times \mathbb{R}_+$ is said to be a Nash equilibrium of the ergodic MFG if 
\begin{enumerate}
    \item\label{mfg:def:optimality} $\J(\nu^{*}, \theta^* ) \geq \J(\nu, \theta^* ) $, for any $\nu \in \A_{mf}$;
    \item\label{mfg:def:consistency} The optimally controlled process $X^{\nu^{*}}$ admits a limiting distribution $p^*_{\infty} \in \mathcal{P}(\R_+)$ satisfying
    \[
    \theta^* = \int_{\mathbb{R}_+} x p^*_\infty(dx).
    \]
    \end{enumerate}
\end{definition}
We will refer to NEs for the ergodic MFG as mean-field NEs as well.
We stress that, differently from mean-field CCEs, when looking for Nash equilibria, $\theta^*$ is assumed to be deterministic.

\smallskip
As in the $N$-player game, and actually by exactly the same reasoning, every CCE for the ergodic MFG $(Z,\theta_\infty,\lambda)$ with deterministic correlation device $Z$ is an Nash equilibrium for the ergodic MFG as well.
Conversely, any Nash equilibrium for the ergodic MFG $(\nu^*,\theta^*)$ induces a mean-field CCE with deterministic correlation device.

\medskip
The study of the existence of CCEs in the ergodic MFG and its relation with the $N$-player game is the content of Section \ref{sec:competitive}: We identify specific classes of correlated stationary strategies for which it is possible to state a sufficient condition to be a CCE.
Then we use CCEs in those classes to build a sequence of approximate CCEs the underlying $N$-player system, with vanishing approximating error.
As a byproduct, we get the same approximation result for NEs as well.

\section{Assumptions and Preliminary Results}\label{sec:assumptions}

On top of Assumption \ref{assumption_F_0}, we assume the following structural condition on the coefficients of the SDE:
\begin{customassumption}{\textbf{D}}\label{assumption:dissipativity}
The parameters $\delta$ and $\sigma$ satisfy the following condition:
\[
    2\delta - \sigma^2 > 0.
\]
Moreover, $\mu_0$ admits a finite second moment. 
\end{customassumption}

\begin{remark}
Let $X^0$ be the solution of \eqref{mf:dynamics} when the policy $\nu$ is identically equal to $0$.
Assumption \ref{assumption:dissipativity} is a dissipativity assumption on the square of $X^0$: indeed, by It\^{o}'s formula, we have
\[
(X^0_t)^2 = \xi^2 - (2\delta - \sigma^2)\int_0^t (X^0_s)^2ds + 2\sigma \int_0^t(X^0_s)^2 dW_s,
\]
which is then square-integrable and dissipative. Notice that the same assumption is assumed in \cite[Section 6]{cao2023stationary}.
\end{remark}

Let $(X^{\nu^a},\nu^a)$ be a solution to the Skorohod reflection problem at the barrier $a$, i.e. a pair of processes so that $\nu^a \in \A$, equation \eqref{mf:dynamics} holds true, $X^{\nu^a}_t \geq a, \, \forall \, t \geq 0$ $\prob$-a.s. and $\int_0^\infty \1_{ \{ X^{\nu^a}_s > a \} }d\nu^a_s = 0$, $\prob$-a.s. (see \cite[Section 1.2]{pilipenko2014introduction}, and in particular Definition 1.2.1 therein).
We refer to $X^{\nu^a}$ also as the diffusion reflected upward at the level $a$.
We state some important properties of $(X^{\nu^a},\nu^a)$, which will be used through the whole manuscript.

\begin{lemma}\label{geometric_bm:reflected:lemma}
\begin{enumerate}[wide,label=\roman*)]
\item \label{geometric_bm:reflected:lemma:finite_moments} For any $a > 0$, let $p_a \in \mathcal{P}(\R_+)$ be given by
\begin{equation}\label{gemetric_bm:reflected:density}
   p_a(dx) = \frac{2\delta + \sigma^2}{\sigma^2}  a^{\frac{2\delta}{\sigma^2} + 1}x^{-\frac{2\delta}{\sigma^2} -2}\1_{\{ x \geq a\} } dx.
\end{equation}
For any $0 \leq k \leq 2$, the measure $p_a$ admits finite $k$-moment.
In particular, it holds
\begin{equation}\label{geometric_bm:reflected:barrier_mean}
    \int_{\R_+} x p_a(dx) = \frac{2\delta + \sigma^2}{2\delta}a.
\end{equation}
Moreover, the map $\R_+ \times \mathcal{B}_{\R_+} \ni (a,B) \mapsto p_a(B)$ defines a stochastic kernel from $\R_+$ to $\mathcal{B}_{\R_+}$.

\item \label{geometric_bm:reflected:lemma:reflection} For any $a > 0$, there exists a unique strong solution $(X^{\nu^a}_t,\nu^a_t)_{t \geq 0^{-}}$ of the Skorohod reflection problem at the barrier $a$.
The process $\nu^a$ is given by
\[
\nu^a_t = \int_0^t X^0_s \: d \left( \sup_{0 \leq u \leq s}\left( \frac{a-X^0_u}{X^0_u}\right)^+\right), \quad \nu^a_{0^{-}} = 0.
\]
Moreover, the process $X^{\nu^a}$ is positively recurrent with stationary measure given by $p_a$.

\item \label{geometric_bm:reflected:lemma:integrability} There exists a positive constant $c$ so that
\begin{equation*}
    \sup_{t \geq 0} \E[(X^{\nu^a}_t)^2]  \leq c (1 + a^2), \quad   \sup_{T > 0} \E\left[\left( \frac{1}{T}\nu^a_T \right)^2\right]  \leq c(1 + a^2).
\end{equation*}
\end{enumerate}
\end{lemma}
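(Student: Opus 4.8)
The plan is to treat the three claims sequentially, building each on the explicit form of $X^{\nu^a}$ as a geometric-type diffusion reflected upward at the barrier $a$. First, for part \ref{geometric_bm:reflected:lemma:finite_moments}, I would verify directly that $p_a$ in \eqref{gemetric_bm:reflected:density} is a probability density: the normalizing constant follows from computing $\int_a^\infty x^{-2\delta/\sigma^2 - 2}\,dx = \frac{\sigma^2}{2\delta + \sigma^2}a^{-2\delta/\sigma^2 - 1}$, which requires the tail exponent $2\delta/\sigma^2 + 2 > 1$, automatic under Assumption \ref{assumption:dissipativity}. For the $k$-th moment with $0 \leq k \leq 2$, the integrand behaves like $x^{k - 2\delta/\sigma^2 - 2}$ near infinity, so integrability holds iff $k < 2\delta/\sigma^2 + 1$; since $2\delta - \sigma^2 > 0$ gives $2\delta/\sigma^2 + 1 > 2$, all moments up to $k = 2$ are finite. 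The explicit mean \eqref{geometric_bm:reflected:barrier_mean} is then a one-line integral, and the stochastic-kernel claim follows because $(a,B)\mapsto p_a(B)$ is jointly measurable and $a\mapsto p_a(B)$ is measurable for each fixed Borel $B$ by construction.

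For part \ref{geometric_bm:reflected:lemma:reflection}, I would invoke the general theory of Skorohod reflection for SDEs (e.g. \cite[Section 1.2]{pilipenko2014introduction}) to get existence and uniqueness of the reflected pair, then verify the proposed formula for $\nu^a$ by substituting the representation $X^{\nu^a}_t = X^0_t(\xi + \int_0^t (X^0_s)^{-1}d\nu^a_s)$ derived in the text. The cleanest route is to set $Y_t := X^{\nu^a}_t / X^0_t$, observe that $Y$ is a nondecreasing process reflecting the level $a/X^0$ from above, and identify $dY_t = d\big(\sup_{u\le t}(a - X^0_u)^+/X^0_u\big)$ via the classical Skorohod map formula in one dimension; multiplying back by $X^0_s$ recovers the stated $\nu^a$. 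Positive recurrence and the identification of $p_a$ as the stationary measure I would obtain by checking that $p_a$ solves the stationary Fokker--Planck (forward Kolmogorov) equation for the reflected diffusion with the zero-flux boundary condition at $a$, i.e. that the generator's adjoint annihilates $p_a$ on $(a,\infty)$ and the probability flux vanishes at the reflecting barrier.

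For part \ref{geometric_bm:reflected:lemma:integrability}, the uniform second-moment bound on $X^{\nu^a}_t$ I would derive by applying It\^o's formula to $(X^{\nu^a}_t)^2$, using the dissipativity from Assumption \ref{assumption:dissipativity} to control the drift: the $-(2\delta - \sigma^2)(X^{\nu^a}_t)^2$ term dominates, and the contribution of $d\nu^a$ to $(X^{\nu^a})^2$ is controlled because increments of $\nu^a$ occur only at the barrier level $a$, yielding a Gronwall-type estimate with constant scaling like $1 + a^2$; the uniformity in $t$ comes from the negative drift coefficient. The bound on $\E[(\nu^a_T/T)^2]$ I would extract from the dynamics \eqref{mf:dynamics} rearranged as $\nu^a_T = X^{\nu^a}_T - \xi + \delta\int_0^T X^{\nu^a}_s\,ds - \sigma\int_0^T X^{\nu^a}_s\,dW_s$: squaring, using the inequality $(\sum a_i)^2 \leq 4\sum a_i^2$, the Cauchy--Schwarz and It\^o isometry bounds on the integral terms, and the uniform second-moment estimate just obtained gives a bound of order $T^2(1+a^2)$, which after dividing by $T^2$ yields the claim.

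I expect the main obstacle to be part \ref{geometric_bm:reflected:lemma:reflection}, specifically the rigorous identification of the stationary measure together with positive recurrence, rather than the moment bounds. The delicate point is establishing that the local-time-type singular control $\nu^a$ interacts correctly with the multiplicative geometric noise, so that the change of variables $Y = X^{\nu^a}/X^0$ genuinely reduces the reflection to a tractable Skorohod map; once that reduction is clean, verifying the explicit density $p_a$ through the stationary Fokker--Planck equation is routine but must be done carefully to handle the boundary flux condition at $a$ and to justify positive recurrence (e.g. via finiteness of expected return times or a Lyapunov/Foster criterion using the dissipativity).
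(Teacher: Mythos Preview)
Your plan for part~\ref{geometric_bm:reflected:lemma:finite_moments} and for the $\nu^a_T/T$ bound in part~\ref{geometric_bm:reflected:lemma:integrability} coincides with the paper's proof. For part~\ref{geometric_bm:reflected:lemma:reflection} you take a different but equally valid route: you propose to verify the stationary density via the Fokker--Planck equation with reflecting boundary condition, whereas the paper simply computes the speed measure $m'(x)=\tfrac{2}{\sigma^2}x^{-2\delta/\sigma^2}\1_{[a,\infty)}$, observes it is integrable, and invokes \cite[Paragraph 36]{borodin2002handbook} to conclude positive recurrence with invariant density $m'/m([a,\infty))=p_a$. The speed-measure argument is shorter and gives positive recurrence for free; your Fokker--Planck verification is more explicit but you would still need a separate argument (Lyapunov or return-time) for recurrence.

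For the uniform second-moment bound in part~\ref{geometric_bm:reflected:lemma:integrability} there is a gap in your sketch. Applying It\^o to $(X^{\nu^a}_t)^2$ produces the term $2X^{\nu^a}_t\,d\nu^a_t=2a\,d\nu^a_t$, which is nonnegative and not a priori bounded: you would need $\sup_t\E[\nu^a_t]<\infty$ to close the Gronwall loop, but that is exactly part of what you are trying to prove, so the argument is circular as stated. The clean fix within your approach is to apply It\^o to $(X^{\nu^a}_t-a)^2$ instead; then the singular term $(X^{\nu^a}_t-a)\,d\nu^a_t$ vanishes identically because $d\nu^a$ is supported on $\{X^{\nu^a}=a\}$, and the drift $-(2\delta-\sigma^2)(X^{\nu^a}_t)^2+2\delta a X^{\nu^a}_t$ is controlled by Young's inequality to yield a genuine Gronwall estimate for $\E[(X^{\nu^a}_t-a)^2]$, from which the bound on $\E[(X^{\nu^a}_t)^2]$ follows. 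The paper takes a rather different path here: it fixes a level $L=1+a$, decomposes $\{X^{\nu^a}\ge L\}$ into excursion intervals $[\tau_k,\bar\tau_k]$ on which the control does not act, and applies It\^o to $(X^{\nu^a})^2-L^2$ on each excursion separately, exploiting that the process starts each excursion at level $L$. Your corrected It\^o/Gronwall argument is arguably more direct than the paper's excursion decomposition, once the right test function is chosen.
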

The proof is postponed to the \nameref{sec:appendix}.
For later use, we introduce the real function $C(a,p)$, for $(a,p) \in \R^2_+$, given by 
\begin{equation}\label{alvarez:ergodic_functional}
\begin{aligned}
    C(a,p) = & (2\delta + \sigma^2)\left( \frac{p}{2\delta + \sigma^2(1-\alpha) }a^{\alpha} - \frac{q}{2}a  \right).
\end{aligned}
\end{equation}
Let $\nu^a$ is the policy that reflects the process $X^{\nu^a}$ solution to \eqref{mf:dynamics} upwards à la Skorohod at the level $a > 0$.
By \cite[Lemma 2.1]{alvarez2018stationary},
it holds
\[
    C(a,p) = \liminf_{T \uparrow \infty}\frac{1}{T}\E\left[ \int_0^T p\cdot(X^{\nu^a}_t)^\alpha dt - q \nu^a_T \right] = \lim_{T \uparrow \infty}\frac{1}{T}\E\left[ \int_0^T p\cdot(X^{\nu^a}_t)^\alpha dt - q \nu^a_T \right].
\]

In the following, we will need to solve several ergodic optimization problems of singular controls.
The existence of a unique solution to such problems is ensured by following Lemma:
\begin{lemma}\label{lemma:ergodic_control}
Let $\lambda \in \R$ so that $q \delta - \lambda > 0$ and $p > 0$.
Define the following function
\begin{equation}\label{control:auxiliary:running_cost}
    g(x,p,\lambda) := x^\alpha p + \lambda x,
\end{equation}
and consider the reward functional
\begin{equation}\label{lemma:ergodic_control:payoff}
    \Tilde{\J}(\nu,p,\lambda) := \liminf_{T \uparrow \infty} \frac{1}{T}\E\left[ \int_0^T g(X^{\nu}_t,p,\lambda) dt - q \nu_T \right],
\end{equation}
where $X^{\nu}=(X^{\nu}_t)_{t \geq 0^{-}}$ evolves accordingly to \eqref{mf:dynamics}.
Then, there exists an optimal control $\nu^* \in \A$, given by the policy which reflects the state process upwards à la Skorohod at the barrier $ a^*(p,\lambda)$, where
\begin{equation}\label{lemma:ergodic_control:barrier}
    a^*(p,\lambda) = \left( \frac{2\alpha\delta }{2\delta + \sigma^2(1-\alpha)} \frac{p}{q\delta - \lambda} \right)^{\frac{1}{1-\alpha}}.
\end{equation}
\end{lemma}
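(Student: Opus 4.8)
The plan is to first pin down the optimal reflection barrier by restricting attention to the reflecting policies $\nu^a$ of Lemma \ref{geometric_bm:reflected:lemma}, computing the ergodic reward $\Tilde{\J}(\nu^a,p,\lambda)$ in closed form and maximising it over $a>0$; and then to prove by a verification argument that the resulting policy $\nu^{a^*}$ is optimal over the whole class $\A$.

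Since the reward \eqref{lemma:ergodic_control:payoff} is linear in the running cost $g(x,p,\lambda)=px^\alpha+\lambda x$, I would split $\Tilde{\J}(\nu^a,p,\lambda)$ into the contribution of the $px^\alpha$--term and that of the linear term $\lambda x$. The former equals $C(a,p)$ of \eqref{alvarez:ergodic_functional} by \cite[Lemma 2.1]{alvarez2018stationary}, this being a genuine limit. For the latter, positive recurrence of $X^{\nu^a}$ towards its stationary law $p_a$ together with finiteness of its first moment (Lemma \ref{geometric_bm:reflected:lemma}) give $\lim_{T}\frac{1}{T}\E[\int_0^T X^{\nu^a}_t\,dt]=\int_{\R_+}x\,p_a(dx)=\frac{2\delta+\sigma^2}{2\delta}a$ by \eqref{geometric_bm:reflected:barrier_mean}. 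As both are genuine limits, the $\liminf$ of the sum equals their sum, so that
\[
\Tilde{\J}(\nu^a,p,\lambda)=(2\delta+\sigma^2)\left[\frac{p}{2\delta+\sigma^2(1-\alpha)}a^\alpha-\frac{q}{2}a+\frac{\lambda}{2\delta}a\right].
\]
Since $q\delta-\lambda>0$ the linear coefficient is strictly negative while $a\mapsto a^\alpha$ is strictly concave, so the bracket is strictly concave in $a$ and admits a unique interior maximiser. Setting the derivative to zero and solving $\frac{\alpha p}{2\delta+\sigma^2(1-\alpha)}a^{\alpha-1}=\frac{q\delta-\lambda}{2\delta}$ returns precisely $a^*(p,\lambda)$ of \eqref{lemma:ergodic_control:barrier}.

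The remaining and principal step is global optimality of $\nu^{a^*}$. For this I would exhibit a classical pair $(w,\ell)$ solving the HJB variational inequality
\[
\max\left\{\mathcal{L}w(x)+g(x,p,\lambda)-\ell,\;\,w'(x)-q\right\}=0,\qquad \mathcal{L}w:=-\delta x\,w'+\tfrac12\sigma^2 x^2 w'',
\]
with $\ell:=\Tilde{\J}(\nu^{a^*},p,\lambda)$. On $(a^*,\infty)$ the equation $\mathcal{L}w+g=\ell$ is linear of Euler type; I would take the particular solution spanned by $x^\alpha$, $x$ and $\ln x$, discarding the homogeneous mode $x^{2\delta/\sigma^2+1}$ (which grows too fast to be integrable, using $2\delta-\sigma^2>0$ from Assumption \ref{assumption:dissipativity}), and extend $w$ below $a^*$ with constant slope $q$. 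The smooth-fit condition $w'(a^*)=q$ is consistent with the barrier found above, and concavity of $w$ delivers $w'\le q$ on $(a^*,\infty)$ together with $\mathcal{L}w+g\le\ell$ on $(0,a^*]$. Applying It\^o--Dynkin to $w(X^\nu_t)$, invoking these two inequalities, dividing by $T$ and passing to the $\liminf$ then yields $\Tilde{\J}(\nu,p,\lambda)\le\ell$ for every $\nu\in\A$, with equality along $\nu^{a^*}$ since both inequalities are saturated on the reflected trajectory.

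I expect the crux to lie entirely in this verification step: namely, in ensuring that the ergodic boundary contribution $\frac{1}{T}\E[w(X^\nu_T)]$ and the stochastic-integral term vanish in the limit, which is where the second-moment and normalised-control bounds of Lemma \ref{geometric_bm:reflected:lemma} become essential, and in handling the singular term $-q\nu_T$ jointly with the $\liminf$ throughout the inequality chain.
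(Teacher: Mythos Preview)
Your proposal is correct and follows a genuinely different route from the paper. The paper does not first optimise over reflecting policies; instead it introduces an auxiliary \emph{optimal stopping} problem for the marginal cost,
\[
u(x,p,\lambda)=\inf_{\tau}\E_x\Big[\int_0^\tau e^{-\delta t}g_x(\hat X_t,p,\lambda)\,dt + q e^{-\delta\tau}\Big],
\]
with $\hat X$ a tilted geometric Brownian motion. The free boundary of this stopping problem is shown to coincide with $a^*(p,\lambda)$ via an integral equation in the fundamental decreasing solution and speed measure, and the ergodic value function is then obtained by integrating $u$: $v(x)=\int_{a^*}^x u\,dx'$ with $\gamma^*=g(a^*)+q\delta a^*$. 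Smooth fit and the HJB structure come for free from the regularity of $u$, and verification is outsourced to \cite[Theorem 3.2]{jack_zervos2006ergodic}.

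By contrast, you compute $\Tilde{\J}(\nu^a,p,\lambda)$ explicitly from ergodicity and \eqref{alvarez:ergodic_functional}, optimise over $a$ by elementary calculus, and then build the HJB solution by solving the Euler equation directly. This is more self-contained and makes the determination of $a^*$ transparent, at the cost of having to check smooth fit, concavity of $w$, and the two inequalities of the variational problem by hand, and of carrying out the It\^o--Dynkin verification yourself rather than citing a ready-made theorem. The paper's stopping-problem route is more systematic (the singular-control/optimal-stopping connection delivers the correct $C^1$ candidate automatically) and avoids the explicit Euler-mode bookkeeping. One small remark: your appeal to Assumption~\ref{assumption:dissipativity} to discard the growing homogeneous mode is not strictly needed for the statement, since the exponent $1+2\delta/\sigma^2$ exceeds $1$ regardless, and the paper's Remark~\ref{mfc:rmk_assumptions} indicates the lemma should hold without it; but this does not affect the validity of your argument.
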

The proof is postponed to the \nameref{sec:appendix}.
The extra term $\lambda x$ in the definition of $g(x,p,\lambda)$ comes from  the Lagrange multiplier approach that we use to solve the MFC problem (see Theorem \ref{mfc:thm:optimal_control}), where $\lambda$ is the Lagrange multiplier itself.
$p$ will either be $\E[\theta_\infty^\beta]$ (see Proposition \ref{dev_player:prop:optimal_deviation}) or a constant $\theta^\beta$ (see again Theorem \ref{mfc:thm:optimal_control} and Proposition \ref{mfg:thm:nash_eq}).

\medskip
With respect to a smaller class of strategies, we state a first order optimality condition for a control $\hat{\nu}$, inspired by \cite{federico2021infinite} (see also \cite{bank_riedel2001AAP}).
Although only the necessary part will be needed, for the sake of completeness, we also show that it is sufficient under additional assumptions.

\begin{lemma}[First order optimality condition]\label{lemma:first_order_condition}
Let $p > 0$, $\lambda > q \delta$.
Let $1 \leq q, q' \leq \infty$ be Young conjugates, and define the set $\A_{2q} $ as the set of controls $\nu \in \A$ so that
\begin{equation}\label{lemma:first_order_condition:admissible_controls}
\sup_{T > 0}\frac{1}{T}\E\left[ \int_0^T \vert X^{\nu}_t \vert^{2q} dt  \right] < \infty.
\end{equation}
Let $\hat{\nu} \in \A_{2q}$ so that 
\begin{equation}\label{lemma:first_order_condition:integrability_condition}
\sup_{T > 0}\frac{1}{T}\E\left[\int_0^T \vert (X^{\hat{\nu}}_t)^{\alpha - 2} \vert^{q'} \right]<\infty,
\end{equation}
if $q'<\infty$, and so that
\begin{equation}\label{lemma:first_order_condition:integrability_condition_esssup}
\sup_{T > 0} \inf \left\{C \geq 0: \: (X^{\hat{\nu}}_t)^{\alpha - 2} \leq C \; \text{for $dt \otimes d\prob$-a.e. $(t,\omega) \in [0,T] \times \Omega$} \right\}  < \infty,
\end{equation}
if $q' = \infty$.
\begin{enumerate}[label=(\alph*)]
    \item\label{lemma:first_order_condition:necessary} Suppose that $\hat{\nu}$ is optimal within the set $\A_{2q}$ for the control problem with dynamics \eqref{mf:dynamics} and reward \eqref{lemma:ergodic_control:payoff}. Then, for every $\nu \in \A_{2q}$, it holds
    \begin{equation}\label{lemma:first_order_condition:ineq_limsup}
    \limsup_{T \uparrow \infty}\frac{1}{T} \E \left[ \int_0^T  g_x(X^{\hat{\nu}}_t,p,\lambda)( X^{\hat{\nu}}_t - X^{\nu}_t ) dt - q(\hat{\nu}_T - \nu_T) \right] \geq 0.
    \end{equation}
    \item\label{lemma:first_order_condition:sufficient} Suppose that either $\hat{\nu}$ satisfies \eqref{lemma:first_order_condition:ineq_limsup} and 
    \begin{equation}\label{lemma:first_order_condition:limit}
     \liminf_{T \uparrow \infty} \frac{1}{T}\E\left[ \int_0^T g(X^{\hat{\nu}}_t,p,\lambda) dt - q \hat{\nu}_T \right] = \lim_{T \uparrow \infty} \frac{1}{T}\E\left[ \int_0^T g(X^{\hat{\nu}}_t,p,\lambda) dt - q \hat{\nu}_T \right],
    \end{equation}
    or that $\hat{\nu}$ satisfies
    \begin{equation}\label{lemma:first_order_condition:ineq_liminf}
    \liminf_{T \uparrow \infty}\frac{1}{T} \E \left[ \int_0^T  g_x(X^{\hat{\nu}}_t,p,\lambda) ( X^{\hat{\nu}}_t - X^{\nu}_t ) dt - q(\hat{\nu}_T - \nu_T) \right] \geq 0.
    \end{equation}
    Then $\hat{\nu}$ is optimal within the set $\A_{2q}$.
\end{enumerate}
\end{lemma}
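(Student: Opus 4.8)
The plan is to exploit two structural features. First, the running reward $g(\cdot,p,\lambda)$ is concave in its first argument, since $\alpha\in(0,1)$ makes $x\mapsto p x^\alpha$ concave while $\lambda x$ is linear. Second, the map $\nu\mapsto X^\nu$ is affine: writing $X^0$ for the uncontrolled solution of \eqref{mf:dynamics} normalized by $X^0_{0^-}=1$, the variation-of-constants formula $X^\nu_t=X^0_t(\xi+\int_0^t (X^0_s)^{-1}d\nu_s)$ yields $X^{(1-\eps)\hat\nu+\eps\nu}=(1-\eps)X^{\hat\nu}+\eps X^\nu$ for every $\eps\in[0,1]$. Since $\A$ and $\A_{2q}$ are convex (convex combinations preserve monotonicity and, by convexity of $x\mapsto\abs{x}^{2q}$, the moment bound \eqref{lemma:first_order_condition:admissible_controls}), the perturbations $\nu^\eps:=(1-\eps)\hat\nu+\eps\nu$ remain admissible. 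Sufficiency is then essentially bookkeeping with $\liminf/\limsup$, while necessity requires a genuine perturbation argument.

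For the sufficiency part \ref{lemma:first_order_condition:sufficient}, I would first record the tangent inequality coming from concavity: for every $t$,
\[
g(X^{\hat\nu}_t,p,\lambda)-g(X^{\nu}_t,p,\lambda)\geq g_x(X^{\hat\nu}_t,p,\lambda)\,\big(X^{\hat\nu}_t-X^{\nu}_t\big).
\]
Averaging over $[0,T]$, taking expectations, and subtracting $q(\hat\nu_T-\nu_T)$ shows that the reward gap $\frac1T\E[\int_0^T(g(X^{\hat\nu}_t)-g(X^{\nu}_t))dt-q(\hat\nu_T-\nu_T)]$ dominates, for every $T$, the first-order quantity of \eqref{lemma:first_order_condition:ineq_limsup}–\eqref{lemma:first_order_condition:ineq_liminf}. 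If \eqref{lemma:first_order_condition:ineq_liminf} holds, superadditivity of $\liminf$ gives $\tilde{\J}(\hat\nu)\geq\tilde{\J}(\nu)$ at once; if instead \eqref{lemma:first_order_condition:ineq_limsup} holds together with the limit \eqref{lemma:first_order_condition:limit}, the same pointwise bound combined with the identity $\liminf_T(a_T-b_T)=\lim_T a_T-\limsup_T b_T$ (valid once $\lim_T a_T$ exists) yields optimality of $\hat\nu$.

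The heart of the lemma is the necessity part \ref{lemma:first_order_condition:necessary}. I set $D^\eps_T:=\frac1T\E[\int_0^T \eps^{-1}(g(X^{\nu^\eps}_t)-g(X^{\hat\nu}_t))dt-q(\nu_T-\hat\nu_T)]$ and let $D^0_T$ be the analogous quantity with the difference quotient replaced by its limit $g_x(X^{\hat\nu}_t)(X^{\nu}_t-X^{\hat\nu}_t)$; the target inequality \eqref{lemma:first_order_condition:ineq_limsup} is precisely $\liminf_T D^0_T\leq 0$. Affineness of $\eps\mapsto X^{\nu^\eps}$ gives $\frac1T\E[\int_0^T g(X^{\nu^\eps}_t)dt-q\nu^\eps_T]=\frac1T\E[\int_0^T g(X^{\hat\nu}_t)dt-q\hat\nu_T]+\eps D^\eps_T$, so optimality of $\hat\nu$ together with superadditivity of $\liminf$ (using finiteness of the optimal value) forces $\liminf_T D^\eps_T\leq 0$ for every $\eps\in(0,1]$.

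It remains to send $\eps\downarrow0$, and this is the main obstacle: concavity only gives $D^\eps_T\leq D^0_T$, so the bound on $\liminf_T D^\eps_T$ does not transfer to $D^0_T$ unless the gap is controlled uniformly in $T$. To close it I would expand $g$ to second order, $\eps^{-1}(g(X^{\nu^\eps}_t)-g(X^{\hat\nu}_t))=g_x(X^{\hat\nu}_t)(X^{\nu}_t-X^{\hat\nu}_t)+\tfrac{\eps}{2}g_{xx}(\zeta_t)(X^{\nu}_t-X^{\hat\nu}_t)^2$ with $\zeta_t$ between $X^{\hat\nu}_t$ and $X^{\nu^\eps}_t$, and use $g_{xx}(x)=p\alpha(\alpha-1)x^{\alpha-2}$ together with $\zeta_t\geq(1-\eps)X^{\hat\nu}_t$ to get $\abs{g_{xx}(\zeta_t)}\leq p\alpha(1-\alpha)(1-\eps)^{\alpha-2}(X^{\hat\nu}_t)^{\alpha-2}$. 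A Hölder estimate with the Young conjugates $q,q'$ then yields
\[
0\leq D^0_T-D^\eps_T\leq \eps\,C\,(1-\eps)^{\alpha-2},
\]
uniformly in $T$, where $C$ is finite precisely because of the admissibility \eqref{lemma:first_order_condition:admissible_controls} of $\hat\nu,\nu$ (controlling $\frac1T\E[\int_0^T\abs{X^{\nu}_t-X^{\hat\nu}_t}^{2q}dt]$) and the integrability condition \eqref{lemma:first_order_condition:integrability_condition} (controlling $\frac1T\E[\int_0^T (X^{\hat\nu}_t)^{(\alpha-2)q'}dt]$); the case $q'=\infty$ uses \eqref{lemma:first_order_condition:integrability_condition_esssup} with an $L^\infty$–$L^1$ split instead. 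Taking $\liminf_T$ gives $\liminf_T D^0_T\leq \eps\,C(1-\eps)^{\alpha-2}$, and letting $\eps\downarrow0$ produces $\liminf_T D^0_T\leq 0$, which is \eqref{lemma:first_order_condition:ineq_limsup}. The uniform-in-$T$ control of the quadratic remainder, powered by the two integrability assumptions, is the crux of the argument.
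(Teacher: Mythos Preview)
Your proof is correct and follows essentially the same route as the paper's: convex perturbation $\nu^\eps=(1-\eps)\hat\nu+\eps\nu$, affineness $X^{\nu^\eps}=(1-\eps)X^{\hat\nu}+\eps X^\nu$, a lower bound on the intermediate point by a multiple of $X^{\hat\nu}_t$, and a H\"older estimate pairing \eqref{lemma:first_order_condition:admissible_controls} with \eqref{lemma:first_order_condition:integrability_condition}/\eqref{lemma:first_order_condition:integrability_condition_esssup} to control the remainder uniformly in $T$. The paper uses the integral mean-value form $\int_0^1 g_x(\cdot)\,d\tau$ and the bound $\geq\tfrac14 X^{\hat\nu}_t$ on $\eps\in(0,\tfrac12]$, whereas you use a second-order Lagrange remainder and $\zeta_t\geq(1-\eps)X^{\hat\nu}_t$; these are cosmetic variants.

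The one substantive difference is how you pass $\eps\downarrow 0$. The paper packages the uniform-in-$T$ convergence into a separate Moore--Osgood--type lemma for $\liminf$ and applies it to exchange limits. You bypass this: from $D^0_T\leq D^\eps_T+C\eps(1-\eps)^{\alpha-2}$ uniformly in $T$ and $\liminf_T D^\eps_T\leq 0$ you get $\liminf_T D^0_T\leq C\eps(1-\eps)^{\alpha-2}$ directly, then let $\eps\downarrow 0$. This is cleaner and avoids the auxiliary result.

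One small slip in part \ref{lemma:first_order_condition:sufficient}: the identity you quote, $\liminf_T(a_T-b_T)=\lim_T a_T-\limsup_T b_T$, is true but not the one you need. To conclude $\tilde\J(\hat\nu)\geq\tilde\J(\nu)=\liminf_T b_T$ from \eqref{lemma:first_order_condition:ineq_limsup} you want $\limsup_T(a_T-b_T)=\lim_T a_T-\liminf_T b_T$, which together with the pointwise tangent bound and the $\limsup$ hypothesis gives the result (this is exactly how the paper argues).
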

The proof is postponed to the \nameref{sec:appendix}.

\begin{remark}\label{rmk:first_order}
Let $a>0$ and let $\nu^a$ be the policy that reflects the process $X^{\hat{\nu}}$ upwards à la Skorohod at the barrier $a$.
Then, the control $\nu^a$ is so that $\prob( (X^{\nu^a}_t)^{\alpha - 2} \leq c \: \forall t \geq 0) = 1$, for a constant $c > 0$, since the reflected process is so that $X^{\nu^a}_t \geq a$ for any $t$, and $X^{\hat{\nu}}$ belongs to $\A_{2}$ by Lemma \ref{geometric_bm:reflected:lemma}.
Therefore, \eqref{lemma:first_order_condition:integrability_condition_esssup} is satisfied, and we take $q=1$ in \eqref{lemma:first_order_condition:admissible_controls}.
Assumption \ref{assumption:dissipativity} and Lemma \ref{geometric_bm:reflected:lemma} imply that $X^{\nu^a}$ satisfies  \eqref{lemma:first_order_condition:admissible_controls}. 
By Lemma \ref{geometric_bm:reflected:lemma}, \eqref{lemma:first_order_condition:limit} is satisfied as well.
\end{remark}

\begin{remark}
We can restate Lemma \ref{lemma:first_order_condition} in terms of linear conditions involving optional projections.
Consider the probability measure $\Tilde{\prob}$ equivalent to $\prob$ defined via the Radon-Nykodim derivative
\[
\frac{d\Tilde{\prob}}{d\prob} \Big\vert_{\F_t} = e^{\sigma W_t - \frac{\sigma^2}{2}t}, \quad t \geq 0.
\]
It can be shown that, for every $\nu \in \A$, the state process $X^{\nu}$ can be represented as
\[
X^{\nu}_t = e^{-\delta t} M_t (\xi + \bar \nu_t ),
\]
where $\bar{\nu} = (\bar{\nu}_t)_{t \geq 0}$ is defined via the identity $\nu_t =\int_0^t e^{-\delta s}M_s d\bar\nu_s$.
Denote by $\Tilde{\E}$ the expectation with respect to $\Tilde{\prob}$.
By taking advantage of Fubini's theorem and optional projections as in \cite[Theorem 57, Chapter VI, p. 122]{dellacherie1982probabilities_potential_B}, we can restate necessary condition \eqref{lemma:first_order_condition:ineq_limsup} as
\begin{equation*}
\limsup_{T \uparrow \infty}\frac{1}{T} \Tilde{\E} \left[ \int_0^T  \left( \Tilde{\E}\left[ \int_s^T e^{-\delta t} g_x(X^{\hat{\nu}}_t,p,\lambda)dt  \Big \vert \F_s \right] -q e^{-\delta s} \right) d(\hat{\bar{\nu}}_s - \bar{\nu}_s) \right] \geq 0,  \quad \forall \nu \in \A_{2q},
\end{equation*}
and analogously holds for \eqref{lemma:first_order_condition:ineq_liminf} and \eqref{lemma:first_order_condition:limit}.
While first order conditions for optimality are well known for both finite and infinite time horizon singular control problems (see, e.g., \cite{bank2005sicon_fuel_constaint,bank_riedel2001AAP,federico2021infinite,ferrari_salminen2016irreversible} among others), to the best of our knowledge, this is the first time in the literature that first order conditions for singular control problems with ergodic reward functionals have been derived.
\end{remark}

\section{Cooperative Case: Mean-field Solution and Approximation}\label{sec:cooperative}

In this section, we tackle the mean-field control solution of the central planner's optimization problem defined in Definition \ref{mfc:def:optimal_ctrl}.

\subsection{Mean-field solution}
The first result, Theorem \ref{mfc:thm:optimal_control}, regards existence and uniqueness of optimal solutions of the MFC problem.
In order to compute the optimal control, we use a Lagrangian multiplier type approach, to take care of the constraint on the stationary first order moment.
We first restrict to strategies so that the corresponding stationary mean is equal to some prescribed level $ \theta \geq 0$, we compute the optimal strategy within this smaller constrained set and finally we optimize over all possible values of the stationary mean.
A somehow similar approach has been used in \cite{christensen2021competition}, for a MFC problem of impulse control.

\begin{thm}\label{mfc:thm:optimal_control}
If $\alpha + \beta < 1$, there exists an optimal control $\hat{\nu}$ for the MFC problem.
The process $\hat{\nu}$ reflects the state process $X^{\hat{\nu}}$ upwards à la Skorohod at the barrier $\hat{a}$ given by
\begin{equation}\label{mfc:optimal_reflection}
\hat{a} = \left[ \frac{2(\alpha + \beta)}{q(2\delta + \sigma^2(1-\alpha))} \left( \frac{2\delta+\sigma^2}{2\delta} \right)^{\beta} \right] ^{\frac{1}{1 - \alpha -\beta}},
\end{equation}
and the corresponding stationary mean is given by 
\begin{equation}\label{mfc:stationary_moment}
    \hat{\theta} = \left( \frac{2\delta+\sigma^2}{2\delta} \right)^{\frac{1-\alpha}{1 - \alpha -\beta}} \left[ \frac{2(\alpha + \beta)}{q(2\delta + \sigma^2(1-\alpha))} \right]^{\frac{1}{1-\alpha -\beta}}.
\end{equation}
If instead $\alpha + \beta > 1$, the problem is ill-posed, in the sense that 
\[
\sup_{\nu \in \A_{MFC}}\J(\nu,\E[X^{\nu}_\infty]) = +\infty.
\]
Finally, if $\alpha + \beta = 1$ and 
\begin{equation}\label{eq:mfc:condition_zero_infinity}
    \frac{2\delta + \sigma^2}{2\delta + \sigma^2(1-\alpha)}\left( \frac{2\delta}{2\delta+\sigma^2}\right)^\alpha  < q\delta,
\end{equation}
the null control $\nu \equiv 0$ is optimal; otherwise, the problem is ill-posed.
\end{thm}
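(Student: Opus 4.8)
The plan is to follow the Lagrange-multiplier, two-stage scheme announced before the statement. For a target level $\theta \ge 0$ I would introduce the constrained value
\[
V(\theta) := \sup\{\, \J(\nu,\theta) : \nu \in \A_{MFC}, \ \E[X^{\nu}_\infty] = \theta \,\},
\]
with the convention $V(\theta) = -\infty$ if no admissible control has stationary mean $\theta$. Since every $\nu \in \A_{MFC}$ has a well-defined stationary mean, partitioning $\A_{MFC}$ according to the value of $\E[X^{\nu}_\infty]$ gives the tautological identity $\sup_{\nu \in \A_{MFC}} \J(\nu, \E[X^{\nu}_\infty]) = \sup_{\theta \ge 0} V(\theta)$. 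It then remains to compute $V(\theta)$ (Stage 1) and to optimize over $\theta$ (Stage 2).

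\textbf{Stage 1.} Fix $\theta > 0$ and set $\bar a := \tfrac{2\delta}{2\delta + \sigma^2}\theta$, so that by \eqref{geometric_bm:reflected:barrier_mean} the reflecting policy $\nu^{\bar a}$ has stationary mean exactly $\theta$ and is feasible for $V(\theta)$. I would then pick the multiplier $\lambda = \lambda(\theta)$ so that the barrier \eqref{lemma:ergodic_control:barrier} satisfies $a^*(\theta^\beta,-\lambda) = \bar a$; solving this gives $q\delta + \lambda = \tfrac{2\alpha\delta}{2\delta+\sigma^2(1-\alpha)}\,\theta^\beta \bar a^{\alpha-1} > 0$, so Lemma \ref{lemma:ergodic_control} applies with running reward $g(\cdot,\theta^\beta,-\lambda)$ and designates $\nu^{\bar a}$ as its maximizer. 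The key step is the Cesàro identity $\lim_{T}\tfrac1T\E[\int_0^T X^{\nu}_t\,dt] = \E[X^{\nu}_\infty]$, valid for every admissible $\nu$ and equal to $\theta$ on feasible controls; granting it, the $-\lambda X^{\nu}_t$ term has a convergent Cesàro average $-\lambda\theta$, whence $\J(\nu,\theta) = \Tilde{\J}(\nu,\theta^\beta,-\lambda) + \lambda\theta$ for all feasible $\nu$. Combining this equality with the optimality of $\nu^{\bar a}$ from Lemma \ref{lemma:ergodic_control} sandwiches
\[
C(\bar a,\theta^\beta) = \J(\nu^{\bar a},\theta) \le V(\theta) \le \sup_{\nu \in \A}\Tilde{\J}(\nu,\theta^\beta,-\lambda) + \lambda\theta = \Tilde{\J}(\nu^{\bar a},\theta^\beta,-\lambda) + \lambda\theta = C(\bar a,\theta^\beta),
\]
so that $V(\theta) = C(\bar a(\theta),\theta^\beta)$ with $C$ as in \eqref{alvarez:ergodic_functional}; for $\theta = 0$ the null control, admissible under Assumption \ref{assumption:dissipativity} with stationary law concentrated at the origin, gives $V(0) = 0$.

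\textbf{Stage 2.} Writing $\theta = \tfrac{2\delta + \sigma^2}{2\delta}a$ and substituting into \eqref{alvarez:ergodic_functional}, the map $\theta \mapsto V(\theta)$ becomes, up to this change of variable, the explicit function
\[
G(a) = (2\delta + \sigma^2)\left(\frac{1}{2\delta + \sigma^2(1-\alpha)}\Big(\tfrac{2\delta+\sigma^2}{2\delta}\Big)^\beta a^{\alpha+\beta} - \frac{q}{2}\,a\right), \qquad a \ge 0,
\]
to be maximized over $a \ge 0$ (with $a=0$ the null control). If $\alpha+\beta < 1$, then $G$ is strictly concave with $G(0)=0$, $G'(0^+)=+\infty$ and $G(a)\to-\infty$, so its unique stationary point $\hat a$ from $G'(\hat a)=0$ is $\eqref{mfc:optimal_reflection}$, with mean $\hat\theta = \tfrac{2\delta+\sigma^2}{2\delta}\hat a = \eqref{mfc:stationary_moment}$ and value $G(\hat a)>0=G(0)$, so that $\nu^{\hat a}$ is the optimizer. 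If $\alpha+\beta>1$ the leading term is superlinear, $G(a)\to+\infty$, and since each $\nu^{a}$ is admissible by Lemma \ref{geometric_bm:reflected:lemma} this forces $\sup_\nu \J(\nu,\E[X^{\nu}_\infty]) = +\infty$. If $\alpha+\beta=1$, $G$ is linear with slope whose sign is governed exactly by \eqref{eq:mfc:condition_zero_infinity}: under \eqref{eq:mfc:condition_zero_infinity} the slope is negative and $\sup_{a\ge 0}G(a)=G(0)=0$ is attained by the null control, whereas in the reverse strict case the slope is positive and $G(a)\to+\infty$, giving ill-posedness.

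\textbf{Main obstacle.} The only nontrivial analytic point is the Cesàro identity $\lim_{T}\tfrac1T\E[\int_0^T X^{\nu}_t\,dt] = \E[X^{\nu}_\infty]$ linking the McKean–Vlasov constraint to the ergodic functional of Lemma \ref{lemma:ergodic_control}. I would derive it from \eqref{mf:dynamics}: taking expectations in the integrated SDE, with Assumption \ref{assumption:dissipativity} and the finite second moment ensuring that $\sigma\int_0^\cdot X^{\nu}\,dW$ is a true martingale, yields $\delta\,\E[\int_0^T X^{\nu}_t\,dt] = \E[\xi] + \E[\nu_T] - \E[X^{\nu}_T]$; one then shows $\E[X^{\nu}_T]/T \to 0$ and that the marginal means $\E[X^{\nu}_T]$ converge to $\E[X^{\nu}_\infty]$ via the dissipativity and the assumed uniqueness of the stationary law, so that the Cesàro average of the first moment converges to the stationary mean. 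For the reflecting policies $\nu^{a}$ this reduces directly to the positive recurrence and the explicit stationary law of Lemma \ref{geometric_bm:reflected:lemma}.
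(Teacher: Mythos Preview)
Your proposal is correct and follows essentially the same Lagrange-multiplier two-stage scheme as the paper: introduce the constraint $\E[X^\nu_\infty]=\theta$, pick the multiplier so that the unconstrained maximizer of Lemma~\ref{lemma:ergodic_control} lands on a reflecting policy with stationary mean $\theta$, and then optimize the resulting explicit function over the mean. The only cosmetic difference is that you change variables to optimize $G$ over the barrier $a$ while the paper optimizes the equivalent function $f(\theta)$ in \eqref{mfc:mean_dependence_function} over the mean; your identification of the Ces\`aro identity as the one nontrivial analytic step is in fact more explicit than the paper's ``using ergodicity'' in \eqref{mfc:lagrangian:2nd_expression}.
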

\begin{proof}
We note that
\begin{equation}\label{mfc:lagrangian:1st_expression}
\sup_{\nu \in \A_{MFC}} \J(\nu,\E[X^{\nu}_\infty]) = \sup_{\theta > 0} \sup_{\substack{\nu \in \A_{MFC} \\ \E[X^{\nu}_\infty] = \theta}} \J(\nu,\theta) = \sup_{\theta > 0} \sup_{\substack{\nu \in \A_{MFC} \\ \E[X^{\nu}_\infty] = \theta}} \tonde{\J(\nu,\theta) +\lambda(\theta)(\E[X^{\nu}_\infty] -\theta)},
\end{equation}
where $\lambda:\R_+ \to \R$ is any real function of the mean $\theta$.
Moreover, for $\nu \in \A_{MFC}$ so that $\E[X^\nu_\infty] = \theta$, we rewrite the right-hand side term of \eqref{mfc:lagrangian:1st_expression} using ergodicity:
\begin{equation}\label{mfc:lagrangian:2nd_expression}
\begin{aligned}
&\J(\nu,\theta) +\lambda(\theta)( \E[X^{\nu}_\infty] -\theta) = \J(\nu,\theta) + \lim_{T \uparrow \infty} \frac{1}{T}\int_0^T\E[\lambda(\theta) X^{\nu}_t -\lambda(\theta) \theta]dt \\
& = -\lambda(\theta) \theta + \liminf_{T \uparrow \infty} \frac{1}{T}\E\left[ \int_0^T \tonde{(X^{\nu}_t)^\alpha \theta^{\beta} + \lambda(\theta) X^{\nu}_t}dt - q\nu_T \right] = -\lambda(\theta)\theta + \Tilde{\J}(\nu,\theta^\beta,\lambda(\theta)),
\end{aligned}
\end{equation}
where $\Tilde{\J}$ is defined in \eqref{lemma:ergodic_control:payoff}.
We split the problem in the following three steps:
\begin{enumerate}[label=\arabic*),wide]
    \item \label{mfc:thm_opt_ctrl:step1} For fixed $\theta$ and $\lambda$, show that there exists a unique optimal control of barrier type which maximizes $\Tilde{J}(\nu,\theta^\beta,\lambda)$ over $\A_{mf}$.
    Denote by $\hat{a}(\theta,\lambda)$ the optimal reflection barrier.
    \item \label{mfc:thm_opt_ctrl:step2} Show that for any $\theta>0$ there exists a real value $\lambda(\theta)$ so that $ \theta=\E[X^{\nu^{\hat{a}(\theta,\lambda(\theta))}}_\infty]$ and deduce that
    \[
    \sup_{\substack{\nu \in \A_{MFC} \\ \E[X^{\nu}_\infty] = \theta}} \liminf_{T \uparrow \infty} \frac{1}{T}\E\left[ \int_0^T \tonde{(X^{\nu})^\alpha \theta^{\beta} + \lambda(\theta) X^{\nu}_t}dt - q\nu_T \right] = \Tilde{\J}(\nu^{\hat{a}(\theta,\lambda(\theta))},\theta^\beta,\lambda(\theta)).
    \]
    \item \label{mfc:thm_opt_ctrl:step3} Perform the optimization over $\theta \in \R_+$.
\end{enumerate}

As for Step \ref{mfc:thm_opt_ctrl:step1}, we restrict to $\lambda < q \delta$, so that, by applying Lemma \ref{lemma:ergodic_control} with $p = \theta^\beta$, the reflection policy at the level $\hat{a}(\theta,\lambda)=a^*(\theta^\beta,\lambda)$ given by \eqref{lemma:ergodic_control:barrier} is optimal.

\smallskip
As for Step \ref{mfc:thm_opt_ctrl:step2}, we look for $\lambda(\theta)$ so that the stationary distribution satisfies $\E[X^{\nu^{\hat{a}(\theta,\lambda(\theta))}}_\infty] = \theta $ and the condition $q \delta - \lambda(\theta) > 0$ holds.
In view of \eqref{geometric_bm:reflected:barrier_mean}, this is equivalent to imposing
\[
\frac{2\delta + \sigma^2}{2\delta} \tonde{ \frac{2\delta + \sigma^2(1-\alpha) }{ 2\alpha\delta } \frac{q\delta -\lambda}{\theta^{\beta}} }^{\frac{1}{\alpha - 1}} = \theta, \quad q \delta - \lambda(\theta) > 0
\]
which are both satisfied by 
\begin{equation}\label{mfc:lagrangian_multiplier}
    \lambda(\theta) = q\delta - \tonde{\frac{2\delta+\sigma^2}{2\delta}  }^{1- \alpha} \frac{2\delta\alpha}{2\delta + \sigma^2(1-\alpha)} \theta^{\alpha + \beta - 1}.
\end{equation}
Therefore, by choosing $\lambda(\theta)$ as the Lagrangian multiplier in \eqref{mfc:lagrangian:1st_expression}, we have that 
\begin{equation*}
\begin{aligned}
    & \sup_{\nu \in \A_{MFC} } \J(\nu,\E[X^{\nu}_\infty]) = \sup_{\theta>0} \bigg( -\lambda(\theta) \theta + \sup_{\substack{\nu \in \A_{MFC} \\ \E[X^{\nu}_\infty] = \theta}}  \Tilde{\J}(\nu;\theta,\lambda(\theta) ) \bigg) \\
    & = \sup_{\theta>0} \left( -\lambda(\theta) \theta +\Tilde{\J}(\nu^{\hat{a}(\theta,\lambda(\theta))};\theta,\lambda(\theta)) \right) = \sup_{\theta>0} \J(\nu^{\hat{a}(\theta,\lambda(\theta))},\theta) = \sup_{\theta>0} \J(\nu^{\hat{a}(\theta,\lambda(\theta))},\E[X^{\nu^{\hat{a}(\theta,\lambda(\theta))}}_\infty]).
\end{aligned}
\end{equation*}

\medskip
We are left with performing the optimization over $\theta \in \R_+$.
By exploiting \eqref{geometric_bm:reflected:barrier_mean} and \eqref{alvarez:ergodic_functional}, for every $\theta>0$ we have 
\[
\J(\nu^{\hat{a}(\theta,\lambda(\theta))},\E[X^{\nu^{\hat{a}(\theta,\lambda(\theta))}}_\infty]) = C(\hat{a}(\theta,\lambda(\theta)),\theta)=\frac{2\delta + \sigma^2}{2\delta + \sigma^2(1-\alpha)}\left( \frac{2\delta}{2\delta+\sigma^2}\right)^\alpha \theta^{\alpha+\beta} - q\delta \theta.
\]
Set
\begin{equation}\label{mfc:mean_dependence_function}
    f(\theta) := \frac{2\delta + \sigma^2}{2\delta + \sigma^2(1-\alpha)}\left( \frac{2\delta}{2\delta+\sigma^2}\right)^\alpha \theta^{\alpha+\beta} - q\delta \theta.
\end{equation}
If $\alpha + \beta < 1$, we have
\begin{align*}
    & f'(\theta) = (\alpha + \beta ) \tonde{\frac{2\delta}{ 2\delta+\sigma^2 } }^{\alpha}\frac{2\delta + \sigma^2}{2\delta + \sigma^2(1-\alpha)}\theta^{\alpha + \beta -1} -  q\delta,
\end{align*}
so that $f''(\theta) < 0$ for every $\theta>0$, i.e. $f$ is strictly concave in $\R_+$.
This implies that there exists a unique maximizer $\hat{\theta}$.
By imposing $f'(\theta) = 0$, we find the expression of $\hat{\theta}$ given by \eqref{mfc:stationary_moment}, and by \eqref{geometric_bm:reflected:barrier_mean} we find the expression of $\hat{a}$ in \eqref{mfc:optimal_reflection}.

\smallskip
If $\alpha + \beta >1$, the function $f$ defined in \eqref{mfc:mean_dependence_function} is unbounded, and therefore the MFC problem does not admit a maximizer.
Finally, suppose $\alpha + \beta = 1$.
Then, the function $f$ is just given by 
\[
f(\theta) = \left( \frac{2\delta + \sigma^2}{2\delta + \sigma^2(1-\alpha)}\left( \frac{2\delta}{2\delta+\sigma^2}\right)^\alpha  - q\delta \right) \theta.
\]
Since $f$ is linear in $\theta$, we either have $\sup_{\theta>0}f(\theta)$ equal to $0$ or $+\infty$ depending on the sign of the coefficient.
\end{proof}

\begin{remark}\label{mfc:rmk_assumptions}
Assumptions \ref{assumption_F_0} and \ref{assumption:dissipativity} are not needed to prove Proposition \ref{mfc:thm:optimal_control}.
On the other hand, according to the previous result, restrictions on the parameters are needed in order to have existence of the optimal control.
Notice that, if an optimal control exists, it is always unique, by strict concavity of the reward functional.
\end{remark}

For the sake of completeness and for later use, we derive the relationship between the Lagrangian multiplier $\lambda(\theta)$ and the constraint parameter $\theta$.

\begin{lemma}\label{central_planner:lemma:envelope_theorem}
Let $\nu = \nu^{a(\theta)}$ be the strategy which reflect the process $X^{\nu^{a(\theta)}}$ upwards at the barrier $a(\theta) = \frac{2\delta}{2\delta + \sigma^2}\theta$.
Let $\lambda(\theta)$ be given by \eqref{mfc:lagrangian_multiplier} and $f$ given by \eqref{mfc:mean_dependence_function}.
Then, for any $\theta > 0$, it holds
\begin{equation}\label{central_planner:envelope_theorem}
f'(\theta) + \lambda(\theta) = \liminf_{T \uparrow \infty} \frac{1}{T}\E\left[\int_0^T \pi_\theta(X^{\nu^{a(\theta)}}_t,\theta)dt \right].
\end{equation}
\end{lemma}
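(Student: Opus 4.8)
The plan is to reduce both sides of \eqref{central_planner:envelope_theorem} to explicit functions of $\theta$ and verify that they coincide. The key observation is that the right-hand side involves only the ergodic average of the running profit along the reflected diffusion $X^{\nu^{a(\theta)}}$, which is computable in closed form from the stationary measure $p_{a(\theta)}$ provided by Lemma \ref{geometric_bm:reflected:lemma}.

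First I would compute $\pi_\theta(x,\theta) = \beta x^\alpha \theta^{\beta-1}$, so that the right-hand side of \eqref{central_planner:envelope_theorem} factors as
\[
\beta\,\theta^{\beta-1}\,\liminf_{T\uparrow\infty}\frac{1}{T}\E\Big[\int_0^T (X^{\nu^{a(\theta)}}_t)^\alpha\,dt\Big].
\]
By Lemma \ref{geometric_bm:reflected:lemma} the process $X^{\nu^{a(\theta)}}$ is positively recurrent with stationary law $p_{a(\theta)}$ and finite moments up to order $2$, so the ergodic limit exists (it is precisely the running-profit part of the functional $C(a,p)$ in \eqref{alvarez:ergodic_functional}, whose value is supplied by \cite[Lemma 2.1]{alvarez2018stationary}) and equals $\int_{\R_+} x^\alpha p_{a(\theta)}(dx)$. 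Integrating $x^\alpha$ against the density \eqref{gemetric_bm:reflected:density} then yields $\int_{\R_+} x^\alpha p_a(dx) = \tfrac{2\delta+\sigma^2}{2\delta+\sigma^2(1-\alpha)}\,a^\alpha$.

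Next I would substitute the barrier $a(\theta) = \tfrac{2\delta}{2\delta+\sigma^2}\theta$, so that the right-hand side becomes
\[
\beta\,\frac{2\delta+\sigma^2}{2\delta+\sigma^2(1-\alpha)}\Big(\frac{2\delta}{2\delta+\sigma^2}\Big)^\alpha\,\theta^{\alpha+\beta-1}.
\]
For the left-hand side I would differentiate $f$ in \eqref{mfc:mean_dependence_function}, obtaining $f'(\theta) = (\alpha+\beta)\tfrac{2\delta+\sigma^2}{2\delta+\sigma^2(1-\alpha)}\big(\tfrac{2\delta}{2\delta+\sigma^2}\big)^\alpha\theta^{\alpha+\beta-1} - q\delta$, and rewrite the multiplier \eqref{mfc:lagrangian_multiplier} by absorbing the factor $\big(\tfrac{2\delta+\sigma^2}{2\delta}\big)^{1-\alpha}\tfrac{2\delta}{2\delta+\sigma^2}$ into a single power $\big(\tfrac{2\delta}{2\delta+\sigma^2}\big)^\alpha$, which gives $\lambda(\theta) = q\delta - \alpha\,\tfrac{2\delta+\sigma^2}{2\delta+\sigma^2(1-\alpha)}\big(\tfrac{2\delta}{2\delta+\sigma^2}\big)^\alpha\theta^{\alpha+\beta-1}$.

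Adding $f'(\theta)$ and $\lambda(\theta)$, the terms $\mp q\delta$ cancel and the coefficients combine as $(\alpha+\beta)-\alpha = \beta$, producing exactly the right-hand side computed above and closing the proof. The computation is elementary, so there is no genuine analytic obstacle beyond careful bookkeeping of the exponents and ratios; the only point deserving a brief justification is that the $\liminf$ in the ergodic average is in fact a true limit equal to the stationary integral, which follows from the positive recurrence and the integrability bounds in Lemma \ref{geometric_bm:reflected:lemma} together with \cite[Lemma 2.1]{alvarez2018stationary}.
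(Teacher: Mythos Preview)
Your proof is correct and follows essentially the same route as the paper: both reduce the identity to a direct algebraic verification, computing $f'(\theta)$ and $\lambda(\theta)$ explicitly, recognizing that they combine as $[(\alpha+\beta)-\alpha]\cdot\frac{2\delta+\sigma^2}{2\delta+\sigma^2(1-\alpha)}\big(\tfrac{2\delta}{2\delta+\sigma^2}\big)^\alpha\theta^{\alpha+\beta-1}$, and identifying this with the ergodic average of $\pi_\theta$ along the reflected diffusion. The only cosmetic difference is that the paper keeps the ergodic average in $\liminf$ form throughout and rewrites $\lambda(\theta)$ and $f'(\theta)$ directly in terms of it, whereas you first evaluate the stationary integral $\int x^\alpha p_{a(\theta)}(dx)$ in closed form and then match; the substance is identical.
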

The proof is postponed to the \nameref{sec:appendix}.

\begin{remark}\label{rmk:derivative_value_function}
The same calculations of Lemma \ref{central_planner:lemma:envelope_theorem} show that the following relation holds:
\[
\lambda(\hat{\theta}) =  \partial_\theta \J(\hat{\nu},\hat{\theta}) = \partial_\theta \left( \sup_{\nu \in \A_{MFC}} \J(\nu,\E[X^\nu_\infty]) \right).
\]
In this sense, the Lagrange multiplier $\lambda(\hat{\theta})$ can be regarded as the derivative of the value function with respect to the  mean-field term.
\end{remark}

\subsection{Approximation}
We show that any solution to the MFC problem as given by Theorem \ref{mfc:thm:optimal_control} induces a sequence $(\boldsymbol{\hat{\nu}}^{N})_{N \geq 1}$ of approximate optimal strategy profiles for the central planner in the smaller class of strategy profiles $\A^{cp,\kappa}_N \subseteq \A^N_N$, defined in subsequent Definition \ref{central_planner:admissible_strategies}, with vanishing error.
Our approach is mainly inspired by \cite[Section 6]{carmona2015AP}, although it uses different techniques due to the nature of our dynamics and payoff. Notice that the exchangeability assumption is not required.

\medskip
Let $N \geq 2$.
We consider the following set $\mathcal{C} \subseteq \A_N^N$ of strategies for the central planner.
\begin{definition}[Admissible strategies for the central planner optimization problem]\label{central_planner:admissible_strategies}
Let $\kappa>0$ be a fixed constant.
A strategy profile $\boldsymbol{\beta}^{N} = (\beta^{1,N},\dots,\beta^{N,N})$ is an admissible strategy profile for the  central planner optimization problem if $\beta^{i,N} \in \A_{N}$ for any $i=1,\dots,N$ and if they are ergodic, in the sense that the resulting $N$-dimensional process $(X_t^{\beta^{1,N}},\dots, X^{\beta^{N,N}}_t)_{t \geq 0}$ is ergodic, where $X^{\beta^{i,N}}$ is given by \eqref{N_player:dynamics} for every $i=1,\dots,N$.
Moreover, every strategy $\beta^j$ is such that
\begin{equation}\label{central_planner:admisible:bound_momenti}
\sup_{T > 0}\frac{1}{T}\E\left[ \int_0^T \vert X^{\beta^j}_t \vert^2 dt  \right] \leq \kappa, \quad \forall \; j=1,\dots,N.
\end{equation}
We denote the set of strategy profiles for the central planner by $\A^{cp,\kappa}_N$. 
\end{definition}
Observe that the inclusion $\A^{cp,\kappa}_N \subseteq \A^N_N$ holds strictly.
In particular, the ergodicity request is needed in order to freely exchange sums and inferior limits throughout the proof, while the restriction to strategy profiles satisfying the bound \eqref{central_planner:admisible:bound_momenti} allows to get uniform estimates in the aforementioned class of admissible strategies.

\begin{thm}\label{central_planner:thm:approximation}
Let $\boldsymbol{\hat{\nu}}^{N} = (\hat{\nu}^{1},\dots,\hat{\nu}^{N})$ be the strategy profile that reflects each process $X^{\hat{\nu}^i}$ upwards à la Skorohod at the level $\hat{a}$ given by \eqref{mfc:optimal_reflection}.
It holds
\begin{equation}
    \lim_{N \to \infty} \sup_{\boldsymbol{\beta}^{(N)} \in \A^{cp,\kappa}_N} \Bar{\J}^N(\boldsymbol{\beta}^{(N)}) = \lim_{N \to \infty} \Bar{\J}^N(\boldsymbol{\hat{\nu}}^{N})= \J(\hat{\nu},\E[X^{\hat{\nu}}_\infty]).
\end{equation}
\end{thm}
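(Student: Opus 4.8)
The plan is to prove both equalities by a sandwich argument. Since the barrier profile $\boldsymbol{\hat\nu}^N$ lies in $\A^{cp,\kappa}_N$ for every $\kappa$ large enough — its moment bound \eqref{central_planner:admisible:bound_momenti} and ergodicity follow from Lemma \ref{geometric_bm:reflected:lemma} — we always have $\Bar\J^N(\boldsymbol{\hat\nu}^N)\le\sup_{\boldsymbol\beta^N\in\A^{cp,\kappa}_N}\Bar\J^N(\boldsymbol\beta^N)$. Writing $V:=\J(\hat\nu,\E[X^{\hat\nu}_\infty])$, it therefore suffices to establish (i) the achievability bound $\liminf_{N}\Bar\J^N(\boldsymbol{\hat\nu}^N)\ge V$ and (ii) the uniform upper bound $\limsup_{N}\sup_{\boldsymbol\beta^N}\Bar\J^N(\boldsymbol\beta^N)\le V$; together these pinch the whole chain to $V$.

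For (i) I would exploit that, under $\boldsymbol{\hat\nu}^N$, the processes $(X^{\hat\nu^i})_{i}$ are i.i.d. copies of the diffusion reflected at $\hat a$, so by symmetry $\Bar\J^N(\boldsymbol{\hat\nu}^N)=\J^N(\hat\nu^1,\boldsymbol{\hat\nu}^{-1,N})$ and, for each $t$, $X^{\hat\nu^1}_t$ is independent of $\theta^{N,1}_t=\frac1{N-1}\sum_{j\ne1}X^{\hat\nu^j}_t$. Factoring the expectation gives $\E[(X^{\hat\nu^1}_t)^\alpha(\theta^{N,1}_t)^\beta]=\E[(X^{\hat\nu^1}_t)^\alpha]\,\E[(\theta^{N,1}_t)^\beta]$. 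The law of large numbers yields $\theta^{N,1}_t\to\E[X^{\hat\nu}_t]$, and the uniform second-moment bound of Lemma \ref{geometric_bm:reflected:lemma}\ref{geometric_bm:reflected:lemma:integrability} provides the uniform integrability needed to pass $x\mapsto x^\beta$ to the limit, so $\E[(\theta^{N,1}_t)^\beta]\to(\E[X^{\hat\nu}_t])^\beta$. The remaining point is to exchange the limits $N\to\infty$ and $T\to\infty$: here positive recurrence (Lemma \ref{geometric_bm:reflected:lemma}\ref{geometric_bm:reflected:lemma:reflection}) makes the ergodic average select the stationary values, giving $\lim_N\Bar\J^N(\boldsymbol{\hat\nu}^N)=\hat\theta^\beta\int_{\R_+}x^\alpha p_{\hat a}(dx)-q\delta\hat\theta=V$, where the stationary cost rate $q\delta\hat\theta$ comes from Dynkin's formula applied to $X^{\hat\nu}$.

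For (ii), fix $\boldsymbol\beta^N\in\A^{cp,\kappa}_N$ and set $\bar X^N_t:=\frac1N\sum_iX^{\beta^i}_t$. Ergodicity lets me pull $\liminf_T$ through the finite player-average, so that $\Bar\J^N(\boldsymbol\beta^N)=\liminf_T\frac1T\E[\int_0^T\frac1N\sum_i(X^{\beta^i}_t)^\alpha(\theta^{N,i}_t)^\beta dt-\frac qN\sum_i\beta^i_T]$. The decisive step is to linearize $s\mapsto s^\beta$ at the empirical mean $\bar X^N_t$ rather than at $\hat\theta$: since $\theta^{N,i}_t-\bar X^N_t=(\bar X^N_t-X^{\beta^i}_t)/(N-1)$, the first-order term telescopes into a multiple of $\bar X^N_t\frac1N\sum_i(X^{\beta^i}_t)^\alpha-\frac1N\sum_i(X^{\beta^i}_t)^{1+\alpha}$, which is non-positive by Chebyshev's sum inequality and may be discarded. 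This yields the clean bound $\Bar\J^N(\boldsymbol\beta^N)\le\liminf_T\frac1T\E[\int_0^T(\bar X^N_t)^\beta\frac1N\sum_i(X^{\beta^i}_t)^\alpha dt-q\bar\beta^N_T]$. It then remains to dominate this reduced functional by $V$, and here I would bring in the probabilistic representation $\hat\lambda=\lambda(\hat\theta)=\beta\hat\theta^{\beta-1}\int_{\R_+}x^\alpha p_{\hat a}(dx)$ of the multiplier from Lemma \ref{central_planner:lemma:envelope_theorem}, the single-agent optimality of Lemma \ref{lemma:ergodic_control}, and the stationary cost identity (ergodic cost rate $=\delta\times$ mean) from Dynkin's formula, so that the law of large numbers can identify the reduced functional with a genuine mean-field reward.

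The main obstacle is exactly this final domination for arbitrary, possibly strongly correlated, profiles. Two tempting shortcuts both fail. Bounding $\frac1N\sum_i(X^{\beta^i}_t)^\alpha\le(\bar X^N_t)^\alpha$ by Jensen collapses the reduced functional onto the deterministic ergodic value $\max_\theta\{\theta^{\alpha+\beta}-q\delta\theta\}$, which strictly exceeds $V=\max_\theta\{A\theta^{\alpha+\beta}-q\delta\theta\}$ (with $A=\frac{2\delta+\sigma^2}{2\delta+\sigma^2(1-\alpha)}(\frac{2\delta}{2\delta+\sigma^2})^\alpha<1$), since it discards the Jensen penalty that the stationary distribution $p_{\hat a}$ necessarily pays. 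Summing the fixed-multiplier single-agent bounds $\tilde\J(\beta^i,\hat\theta^\beta,\hat\lambda)\le V+\hat\lambda\hat\theta$ instead leaves a residual term proportional to the time-averaged correlation between $\bar X^N_t-\hat\theta$ and $\frac1N\sum_i(X^{\beta^i}_t)^\alpha-\int_{\R_+} x^\alpha p_{\hat a}(dx)$, which need not vanish and can push the bound above $V$. The real content, therefore, is to show that the independent multiplicative noises driving the $X^{\beta^i}$, together with the fact that the controls can only push the states upward, prevent the empirical law from concentrating and force each player's stationary productivity down to the mean-field level; the uniform second-moment bound and ergodicity built into $\A^{cp,\kappa}_N$ (Definition \ref{central_planner:admissible_strategies}) and the probabilistic representation of $\hat\lambda$ are precisely the ingredients that let the law of large numbers close this gap.
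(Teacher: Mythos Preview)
Your achievability part (i) is essentially fine and matches the paper's closing line, which also invokes ergodicity and the LLN on the i.i.d.\ reflected copies.

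For the upper bound (ii), however, there is a genuine gap, which you yourself identify: after linearizing $s\mapsto s^\beta$ at the empirical mean $\bar X^N_t$, you are left with the reduced functional $\liminf_T\frac1T\E[\int_0^T(\bar X^N_t)^\beta\frac1N\sum_i(X^{\beta^i}_t)^\alpha dt-q\bar\beta^N_T]$, and you cannot bound it by $V$ uniformly over $\A^{cp,\kappa}_N$. Your diagnosis of why the two shortcuts fail is correct; the last paragraph is only a hope, not an argument. The residual correlation you isolate involves $\frac1N\sum_i(X^{\beta^i}_t)^\alpha$, a functional of the \emph{arbitrary} profile $\boldsymbol\beta^N$, and there is no LLN mechanism that forces it towards $\int x^\alpha p_{\hat a}(dx)$ when the strategies may be strongly correlated.

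The paper closes this gap by linearizing at a different point and arranging the terms so that the LLN is applied only to the i.i.d.\ MFC-optimal copies, while the arbitrary $\boldsymbol\beta^N$ enters only through quantities that are merely bounded. Concretely, one writes, for each $i$, using joint concavity of $\pi$ at $(X^{\hat\nu^i}_t,\hat\theta)$ and adding and subtracting $\lambda(\hat\theta)(X^{\hat\nu^i}_t-X^{\beta^i}_t)$,
\[
V-\J^N(\beta^{i},\boldsymbol\beta^{-i,N})\ \ge\ \underbrace{\limsup_T\frac1T\E\Big[\int_0^T\big(\pi_x(X^{\hat\nu^i}_t,\hat\theta)+\lambda(\hat\theta)\big)(X^{\hat\nu^i}_t-X^{\beta^i}_t)\,dt-q(\hat\nu^i_T-\beta^i_T)\Big]}_{\ge 0\ \text{by the first-order condition, Lemma \ref{lemma:first_order_condition}}}
\ +\ R^i_N,
\]
where $R^i_N$ collects $-\lambda(\hat\theta)(X^{\hat\nu^i}_t-X^{\beta^i}_t)$ and $\pi_\theta(X^{\hat\nu^i}_t,\hat\theta)(\hat\theta-\theta^{N,\boldsymbol\beta^{-i}}_t)$. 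Averaging over $i$ and using the identity $\lambda(\hat\theta)=\lim_T\frac1T\E\int_0^T\pi_\theta(X^{\hat\nu^i}_t,\hat\theta)dt$ from Lemma \ref{central_planner:lemma:envelope_theorem} (together with $\E[X^{\hat\nu^i}_\infty]=\hat\theta$), the averaged residual reorganizes into the product
\[
\Big(\frac1N\sum_{i}\big(\pi_\theta(X^{\hat\nu^i}_t,\hat\theta)-\lambda(\hat\theta)\big)\Big)\Big(\frac1N\sum_{j}\big(\hat\theta-X^{\beta^j}_t\big)\Big)
\]
plus a term of order $1/N$. The first factor is an average of \emph{centered i.i.d.} random variables (this is exactly what Lemma \ref{central_planner:lemma:envelope_theorem} gives), so its time-averaged $L^2$ norm is $O(N^{-1/2})$; the second factor is uniformly bounded in time-averaged $L^2$ by the $\kappa$-constraint \eqref{central_planner:admisible:bound_momenti}. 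Cauchy--Schwarz then makes the whole residual vanish uniformly over $\A^{cp,\kappa}_N$. The point you were missing is that the Lagrange multiplier, combined with the first-order necessary condition for $\hat\nu$, lets you shift the LLN entirely onto the i.i.d.\ MFC copies $(X^{\hat\nu^i})_i$ and away from the unknown $\boldsymbol\beta^N$.
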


\begin{proof}
Notice that, for every $N \geq 2$ and $\kappa$ large enough, $\boldsymbol{\hat{\nu}}^{N}$ belongs to $\A^{cp,\kappa}_N$.
By \cite[Lemmata 23.17-19]{kallenberg_foundations}, the $N$-dimensional process $(X^{\hat{\nu}^i})_{i = 1}^N$ is a positively recurrent regular diffusion with ergodic measure $\bigotimes_{i = 1}^N \hat{p}_\infty(dx_i)$, where we set $\hat{p}_\infty = p_{\hat{a}}$, with $p_a$ given by \eqref{gemetric_bm:reflected:density} and $\hat{a}$ given by \eqref{mfc:optimal_reflection}.
Finally, up to choosing $\kappa$ large enough, the processes $(X^{\hat{\nu}^i})_{i = 1}^N$ satisfy \eqref{central_planner:admisible:bound_momenti} by point \ref{geometric_bm:reflected:lemma:integrability} of Lemma \ref{geometric_bm:reflected:lemma}.

\smallskip
We show that
\begin{align*}
    & \lim_{N \to \infty} \inf_{\boldsymbol{\beta}^{(N)} \in \A^{cp,\kappa}_N } \left( \J(\hat{\nu},\E[X^{\hat{\nu}}_\infty]) - \Bar{\J}^N(\boldsymbol{\beta}^{(N)})\right) \geq 0, && \lim_{N \to \infty} \left( \J(\hat{\nu},\E[X^{\hat{\nu}}_\infty]) - \Bar{\J}^N(\boldsymbol{\nu}^{(N)})\right) = 0.
\end{align*}
Observe that, by Lemma \ref{geometric_bm:reflected:lemma}, the inferior limit in the definition of $\J(\hat{\nu}^i,\E[X^{\hat{\nu}^i}_\infty])$ is actually a limit.
Then, by using the inequalities $\limsup_{n} z_n -\liminf_n x_n \geq \limsup_{n}(z_n - y_n)$ and $\limsup_{n}(z_n + y_n) \geq \limsup_n z_n + \liminf_n (y_n)$ and concavity of $\pi(x,\theta) = x^{\alpha}\theta^\beta$ jointly in $(x,\theta)$, it holds
\begin{equation}\label{N_agents:thm:inequalities1}
\begin{aligned}
    & \J(\hat{\nu}^i,\E[X^{\hat{\nu}^i}_\infty]) - \J^{N}(\beta^{i,N},\boldsymbol{\beta}^{-i,N}) \geq \limsup_{T \uparrow \infty} \frac{1}{T} \E\left[ \int_0^T \left( \pi(X^{\hat{\nu}^i}_t,\hat{\theta}) - \pi(X^{\beta^{i,N}}_t,\theta^{N,\boldsymbol{\beta}^{-i,N}}_t) \right)dt - q(\hat{\nu}^i_T - \beta^{i,N}_T) \right] \\
    & \geq \limsup_{T \uparrow \infty}\frac{1}{T} \E \left[ \int_0^T  \left( \pi_x(X^{\hat{\nu}^i}_t,\hat{\theta}) + \lambda(\hat{\theta})\right)( X^{\hat{\nu}^i}_t - X^{\beta^{i,N}}_t ) dt - q(\hat{\nu}^i_T - \beta^{i,N}_T) \right]  \\
    & \; + \liminf_{T \uparrow \infty}\frac{1}{T} \E \left[ \int_0^T  \left( -\lambda(\hat{\theta})( X^{\hat{\nu}^i}_t - X^{\beta^{i,N}}_t ) + \pi_\theta(X^{\hat{\nu}^i}_t,\hat{\theta})(\hat{\theta} - \theta^{N,\boldsymbol{\beta}^{-i,N}}_t) \right) dt \right] \\
    & \geq \liminf_{T \uparrow \infty}\frac{1}{T} \E \left[ \int_0^T  -\lambda(\hat{\theta})( X^{\hat{\nu}^i}_t - X^{\beta^{i,N}}_t ) dt \right]  + \liminf_{T \uparrow \infty}\frac{1}{T} \E \left[\int_0^T \pi_\theta(X^{\hat{\nu}^i}_t,\hat{\theta})(\hat{\theta} - \theta^{N,\boldsymbol{\beta}^{-i,N}}_t) dt \right]
\end{aligned}
\end{equation}
where we added and subtracted $\lambda(\hat{\theta})(X^{\hat{\nu}^i}_t - X^{\beta^i}_t)$ inside the time integral.
Last inequality follows from sublinearity of the $\liminf$ and from Lemma \ref{lemma:first_order_condition}, as the pair $(X^{\hat{\nu}^i},\hat{\nu}^i)$ has the same distribution as $(X^{\hat{\nu}},\hat{\nu})$.
Moreover, since $(X^{\hat{\nu}^i})_{i \geq 1}$ are i.i.d. copies of $X^{\hat{\nu}}$, by Lemma \ref{central_planner:lemma:envelope_theorem}, it holds
\begin{equation}\label{eq:probabilistic_representation}
\lambda(\hat{\theta}) = \lim_{T \uparrow \infty} \frac{1}{T}\E\left[\int_0^T \pi_\theta(X^{\hat{\nu}^i}_t,\hat{\theta})dt \right] \quad \forall i \geq 1.
\end{equation}
By using the identity $\frac{1}{N-1}\sum_{j \neq i} y_j = \frac{N}{N-1}(\frac{1}{N}\sum_{i = 1}^N y_j - \frac{1}{N}y_i)$, the ergodicity of all involved processes and equation \eqref{eq:probabilistic_representation}, we get
\begin{align*}
    \liminf_{T \uparrow \infty}\frac{1}{T} \E \bigg[\int_0^T \pi_\theta(X^{\hat{\nu}^i}_t,\hat{\theta})(\hat{\theta} - \theta^{N,\boldsymbol{\beta}^{-i,N}}_t) dt \bigg] & = \frac{N}{N-1}\liminf_{T \uparrow \infty}\frac{1}{T} \E \bigg[\int_0^T \pi_\theta(X^{\hat{\nu}^i}_t,\hat{\theta})\Big(\frac{1}{N}\sum_{j = 1}^N (\hat{\theta} - X_t^{\beta^{j,N}}) \Big) dt \bigg] \\
    & - \frac{1}{N-1}\liminf_{T \uparrow \infty}\frac{1}{T} \E \bigg[\int_0^T \pi_\theta(X^{\hat{\nu}^i}_t,\hat{\theta})(\hat{\theta} - X_t^{\beta^{-i,N}}) dt \bigg], \\
    \liminf_{T \uparrow \infty}\frac{1}{T} \E \bigg[ \int_0^T  -\lambda(\hat{\theta})( X^{\hat{\nu}^i}_t - X^{\beta^{i,N}}_t ) dt \bigg] &  = \frac{N}{N-1}\liminf_{T \uparrow \infty}\frac{1}{T} \E \bigg[ \int_0^T  -\lambda(\hat{\theta})( \hat{\theta} - X^{\beta^{i,N}}_t ) dt \bigg] \\
    & -\frac{1}{N-1}\liminf_{T \uparrow \infty}\frac{1}{T} \E \bigg[ \int_0^T  -\lambda(\hat{\theta})( \hat{\theta} - X^{\beta^{i,N}}_t ) dt \bigg].
\end{align*}
Thus, by taking the average over $i=1,\dots,N$, recalling that $(X^{\hat{\nu}^i})_{i=1}^N$ are identically distributed as $X^{\hat{\nu}}$, we get
\begin{align*}
    & \J(\hat{\nu},\E[X^{\hat{\nu}}_\infty]) - \Bar{\J}^N(\boldsymbol{\beta}^{(N)}) \geq \frac{N}{N-1} \liminf_{T \uparrow \infty} \frac{1}{T} \int_0^T \E \bigg[ \Big( \frac{1}{N}\sum_{i=1}^N ( \pi_\theta(X^{\hat{\nu}^i}_t,\hat{\theta}) -\lambda(\hat{\theta}) )   \Big) \Big( \frac{1}{N}\sum_{j=1}^N (\hat{\theta}-X^{\beta^{j,N}}_t)  \Big) \\
    & \quad - \frac{1}{N-1}\frac{1}{N} \sum_{i=1}^N (\pi_\theta(X^{\hat{\nu}^i}_t,\hat{\theta}) -\lambda(\hat{\theta}))( \hat{\theta}-X^{\beta^{i,N}}_t ) \bigg]dt \\
    & \geq - c\Bigg( \limsup_{T \uparrow \infty} \frac{1}{T} \E \int_0^T \bigg[ \Big\vert \frac{1}{N}\sum_{i=1}^N ( \pi_\theta(X^{\hat{\nu}^i}_t,\hat{\theta}) -\lambda(\hat{\theta}) )   \Big\vert \Big\vert \frac{1}{N}\sum_{j=1}^N (\hat{\theta}-X^{\beta^{j,N}}_t)  \Big\vert \bigg]dt \\
    & \quad + \frac{1}{N-1} \limsup_{T \uparrow \infty} \frac{1}{T}  \int_0^T \E\bigg[ \Big\vert \frac{1}{N}\sum_{i=1}^N (\pi_\theta(X^{\hat{\nu}^i}_t,\hat{\theta}) -\lambda(\hat{\theta}))( \hat{\theta}-X^{\beta^{i,N}}_t ) \Big\vert \bigg]dt \Bigg).
\end{align*}
We study separately the two $\limsup$.
As for the the second one, by taking advantage the exchangeability of $(X^{\hat{\nu}^i})_{i \geq 1}$ and the uniform bound \eqref{central_planner:admisible:bound_momenti} to handle $(X^{\beta^{i,N}})_{i=1}^N$, we get
\begin{align*}
    & \frac{1}{N} \limsup_{T \uparrow \infty} \frac{1}{T}  \int_0^T \E\bigg[ \Big\vert \frac{1}{N}\sum_{i=1}^N (\pi_\theta(X^{\hat{\nu}^i}_t,\hat{\theta}) -\lambda(\hat{\theta}))( \hat{\theta}-X^{\beta^{i,N}}_t ) \Big\vert \bigg]dt \\
    & \leq \frac{1}{N} \limsup_{T \uparrow \infty} \bigg(\frac{1}{N}\sum_{i=1}^N  \frac{1}{T} \int_0^T \E\bigg[ \Big\vert \pi_\theta(X^{\hat{\nu}^i}_t,\hat{\theta}) -\lambda(\hat{\theta}) \Big\vert^2 \bigg] dt \bigg)^{\frac{1}{2}} \bigg(\frac{1}{N}\sum_{i=1}^N \frac{1}{T} \int_0^T \E\bigg[ \Big\vert \hat{\theta} - X^{\beta^{i,N}}_t \Big\vert^2 \bigg] dt \bigg)^{\frac{1}{2}} \\
    & \leq \frac{c}{N} \limsup_{T \uparrow \infty} \bigg( \frac{1}{T} \int_0^T \E\bigg[ \Big\vert \pi_\theta(X^{\hat{\nu}}_t,\hat{\theta}) -\lambda(\hat{\theta}) \Big\vert^2 \bigg] dt \bigg)^{\frac{1}{2}} \leq \frac{c}{N},
\end{align*}
and then it goes to $0$ as $N \to \infty$.
As for the first term, by analogous computations, we get
\begin{align*}
    & \limsup_{T \uparrow \infty} \frac{1}{T} \E \int_0^T \bigg[ \Big\vert \frac{1}{N}\sum_{i=1}^N ( \pi_\theta(X^{\hat{\nu}^i}_t,\hat{\theta}) -\lambda(\hat{\theta}) )   \Big\vert \Big\vert \frac{1}{N}\sum_{j=1}^N (\hat{\theta}-X^{\beta^{j,N}})  \Big\vert \bigg]dt \\
    & \leq \limsup_{T \uparrow \infty} \bigg( \frac{1}{T} \E \int_0^T \bigg[ \Big \vert \frac{1}{N}\sum_{i=1}^N ( \pi_\theta(X^{\hat{\nu}^i}_t,\hat{\theta}) -\lambda(\hat{\theta}) ) \Big\vert^2  \bigg]dt \bigg)^\frac{1}{2} \bigg( \frac{1}{T}\int_0^T  \frac{1}{N}\sum_{j=1}^N \E\bigg[ \Big\vert\hat{\theta}-X_t^{\beta^{j,N}} \Big\vert^{2} \bigg ]dt\bigg)^\frac{1}{2} \\
    & \leq c \limsup_{T \uparrow \infty} \bigg( \frac{1}{T} \E \int_0^T \bigg[ \Big \vert \frac{1}{N}\sum_{i=1}^N ( \pi_\theta(X^{\hat{\nu}^i}_t,\hat{\theta}) -\lambda(\hat{\theta}) ) \Big\vert^2  \bigg]dt \bigg)^\frac{1}{2},
\end{align*}
for a constant $c$ independent of $N$, by using the bound \eqref{central_planner:admisible:bound_momenti} and exchangeability of $ \boldsymbol{\beta}^{(N)} \in \A^{cp,\kappa}_N$.
By the ergodic ratio theorem, it holds 
\begin{equation}\label{central_planner:ergodic_ratio_pointwise}
    \lim_{T \uparrow \infty} \frac{1}{T} \int_0^T  \Big\vert \frac{1}{N}\sum_{i=1}^N \left( \pi_\theta(X^{\hat{\nu}^i}_t,\hat{\theta}) -\lambda(\hat{\theta}) \right) \Big\vert^2  dt = \int_{\R_+^{N}} \Big\vert \frac{1}{N}\sum_{i=1}^N \left( \pi_\theta(x_i,\hat{\theta}) -\lambda(\hat{\theta})  \right) \Big\vert^2 \bigotimes_{i=1}^N \hat{p}_\infty(dx_i),
\end{equation}
$\prob$-a.s. We show that the right hand-side is uniformly integrable.
Take $r = \sfrac{1}{\alpha} > 1$.
By Jensen inequality and identical distribution of $(X^{\hat{\nu}^i})_{i \geq 1}$, we have
\begin{align*}
    & \E\left[\left( \frac{1}{T} \int_0^T  \Big\vert \frac{1}{N}\sum_{i=1}^N \left( \pi_\theta(X^{\hat{\nu}^i}_t,\hat{\theta}) -\lambda(\hat{\theta}) \right) \Big\vert^2 dt \right)^r \right] \leq \frac{C}{T} \E\left[ \int_0^T  \Big\vert \Big( \frac{1}{N}\sum_{i=1}^N \pi_\theta(X^{\hat{\nu}^i}_t,\hat{\theta}) -\lambda(\hat{\theta}) \Big) \Big\vert^{2r} dt \right] \\
    & \leq C \left( \vert \lambda(\hat{\theta})\vert^{2r} + \frac{1}{N} \sum_{i=1}^N  \frac{1}{T} \int_0^T  \E\left[ \vert \pi_\theta(X^{\hat{\nu}^i}_t,\hat{\theta}) \vert^{2r} \right] dt  \right)  \leq  C \left( 1 + \frac{1}{T} \int_0^T  \E\left[ \vert \pi_\theta(X^{\hat{\nu}}_t,\hat{\theta}) \vert^{2r} \right] dt \right)  \\
    & \leq C \left( 1 + \frac{1}{T} \int_0^T \E\left[\vert (X^{\hat{\nu}}_t)^\alpha  \vert^{2r} \right] dt \right) = C \left( 1 + \frac{1}{T} \int_0^T \E\left[  \vert (X^{\hat{\nu}}_t) \vert^{2}\right] dt \right) \leq C(1 + \hat{a}^2),
\end{align*}
where we used Lemma \ref{geometric_bm:reflected:lemma} in the last estimate.
Since last estimate holds for any $T>0$ and $r>1$, we deduce that the right hand-side of \eqref{central_planner:ergodic_ratio_pointwise} is uniformly integrable.
By, e.g., \cite[Lemma 4.12]{kallenberg_foundations}) this yields
\begin{align*}
    & \lim_{T \uparrow \infty} \frac{1}{T} \E \bigg[ \int_0^T  \Big\vert \frac{1}{N}\sum_{i=1}^N \left( \pi_\theta(X^{\hat{\nu}^i}_t,\hat{\theta}) -\lambda(\hat{\theta}) \right) \Big\vert^2 dt \bigg] = \int_{\R_+^{N}} \Big\vert \frac{1}{N}\sum_{i=1}^N \left( \pi_\theta(x_i,\hat{\theta}) -\lambda(\hat{\theta})  \right) \Big\vert^2 \bigotimes_{i=1}^N \hat{p}_\infty(dx_i).
\end{align*}
We conclude by invoking the law of large numbers:
let $(X_i)_{i \geq 1}$ be a sequence of i.i.d. random variables with law $\hat{p}_\infty(dx)$.
Therefore,  by Lemmata \ref{geometric_bm:reflected:lemma} and \ref{central_planner:lemma:envelope_theorem}, the sequence $(Y_i)_{i \geq 1}$ defined by $Y_i = \pi_\theta(X_i,\hat{\theta}) -\lambda(\hat{\theta})$ is i.i.d., square integrable and centered, which yields
\begin{align*}
    & \int_{\R_+^{N-1}} \Big\vert \frac{1}{N}\sum_{i=1}^N \left( \pi_\theta(x_i,\hat{\theta}) -\lambda(\hat{\theta}) \right) \Big\vert^2 \bigotimes_{i= 1}^N \hat{p}_\infty(dx_i) = \E\left[ \Big\vert \frac{1}{N}\sum_{i = 1}^{N} Y_i \Big\vert^2 \right] = \frac{\E[\vert Y_1] \vert ^2}{N} \to 0.
\end{align*}

\smallskip
Finally, $\lim_{N \to \infty} \Bar{\J}^N(\boldsymbol{\nu}^{(N)}) = \J(\hat{\nu},\E[X^{\hat{\nu}}_\infty])$ follows form ergodicity of the processes $(X^{\hat{\nu}^i})_{i=1}^N$ and the law of large numbers, analogously as before.
\end{proof}

\section{Competitive Case: Mean-field Equilibria and Approximation}\label{sec:competitive}
In this section, we focus on coarse correlated equilibria for the MFG, as defined by Definition \ref{def:CCE}.
Since the set of CCEs is typically very wide and it is difficult to characterize in a continuous time setting, following the procedure outlined in \cite{campi2023LQ}, we restrict our analysis to specific classes of correlated stationary strategies for which we are able to state a sufficient condition for being a CCE for the ergodic MFG.
With respect to \cite{campi2023LQ}, we move a step forward, and show that every CCE in these classes induces a sequence of approximate CCEs in the underlying $N$-player game with vanishing error.
More specifically, we establish the following:
\begin{itemize}
    \item We fix a correlated stationary strategy $(Z,\lambda,\theta_\infty)$.
    We suppose that the representative player decides to ignore the moderator's recommendation, and compute her best deviating strategy, i.e.
    \[
    U^* = \mathrm{arg}\max_{\nu \in \A_{mf}}\J(\nu,\theta_\infty).
    \]
    This is the content of Proposition \ref{dev_player:prop:optimal_deviation}.
    
    \item We define specific classes of correlated flows $(Z,\lambda,\theta_\infty)$ so that the consistency condition \ref{cce:def:consistency} is satisfied.
    This is established in Propositions \ref{cce:regular:prop_corr_strategy} and \ref{cce:singular:prop_consistency}.
    
    \item For $(Z,\lambda,\theta_\infty)$ in such classes, we express the optimality condition \ref{cce:def:optimality} as an inequality involving the law of $\theta_\infty$ only, thus deriving a sufficient condition for the existence of CCEs. This is the content of Propositions \ref{cce:regular:prop_optimality} and \ref{cce:singular:prop_optimality}.

    \item We show that every mean-field CCEs in each class induces a sequence of aprroximate CCEs in the underlying $N$-player game with vanishing error.
    This is the content of Theorems \ref{N_player:regular:thm_approximation} and \ref{N_player:singular:thm_approximation}.

\end{itemize}
We consider two classes of correlated stationary strategies: while in both classes the correlating device is the random mean $\theta_\infty$ itself, in the first class the recommendation $\lambda^r$ is a $\sigma(\theta_\infty)$-measurable regular control, while in the second one, the recommendation $\lambda^s$ is a policy of reflection type at a random barrier $a(\theta_\infty)$.
Surprisingly, the sufficient condition of the two classes differ only by a constant.
Moreover, in Theorem \ref{mfg:thm:nash_eq}, we explicitly characterize existence and uniqueness of Nash equilibria. We find that they belong to class of CCEs with recommendation of reflection type, so that the same approximation result applies.

\subsection{The Deviating Player Problem}
Suppose that the representative player decides to ignore the moderator's recommendation.
By definition of CCE for the ergodic MFG, the deviating player must choose her strategy $\nu \in \A_{mf}$ only by knowing the joint law of the correlated stationary strategy $(Z,\lambda,\theta_\infty)$, which is assumed to be publicly known, and not by observing its realizations.
Since $\nu \in \A_{mf}$, it follows that $X^{\nu}$ is $\mathbb{F}^{\xi,W}$-adapted as well and thus independent of the random variable $\theta_\infty$, which implies that deviating player's payoff can be written as:
\begin{equation}\label{dev_player:payoff}
\begin{aligned}
\J & (\nu,\theta_\infty) = \liminf_{T \uparrow \infty} \frac{1}{T} \left( \int_0^T \E\left[ \E[ (X^{\nu}_t)^\alpha \theta_\infty^{\beta} \vert \F^{\xi,W}_t ] \right]dt - q\E\left[ \E[\nu_T \vert \F^{\xi,W}_T] \right] \right) \\
& = \liminf_{T \uparrow \infty}\frac{1}{T}\E\left[ \int_0^T (X^{\nu}_t)^\alpha \E[\theta_\infty^{\beta}]dt - q \nu_T \right].
\end{aligned}
\end{equation}
Observe that deviating player's payoff functional depends on $\theta_\infty$ only through its expectation.

\begin{prop}\label{dev_player:prop:optimal_deviation}
There exists a unique optimal strategy for the deviating player $U^* \in \A_{mf}$ which reflects the process $X^{U^*}$ upwards \'{a} la Skorohod at the level $a^*$, where $a^*$ is given by
\begin{equation}\label{dev_player:opt_reflection_boundary}
    a^* = \left( \frac{2\alpha}{q(2\delta + \sigma^2(1 - \alpha))} \right)^{\frac{1}{1-\alpha}}(\E[\theta_\infty^{\beta}])^{\frac{1}{1- \alpha}}.
\end{equation}
\end{prop}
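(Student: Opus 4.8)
The plan is to recognize the deviating player's problem as a direct instance of the auxiliary ergodic control problem already solved in Lemma~\ref{lemma:ergodic_control}, with the Lagrange multiplier turned off. By the reduction in \eqref{dev_player:payoff}, for every $\nu \in \A_{mf}$ the deviating player's reward equals $\Tilde{\J}(\nu, \E[\theta_\infty^\beta], 0)$, where $\Tilde{\J}$ is the functional in \eqref{lemma:ergodic_control:payoff}; indeed the running profit $g$ of \eqref{control:auxiliary:running_cost} with $p = \E[\theta_\infty^\beta]$ and $\lambda = 0$ is exactly $x^\alpha \E[\theta_\infty^\beta]$. It therefore suffices to set $p := \E[\theta_\infty^\beta]$ and $\lambda := 0$ and to appeal to Lemma~\ref{lemma:ergodic_control}.

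First I would check its two hypotheses. The requirement $q\delta - \lambda > 0$ is immediate for $\lambda = 0$, since $q, \delta > 0$; the requirement $p > 0$ holds as soon as $\theta_\infty$ is not a.s.\ zero, so that $\E[\theta_\infty^\beta] > 0$. Lemma~\ref{lemma:ergodic_control} then produces an optimal control of reflection type at the barrier $a^*(p, 0)$ given by \eqref{lemma:ergodic_control:barrier}. Setting $\lambda = 0$ makes the factor $q\delta$ in the denominator cancel the $\delta$ in the numerator, and substituting $p = \E[\theta_\infty^\beta]$ collapses the expression to exactly \eqref{dev_player:opt_reflection_boundary}, which is the claimed value of $a^*$.

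A point that needs care is that Lemma~\ref{lemma:ergodic_control} optimizes over the full class $\A$, whereas here the admissible deviations are constrained to the smaller class $\A_{mf}$. I would dispose of this by recalling from Lemma~\ref{geometric_bm:reflected:lemma} the explicit representation of the reflection policy $\nu^{a^*}$, which is a pathwise functional of the uncontrolled process $X^0$ alone; hence $\nu^{a^*}$ is $\mathbb{F}^{\xi,W}$-progressively measurable and belongs to $\A_{mf} \subseteq \A$. Since $\nu^{a^*}$ maximizes $\Tilde{\J}(\cdot, p, 0)$ over the larger set $\A$ and already lies in $\A_{mf}$, it is a fortiori optimal within $\A_{mf}$, and no admissible deviation can do better.

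For uniqueness I would invoke strict concavity of $\nu \mapsto \Tilde{\J}(\nu, p, 0)$: the state obeys $X^\nu_t = X^0_t(\xi + \int_0^t (X^0_s)^{-1}\,d\nu_s)$, so $X^\nu$ depends affinely on $\nu$, while $x \mapsto x^\alpha$ is strictly concave for $\alpha \in (0,1)$ and the cost term $-q\nu_T$ is linear; two distinct admissible maximizers would then contradict strict concavity. I expect the only mild obstacles to be the bookkeeping in the $\A$-to-$\A_{mf}$ transfer and the propagation of strict concavity through the ergodic $\liminf$, but both are routine once the identification with Lemma~\ref{lemma:ergodic_control} has been made; the substantive work was already carried out in \eqref{dev_player:payoff} and in Lemma~\ref{lemma:ergodic_control}.
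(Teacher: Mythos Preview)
Your proposal is correct and takes essentially the same approach as the paper: reduce via \eqref{dev_player:payoff} and apply Lemma~\ref{lemma:ergodic_control} with $p = \E[\theta_\infty^\beta]$ and $\lambda = 0$. You are in fact more thorough than the paper's two-line proof, explicitly checking the hypotheses, addressing the $\A$-versus-$\A_{mf}$ measurability of the reflecting policy, and justifying the uniqueness claim via strict concavity, all of which the paper leaves implicit.
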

\begin{proof}
Since the payoff functional of the deviating player is given by \eqref{dev_player:payoff}, it is enough to apply Lemma \ref{lemma:ergodic_control} with $\lambda = 0$ and $p = \E[\theta_\infty^{\beta}]$.
\end{proof}

In the following, we set
\begin{equation}\label{dev_player:constant_K}
K :=  \tonde{ \frac{2\alpha}{q(2\delta + \sigma^2(1 - \alpha))}}^{\frac{1}{1-\alpha}},
\end{equation}
so that the optimal policy of the deviating player is the reflection at the level $a^* = K (\E[\theta_\infty^\beta])^{\sfrac{1}{1-\alpha}}$.

\subsection{Regular recommendation}
Let $\mathcal{G}^r$ be the set of correlated stationary strategies $(Z,\lambda^r,\theta_\infty)$ so that $\theta_\infty \in L^2(\F_{0^{-}})$ independent of $\xi$ and $W$, $Z=\theta_\infty$ and 
\begin{equation}\label{cce:regular:control}
d\lambda^r_t = \delta \theta_\infty dt, \quad t \geq 0,
\end{equation}
so that, in particular, the recommended strategy $\lambda^r$ is a $\sigma(\theta_\infty)$-measurable regular control.

\begin{prop}\label{cce:regular:prop_corr_strategy}
Let $(Z,\theta_\infty,\lambda^r) \in \mathcal{G}^r$.
Define
\begin{equation}\label{inv_gamma:speed_measure}
    m'_{\infty,\theta}(x) = \frac{2}{\sigma^2}\exp\tonde{\frac{2\delta \theta}{\sigma^2}}x^{-\frac{2\delta}{\sigma^2} - 2}\exp\tonde{-\frac{2\delta}{\sigma^2} \frac{\theta}{x}},
\end{equation}
and let $p^r_\infty(dx,\theta)$ be the stochastic kernel from $\R_+$ to $\mathcal{B}_{\R_+}$ defined by
\begin{equation}\label{cce:regular:limit_kernel}
p^r_\infty(dx,\theta)= \frac{x^{ -\frac{2\delta}{\sigma^2}  - 2} \exp\tonde{ -\frac{2\delta}{\sigma^2} \frac{\theta}{x} } }{\int_0^\infty y^{ -\frac{2\delta}{\sigma^2} -2  } \exp\tonde{ -\frac{2\delta}{\sigma^2} \frac{\theta}{y} } dy } dx.
\end{equation}
Then, the triple $(Z,\theta_\infty,\lambda^r)$ is a correlated stationary strategy so that the consistency condition \eqref{cce:cons_eq} is satisfied.
In particular, it holds $\mu^r_\infty(dx,d\theta) = \lim_{t \to \infty}\prob\circ(X^{\lambda^r}_t,\theta_\infty)^{-1} = p^r_\infty(dx,\theta)\rho(d\theta)$.
\end{prop}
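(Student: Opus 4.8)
The plan is to freeze the random reversion level $\theta_\infty$ and thereby reduce the claim to a statement about a single time-homogeneous one-dimensional diffusion, exploiting the independence of $\theta_\infty$ from $(\xi,W)$ guaranteed by $(Z,\theta_\infty,\lambda^r)\in\mathcal{G}^r$. First I would check that the triple is genuinely a correlated stationary strategy in the sense of Definition \ref{def:correlated_strategy}: $Z=\theta_\infty$ is a correlation device by assumption, $\theta_\infty$ is $\sigma(Z)$-measurable, and $\lambda^r_t=\delta\theta_\infty t$ is continuous, nondecreasing, null at $0^-$, adapted to $(\sigma(Z)\vee\F^{\xi,W}_t)_t$, with $\E[\lambda^r_T]=\delta T\,\E[\theta_\infty]<\infty$ since $\theta_\infty\in L^2$; hence $\lambda^r\in\A$. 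Substituting $d\lambda^r_t=\delta\theta_\infty\,dt$ into \eqref{mf:dynamics} shows that the recommended state solves $dX^{\lambda^r}_t=-\delta(X^{\lambda^r}_t-\theta_\infty)\,dt+\sigma X^{\lambda^r}_t\,dW_t$, a mean-reverting geometric diffusion whose reversion level is the random variable $\theta_\infty$.

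Conditionally on $\{\theta_\infty=\theta\}$, this is the diffusion on $(0,\infty)$ with drift $b(x)=-\delta(x-\theta)$ and volatility $\sigma x$. I would then invoke the classical scale/speed classification of one-dimensional diffusions (see, e.g., \cite{kallenberg_foundations}): the scale density is $s'(x)=\exp\left(-\int^x \frac{2b(y)}{\sigma^2 y^2}\,dy\right)=x^{2\delta/\sigma^2}\exp\left(\frac{2\delta\theta}{\sigma^2 x}\right)$, and the speed density $m'(x)=2\left(\sigma^2 x^2 s'(x)\right)^{-1}$ matches $m'_{\infty,\theta}$ in \eqref{inv_gamma:speed_measure} up to the $\theta$-dependent constant $\exp(2\delta\theta/\sigma^2)$. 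A short boundary analysis, namely $s(0^+)=-\infty$ and $s(+\infty)=+\infty$, while the exponential factor kills the singularity of $m'_{\infty,\theta}$ at the origin and the tail $x^{-2\delta/\sigma^2-2}$ is integrable at infinity so that the total speed measure is finite, shows that both endpoints are natural and the diffusion is positively recurrent. Its unique stationary law is the normalisation of $m'_{\infty,\theta}$, i.e.\ exactly $p^r_\infty(dx,\theta)$ of \eqref{cce:regular:limit_kernel} (the constant cancelling), which one recognises as an inverse-Gamma law with shape $2\delta/\sigma^2+1$ and scale $2\delta\theta/\sigma^2$.

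The consistency condition then follows from a single moment identity: the first moment of $p^r_\infty(\cdot,\theta)$ equals $\theta$. I would obtain this by testing stationarity against the identity map: with generator $\mathcal{L}f=-\delta(x-\theta)f'+\tfrac12\sigma^2 x^2 f''$ and $f(x)=x$ one has $\mathcal{L}f(x)=-\delta(x-\theta)$, and $\int_{\R_+}\mathcal{L}f\,dp^r_\infty(\cdot,\theta)=0$ for the stationary measure, whence $\int_{\R_+}x\,p^r_\infty(dx,\theta)=\theta$; the required integrability is ensured since Assumption \ref{assumption:dissipativity} gives $2\delta/\sigma^2>1$, so the inverse-Gamma law has finite first (indeed second) moment. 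Finally I would upgrade the conditional convergence to the joint one: since $\theta_\infty$ is independent of $(\xi,W)$ and constant in time, for bounded continuous $\phi$ one writes $\E[\phi(X^{\lambda^r}_t,\theta_\infty)]=\int_{\R_+}\E[\phi(X^{\lambda^r,\theta}_t,\theta)]\,\rho(d\theta)$ with $X^{\lambda^r,\theta}$ the frozen diffusion, and letting $t\to\infty$ with dominated convergence gives $\iint\phi(x,\theta)\,p^r_\infty(dx,\theta)\rho(d\theta)$. This identifies $\mu^r_\infty(dx,d\theta)=p^r_\infty(dx,\theta)\rho(d\theta)$ in the weak sense, and substituting $\theta=\theta_\infty$ into the moment identity yields the consistency relation \eqref{cce:cons_eq}. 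The main obstacle is this last passage from conditional to joint weak convergence: it requires a careful conditioning (freezing) argument together with justification of the limit interchange, for which the uniform moment bounds afforded by Assumption \ref{assumption:dissipativity} are the essential input, whereas the speed-measure computation and the mean identity are routine.
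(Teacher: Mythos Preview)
Your approach is correct and essentially the same as the paper's: freeze $\theta_\infty=\theta$, identify the controlled state as the one-dimensional diffusion $dX=-\delta(X-\theta)\,dt+\sigma X\,dW$, and use the speed-measure criterion to obtain positive recurrence and the stationary law $p^r_\infty(\cdot,\theta)$. The one point where you diverge is the verification of consistency \eqref{cce:cons_eq}. You compute the first moment of the stationary distribution via the generator identity $\int\mathcal{L}f\,dp^r_\infty=0$ with $f(x)=x$, which is elegant but requires justifying the identity for this unbounded $f$ (localisation or an integration-by-parts check against the explicit density). The paper instead works at finite time: it writes down the explicit conditional mean $\varphi_t(\theta)=\E[X^{\lambda^r}_t\mid\theta_\infty=\theta]=e^{-\delta t}\E[\xi]+\theta(1-e^{-\delta t})$ from the linear SDE and lets $t\to\infty$, so that $\E[X^{\lambda^r}_t\mid\theta_\infty]\to\theta_\infty$ almost surely, verifying the equivalent form \eqref{cce_cons:conditional_form} without touching the stationary measure at all. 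This is slightly more elementary and sidesteps any integrability caveat, but both routes are valid.
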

\begin{proof}
The measurability requirements on the triple $(Z,\theta_\infty,\lambda^r)$ are clearly satisfied.
Let $X^{\lambda^r}$ the state process controlled by $\lambda^r$, which satisfies 
\begin{equation}\label{cce:regular:controlled_process}
dX^{\lambda^r}_t = \delta(\theta_\infty - X^{\lambda^r}_t)dt + \sigma X^{\lambda^r}_t dW_t, \quad X^{\lambda^r}_{0} = \xi.
\end{equation}
We show that the joint law of $(X^{\lambda^r}_t,\theta_\infty)$ converges weakly to $\mu^r_\infty$ as $t \to \infty$.
To this extent, it is enough to verify that the regular conditional probability of $X^{\lambda^r}_t$ with respect to $\theta_\infty = \theta$, which we denote by $p^r_t(dx,\theta)$, converges weakly to $p^r_\infty(dx,\theta)$ as $t \to \infty$, for $\rho$-a.e. $\theta \in \R_+$.
Conditionally to $\theta_\infty = \theta$, $X^{\lambda^r}_t$ satisfies the following equation:
\[
dX^{\lambda^r,\theta}_t = \delta(\theta - X^{\lambda^r,\theta}_t )dt + \sigma X^{\lambda^r,\theta}_t dW_t, \quad X^{\lambda^r,\theta}_{0} = \xi.
\]
By Lemma \ref{lemma:inv_gamma}, we have $\int_0^\infty m'_{\infty,\theta}(x)dx < \infty$, which implies that the diffusion $X^{\lambda^r,\theta}$ is positively recurrent for every $\theta > 0$.
Thus, the measure $p^r_\infty(dx,\theta)$ is the unique stationary distribution and $p^r_t(dx,\theta) \to p^r_\infty(dx,\theta)$ in total variation norm (see, e.g., \cite[Paragraph 36]{borodin2002handbook}).
As for equality \eqref{cce:cons_eq}, define $\varphi=(\varphi_t(\theta))_{t \geq 0, \theta>0}$ as
\begin{equation}\label{cce:regular:conditional_mean}
\varphi_t(\theta) = e^{-\delta t}\E[\xi] +\theta(1 - e^{\delta t}).
\end{equation}
By It\^{o}'s formula, it follows that $\varphi_t(\theta_\infty) = \E[X^{\lambda^r}_t \vert \theta_\infty]$ for every $t\geq 0$, $\prob$-a.s., which implies that $\theta_\infty= \lim_{t \to \infty}\E[X_t \vert \theta_\infty]$ $\prob$-a.s., and therefore condition \eqref{cce_cons:conditional_form} is satisfied, and so \eqref{cce:cons_eq}.
\end{proof}

\begin{prop}\label{cce:regular:prop_optimality}
A correlated stationary strategy $(Z,\theta_\infty,\lambda^r)$ in the class $\mathcal{G}^r$
is a mean-field CCE if and only if the following inequality is satisfied:
\begin{equation}\label{cce:regular:optimality:condition_moments}
c_{\beta} (\E[\theta_\infty^{\beta}])^{\frac{1}{ 1-\alpha }} + c_1 \E[\theta_\infty] \leq  c_{\alpha+\beta} \E[\theta_\infty^{\alpha+\beta }],
\end{equation}
where $c_{\beta}$, $c_{\alpha+\beta}$ and $c_1$ are positive constants defined by
\begin{equation}\label{cce:reg_ctrl:constants}
\begin{aligned}
& c_{\beta} := \frac{(2\delta+ \sigma^2)q}{2} \tonde{ \frac{2\alpha}{q(2\delta + \sigma^2(1 - \alpha))}}^{\frac{1}{1-\alpha}}\frac{ 1 -\alpha }{\alpha},  && c_{1} := \delta q, &&& c_{\alpha+\beta} := \tonde{\frac{2\delta}{\sigma^2}}^{ \alpha }	\frac{\Gamma( \frac{2\delta}{\sigma^2} + 1 - \alpha)}{\Gamma( \frac{2\delta}{\sigma^2} + 1 )} .
\end{aligned}
\end{equation}
\end{prop}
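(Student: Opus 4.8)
The consistency condition \ref{cce:def:consistency} has already been verified for every triple in $\mathcal{G}^r$ in Proposition \ref{cce:regular:prop_corr_strategy}, so $(Z,\theta_\infty,\lambda^r)$ is a mean-field CCE if and only if the optimality condition \ref{cce:def:optimality} holds. Since Proposition \ref{dev_player:prop:optimal_deviation} exhibits $U^*$ as a maximizer of $\nu \mapsto \J(\nu,\theta_\infty)$ over $\A_{mf}$, condition \ref{cce:def:optimality} is equivalent to the single inequality $\J(\lambda^r,\theta_\infty) \geq \J(U^*,\theta_\infty)$. The plan is therefore to compute both payoffs in closed form and to rearrange the resulting inequality into \eqref{cce:regular:optimality:condition_moments}.

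For the recommendation payoff, I would first note that $\lambda^r_T = \delta\theta_\infty T$ by \eqref{cce:regular:control}, so the cost contributes the constant $\frac{1}{T}\E[q\lambda^r_T] = q\delta\E[\theta_\infty] = c_1\E[\theta_\infty]$ for every $T$. For the running profit, I would condition on $\theta_\infty = \theta$: conditionally, $X^{\lambda^r,\theta}$ is positively recurrent with unique stationary law $p^r_\infty(\cdot,\theta)$ from \eqref{cce:regular:limit_kernel}, so the ergodic theorem gives $\frac{1}{T}\int_0^T \E[(X^{\lambda^r,\theta}_t)^\alpha]\,dt \to \int_{\R_+} x^\alpha p^r_\infty(dx,\theta)$. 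The kernel $p^r_\infty(\cdot,\theta)$ is an inverse-gamma law of shape $\frac{2\delta}{\sigma^2}+1$ and scale $\frac{2\delta}{\sigma^2}\theta$, whose $\alpha$-moment equals $c_{\alpha+\beta}\,\theta^\alpha$ with $c_{\alpha+\beta}$ as in \eqref{cce:reg_ctrl:constants}. Multiplying by $\theta_\infty^\beta$ and taking expectation over $\theta_\infty$ then yields $\J(\lambda^r,\theta_\infty) = c_{\alpha+\beta}\E[\theta_\infty^{\alpha+\beta}] - c_1\E[\theta_\infty]$.

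For the deviation payoff, I would invoke Proposition \ref{dev_player:prop:optimal_deviation} together with the identity for $C(a,p)$ recorded after \eqref{alvarez:ergodic_functional}, which gives $\J(U^*,\theta_\infty) = C(a^*, \E[\theta_\infty^\beta])$ with $a^* = K(\E[\theta_\infty^\beta])^{1/(1-\alpha)}$ and $K$ as in \eqref{dev_player:constant_K}. Since $a^*$ maximizes $a \mapsto C(a,p)$ at $p=\E[\theta_\infty^\beta]$, the first-order condition $\frac{p\alpha}{2\delta+\sigma^2(1-\alpha)}(a^*)^{\alpha-1}=\frac{q}{2}$ eliminates the $a^\alpha$ term and collapses $C(a^*,p)$ to $(2\delta+\sigma^2)\frac{q}{2}\frac{1-\alpha}{\alpha}a^*$, which equals $c_\beta\,(\E[\theta_\infty^\beta])^{1/(1-\alpha)}$ by the definition of $c_\beta$. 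Substituting both expressions into $\J(\lambda^r,\theta_\infty)\geq\J(U^*,\theta_\infty)$ and moving terms across gives precisely \eqref{cce:regular:optimality:condition_moments}, the equivalence holding in both directions.

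The main technical obstacle is the justification of the interchange of the outer $\liminf_{T\uparrow\infty}$ with the expectation over $\theta_\infty$ in the running-profit term, i.e.\ upgrading the conditional (pointwise-in-$\theta$) convergence of the Cesàro averages to convergence of $\frac{1}{T}\E[\int_0^T(X^{\lambda^r}_t)^\alpha\theta_\infty^\beta\,dt]$, and confirming that the $\liminf$ is in fact a limit. I would handle this by dominated convergence: using \eqref{cce:regular:conditional_mean}, the conditional mean $\E[X^{\lambda^r}_t\mid\theta_\infty=\theta]$ is bounded by $\E[\xi]+\theta$ uniformly in $t$, so Jensen's inequality bounds $\theta^\beta\,\frac{1}{T}\int_0^T\E[(X^{\lambda^r,\theta}_t)^\alpha]\,dt$ by $\theta^\beta(\E[\xi]+\theta)^\alpha$, which is $\rho$-integrable since $\theta_\infty\in L^2(\F_{0^-})$ and $\alpha+\beta<2$. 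The remaining steps — the inverse-gamma moment computation and the algebraic simplification of $C(a^*,p)$ — are then routine.
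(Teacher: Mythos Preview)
Your proposal is correct and follows essentially the same route as the paper: reduce to the single inequality $\J(\lambda^r,\theta_\infty)\geq\J(U^*,\theta_\infty)$ via Propositions \ref{cce:regular:prop_corr_strategy} and \ref{dev_player:prop:optimal_deviation}, compute $\J(U^*,\theta_\infty)=c_\beta(\E[\theta_\infty^\beta])^{1/(1-\alpha)}$ from \eqref{alvarez:ergodic_functional}, and compute $\J(\lambda^r,\theta_\infty)$ by conditioning on $\theta_\infty$, using ergodicity toward $p^r_\infty(\cdot,\theta)$, the inverse-gamma $\alpha$-moment, and dominated convergence to pass the limit through the expectation. The only cosmetic differences are that you simplify $C(a^*,p)$ via the first-order condition rather than by direct substitution, and you use the slightly sharper dominating function $\theta^\beta(\E[\xi]+\theta)^\alpha$ (via Jensen on $x\mapsto x^\alpha$) in place of the paper's $C(1+\theta_\infty^2)$; both are valid under $\theta_\infty\in L^2$.
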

\begin{proof}
By Proposition \ref{cce:regular:prop_corr_strategy}, $(Z,\theta_\infty,\lambda^r)$ satisfies the consistency condition \eqref{cce:cons_eq}.
Let $U^*$ be the optimal control for the deviating player given by Proposition \ref{dev_player:prop:optimal_deviation}.
Since $\J(U^*,\theta_\infty) = \max_{\nu \in \A_{mf}} \J(\nu,\theta_\infty)$, we just need to verify that the inequality $\J(\lambda^r,\theta_\infty) \geq \J(U^*,\theta_\infty)$ is equivalent to \eqref{cce:regular:optimality:condition_moments}.
Since $U^*$ is a reflection policy at the level $a^*$ given by \eqref{dev_player:opt_reflection_boundary}, formulae \eqref{alvarez:ergodic_functional} and \eqref{dev_player:payoff} yield
\begin{equation}\label{dev_player:payoff:moment_m_infty}
\begin{aligned}
\J & (U^*,\theta_\infty) = C(a^*,\E[\theta_\infty^{\beta}]) \\
& = \frac{2\delta + \sigma^2 }{2\delta + \sigma^2(1 - \alpha)}\E[\theta_\infty^{\beta}]\left( K(\E[\theta_\infty^{\beta}])^{\frac{1}{1-\alpha}}\right)^\alpha - q \frac{2\delta + \sigma^2}{2} K(\E[\theta_\infty^{\beta}])^{\frac{1}{1-\alpha}} \\
& = (2\delta+ \sigma^2)\tonde{ \frac{1}{2\delta + \sigma^2(1-\alpha)}K^\alpha - q\frac{1}{2}K }(\E[\theta_\infty^{\beta}])^{\frac{1}{1-\alpha}} = c_{\beta}(\E[\theta_\infty^{\beta}])^{\frac{1}{1-\alpha}},
\end{aligned}
\end{equation}
by noticing that, with $K$ given by  \eqref{dev_player:constant_K}, it holds
\[
(2\delta+ \sigma^2)\tonde{ \frac{1}{2\delta + \sigma^2(1-\alpha)}K^{\alpha} -\frac{q}{2}K } = \frac{2\delta+ \sigma^2}{2} q K\tonde{ \frac{1 -\alpha}{\alpha} } = c_{\beta}.
\]

\medskip
As for the payoff associated to the representative player, conditionally to $\theta_\infty = \theta$ and exploiting ergodicity, it holds
\begin{equation*}
\begin{aligned}
\lim_{T \uparrow \infty} & \frac{1}{T}\E\left[ \int_0^T (X^{\lambda^r}_t)^{\alpha} \theta^\beta dt -q\lambda^r_T\big\vert \theta_\infty = \theta \right]  = \int_0^\infty \theta^{\beta} x^\alpha p^r_\infty(dx,\theta)- \delta q \theta  \\
& = \tonde{\frac{2\delta}{\sigma^2}}^{ \alpha } \frac{\Gamma( \frac{2\delta}{\sigma^2} - \alpha  + 1)}{\Gamma( \frac{2\delta}{\sigma^2} + 1 )} \theta^{\alpha +\beta } - \delta q \theta ,
\end{aligned}
\end{equation*}
where last equality follows from the definition of $p^r_\infty(dx,\theta)$ and Lemma \ref{lemma:inv_gamma}.
Moreover, by \eqref{cce:regular:conditional_mean}, we have the bound
\begin{align*}
    \frac{1}{T} & \E\left[ \int_0^T (X^{\lambda^r}_t)^{\alpha} \theta^\beta dt -q\lambda^r_T\big\vert \theta_\infty \right] \leq \frac{\theta^\beta}{T}\E\left[\int_0^T(1 + X^{\lambda^r}_t)dt \Big\vert  \theta_\infty \right] + \delta q \theta_\infty \\
    & \leq \theta^\beta \left(1 + \sup_{t \geq 0}\E\left[X^{\lambda^r}_t\Big\vert  \theta_\infty \right] \right)+ \delta q \theta_\infty \leq C(1 + \theta_\infty^2),
\end{align*}
which is integrable by assumption.
Therefore, by dominated convergence theorem, we can exchange limit and expectation to conclude
\begin{equation*}
\begin{aligned}
\J & (\lambda^r,\theta_\infty) = \lim_{T \uparrow \infty} \frac{1}{T} \E\left[ \E\left[ \int_0^T (X^{\lambda^r}_t)^{\alpha} \theta^\beta dt -q\lambda^r_T\big\vert \theta_\infty \right] \right] = \E\left[ \lim_{T \uparrow \infty}\frac{1}{T}\E\left[ \int_0^T (X^{\lambda^r}_t)^{\alpha} \theta^\beta dt -q\lambda^r_T\big\vert \theta_\infty \right] \right] \\
& = \tonde{\frac{2\delta}{\sigma^2}}^{ \alpha } \frac{\Gamma( \frac{2\delta}{\sigma^2} - \alpha  + 1)}{\Gamma( \frac{2\delta}{\sigma^2} + 1 )} \int_0^\infty \theta^{\alpha +\beta }\rho(d\theta) - \delta q \int_0^\infty \theta \rho(d\theta),
\end{aligned}
\end{equation*}
By comparing this equation with equation \eqref{dev_player:payoff:moment_m_infty}, we get equation \eqref{cce:regular:optimality:condition_moments}.
\end{proof}

\begin{remark}\label{regular:rmk_integrability}
Observe that, if $\alpha + \beta \leq 1$, for this result to hold true it is enough to require $\theta_\infty$ to be in $L^1$; Assumption \ref{assumption:dissipativity} is not needed as well.
\end{remark}

Finally, we show how to use a CCE in the class $\mathcal{G}^r$ to build a sequence $\boldsymbol{\lambda}^N$ of $\eps_N$-CCE in the $N$-player game with $\eps_N \to 0$ as $N \to \infty$.
To this extent, we consider the following set of strategies $\mathcal{B} \subseteq \A_N$:
\begin{definition}[$c$-admissible strategies]
Let $c > 0$. A strategy $\nu$ be in $\A_{N}$ is $c$-admissible if 
\begin{equation}\label{N_player:admissible_strategies:integrability}
     \limsup_{T \uparrow \infty} \frac{1}{T} \E\left[\int_0^T \vert X^{\nu}_t \vert^2 dt \right] \leq c.
\end{equation}
We denote by $\A_{N,c}$ the set of $c$-admissible strategies for the $N$-player game.
\end{definition}

Starting from a correlated stationary strategy $(Z,\theta_\infty,\lambda^r)$ in the class $\mathcal{G}^r$, we define the following correlated strategy profiles for the $N$-player game: take $Z = \theta_\infty$ as correlation device, and set, for any $i \geq 1$,
\begin{equation}\label{N_player:regular:recommendation}
    d\lambda^{i,r}_t = \delta \theta_\infty dt.
\end{equation}
Then, when player $i$ plays accordingly to moderator's suggestion, her dynamics are hold by the following equation:
\begin{equation}\label{N_player:dynamics:following_player}
    dX^{i,r}_t = \delta(\theta_\infty - X^{i,r}_t)dt + \sigma X^{i,r}_t dW^i_t, \quad X^{i,r}_0 = \xi^i.
\end{equation}
Observe that, for each $i \geq 1$, the triple $(X^{i,r},\lambda^{i,r},\theta_\infty)$ has the same law as $(X,\lambda,\theta_\infty)$.
Moreover, while not independent, the processes $(X^{i,r})_{i \geq 1}$ are conditionally independent given $\theta_\infty$.

\begin{thm}[Approximation of CCEs - regular case]\label{N_player:regular:thm_approximation}
Let $(Z,\theta_\infty,\lambda^r)$ be a CCE in the class $\mathcal{G}^r$.
Let $\boldsymbol{\lambda}^N = (\lambda^{i,r})_{i=1}^N$, with $\lambda^{i,r}$ defined by \eqref{N_player:regular:recommendation}.
Then, for any $c > 0$, the correlated strategy profile $(\theta_\infty, \boldsymbol{\lambda}^N)$ defines an $\varepsilon_N$-CCE for the $N$-player game within the set of strategies $\A_{N,c}$, with $\varepsilon_N \to 0$ as $N\to\infty$.
\end{thm}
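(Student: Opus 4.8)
The plan is to sandwich the $N$-player payoffs between the two mean-field quantities already compared by the limit CCE, the committing payoff $\J(\lambda^r,\theta_\infty)$ and the best-deviation payoff $\J(U^*,\theta_\infty)$ from Proposition \ref{dev_player:prop:optimal_deviation}, controlling both errors uniformly. Writing $\theta^{N,-i}_t:=\frac{1}{N-1}\sum_{j\ne i}X^{j,r}_t$ for the interaction term faced by player $i$, I would establish constants $\eps_N^{(1)},\eps_N^{(2)}\to0$, independent of $i$ (and, for the second, of the deviation), such that
\begin{equation*}
\J^N(\lambda^{i,r},\boldsymbol{\lambda}^{-i,r})\ge\J(\lambda^r,\theta_\infty)-\eps_N^{(1)},\qquad \sup_{\nu\in\A_{N,c}}\J^N(\nu,\boldsymbol{\lambda}^{-i,r})\le\J(U^*,\theta_\infty)+\eps_N^{(2)}.
\end{equation*}
Granting these, the mean-field optimality \ref{cce:def:optimality} gives $\J(\lambda^r,\theta_\infty)\ge\J(U^*,\theta_\infty)$, whence $\J^N(\lambda^{i,r},\boldsymbol{\lambda}^{-i,r})\ge\J^N(\nu,\boldsymbol{\lambda}^{-i,r})-\eps_N$ with $\eps_N:=\eps_N^{(1)}+\eps_N^{(2)}$ for every $\nu\in\A_{N,c}$, which is exactly the $\eps_N$-CCE property. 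The mechanism behind both estimates is that, conditionally on $\theta_\infty$, the processes $(X^{j,r})_{j\ne i}$ are i.i.d.\ copies of the solution to \eqref{N_player:dynamics:following_player}, so $\theta^{N,-i}_t$ concentrates around $\theta_\infty$ as $N\to\infty$.

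For the deviation bound I would exploit that a deviating $\nu\in\A_{N,c}\subseteq\A_N$ is $\mathbb{F}^N$-measurable, hence independent of the correlation device $Z=\theta_\infty$, so that $X^{\nu}$ is independent of $\theta_\infty$ and
\begin{equation*}
\E\bigl[(X^{\nu}_t)^\alpha(\theta^{N,-i}_t)^\beta\bigr]=\E\bigl[(X^{\nu}_t)^\alpha\bigr]\E[\theta_\infty^\beta]+\E\bigl[(X^{\nu}_t)^\alpha\bigl((\theta^{N,-i}_t)^\beta-\theta_\infty^\beta\bigr)\bigr].
\end{equation*}
Integrating in time, the first term reconstructs the functional $\Tilde{\J}(\nu,\E[\theta_\infty^\beta],0)$ of Lemma \ref{lemma:ergodic_control}, whose supremum over $\A$ (with $W^i$ as driving noise) equals $C(a^*,\E[\theta_\infty^\beta])=\J(U^*,\theta_\infty)$, as in \eqref{dev_player:payoff:moment_m_infty}. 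The remainder I bound by Cauchy--Schwarz in $(t,\omega)$, using $(X^\nu_t)^{2\alpha}\le1+(X^\nu_t)^2$ and the uniform constraint \eqref{N_player:admissible_strategies:integrability}, and using $\abs{x^\beta-y^\beta}\le\abs{x-y}^\beta$ together with Jensen to reduce $\E[\abs{(\theta^{N,-i}_t)^\beta-\theta_\infty^\beta}^2]$ to a power of $\E[\abs{\theta^{N,-i}_t-\theta_\infty}^2]$. Crucially, this $L^2$ distance is $\nu$-independent: splitting off the conditional bias $\varphi_t(\theta_\infty)-\theta_\infty=e^{-\delta t}(\E[\xi]-\theta_\infty)$ with $\varphi_t$ from \eqref{cce:regular:conditional_mean}, conditional independence and the uniform second-moment bound (Assumption \ref{assumption:dissipativity}, Lemma \ref{geometric_bm:reflected:lemma}) give $\E[\abs{\theta^{N,-i}_t-\theta_\infty}^2]\le \frac{C}{N-1}+Ce^{-2\delta t}$. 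Averaging in time and taking $\limsup_T$ annihilates the exponential and leaves $\eps_N^{(2)}=O(N^{-\beta/2})$, uniformly in $\nu$; combined with $\liminf_T(a_T+b_T)\le\liminf_T a_T+\limsup_T b_T$ this yields the upper bound.

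The committing bound is the same computation with $X^{i,r}$ replacing $X^\nu$, the difference being that $X^{i,r}$ now genuinely depends on $\theta_\infty$: the leading term is exactly the conditional expression evaluated in Proposition \ref{cce:regular:prop_optimality}, whose $\liminf_T$ equals $\J(\lambda^r,\theta_\infty)$, while the remainder is treated verbatim, using $\sup_t\E[(X^{i,r}_t)^2]<\infty$ (from the mean-reverting dynamics \eqref{N_player:dynamics:following_player} under Assumption \ref{assumption:dissipativity} and $\theta_\infty\in L^2$). Keeping the error on the favourable side via $\liminf_T(a_T+b_T)\ge\liminf_T a_T-\limsup_T\abs{b_T}$ produces $\eps_N^{(1)}\to0$, and by symmetry both constants are independent of $i$.

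I expect the main obstacle to be the uniformity of the deviation estimate over the adversarial class $\A_{N,c}$: the bound on $\sup_{\nu}\J^N(\nu,\boldsymbol{\lambda}^{-i,r})$ must not degrade as $\nu$ ranges over all $c$-admissible controls. This is precisely why the error is factored into a $\nu$-dependent piece, tamed by the single uniform moment bound \eqref{N_player:admissible_strategies:integrability}, and a $\nu$-independent piece carrying the law-of-large-numbers rate; and it is the deviator's ignorance of $Z$ that permits the factorization $\E[(X^\nu_t)^\alpha\theta_\infty^\beta]=\E[(X^\nu_t)^\alpha]\E[\theta_\infty^\beta]$ and the reduction to the single-agent problem of Lemma \ref{lemma:ergodic_control}. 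A secondary care is the order of the $T$- and $N$-limits, which is handled by retaining the explicit $\tfrac{C}{N-1}+Ce^{-2\delta t}$ dependence and averaging before letting $T\to\infty$ and then $N\to\infty$.
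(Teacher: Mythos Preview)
Your proof is correct and rests on the same three–term decomposition as the paper: compare $\J^N(\nu,\boldsymbol{\lambda}^{-i,r})$ to $\J(\nu,\theta_\infty)$, use the mean-field CCE inequality $\J(\nu,\theta_\infty)\le\J(U^*,\theta_\infty)\le\J(\lambda^r,\theta_\infty)$, and compare $\J(\lambda^r,\theta_\infty)$ back to $\J^N(\lambda^{i,r},\boldsymbol{\lambda}^{-i,r})$. The genuine difference lies in how you control the two comparison errors. The paper conditions on $\theta_\infty$, pushes the $\limsup$ inside by reverse Fatou, invokes the ergodic ratio theorem to replace the time average by an integral against the product stationary kernel $\bigotimes_{j\neq i}p^r_\infty(\cdot,\theta_\infty)$, and only then applies the law of large numbers on that static integral (these are the appendix Lemmata \ref{N_player:lemma:ergodic_limit_q_power} and \ref{N_player:regular:lemma:convergence_empirical_measures}). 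You instead bound $\E[\lvert(\theta^{N,-i}_t)^\beta-\theta_\infty^\beta\rvert^2]$ directly at each fixed $t$ via $\lvert x^\beta-y^\beta\rvert\le\lvert x-y\rvert^\beta$ and a conditional bias–variance split, obtaining the explicit nonasymptotic bound $\tfrac{C}{N-1}+Ce^{-2\delta t}$; time–averaging kills the transient and yields $\eps_N=O(N^{-\beta/2})$. This is more elementary, avoids the ergodic machinery, and delivers a quantitative rate the paper does not state; the paper's route, in exchange, does not rely on the closed form of $\varphi_t$ and would transfer more readily to recommendations without an explicit conditional mean. One small attribution slip: the uniform second-moment bound for $X^{j,r}$ you need is not Lemma \ref{geometric_bm:reflected:lemma} (which concerns the reflected diffusion) but the estimate for the mean-reverting dynamics \eqref{N_player:dynamics:following_player}, proved in the appendix as Lemma \ref{N_player:regular:lemma_integrability}; your parenthetical ``from the mean-reverting dynamics under Assumption \ref{assumption:dissipativity}'' is the correct justification.
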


\begin{proof}
For every $N \geq 2$, set 
\[
\varepsilon_N := \sup_{\nu \in \A_{N,c}} \left( \J_N(\nu,\boldsymbol{\lambda}^{-i,N}) - \J_N(\lambda^{i,r},\boldsymbol{\lambda}^{-i,N}) \right).
\]
Notice that, by symmetry, $\varepsilon_N$ is independent of $i=1,\dots,N$.
Clearly $(\theta_\infty,\boldsymbol{\lambda}^N)$ is an $\eps_N$-CCE for the $N$-player game within the set of strategies $\A_{N,c}$.
We show that $\varepsilon_N$ vanishes as $N$ goes to $\infty$.
Note that, for any $\nu$ in $\A_{N,c}$, we have
\begin{equation}\label{N_player:regular:difference_decomposition}
\begin{aligned}
\J_N(\nu,\boldsymbol{\lambda}^{-i,N}) - & \J_N(\lambda^{i,r},\boldsymbol{\lambda}^{-i,N}) = \left( \J_N(\nu,\boldsymbol{\lambda}^{-i,N})  - \J(\nu,\theta_\infty) \right) \\
& + \left( \J(\nu,\theta_\infty) - \J(\lambda^{i,r},\theta_\infty) \right) + \left( \J(\lambda^{i,r},\theta_\infty) - \J_N(\lambda^{i,r},\boldsymbol{\lambda}^{-i,N}) \right)
\end{aligned}
\end{equation}
where $\J$ is defined by \eqref{mfg:payoff}.
We treat separately each of the three terms in the right-hand side of \eqref{N_player:regular:difference_decomposition}.

\smallskip
As for the first term, by Cauchy-Schwartz inequality and using the inequality $\liminf_{n \to \infty}{\alpha_n} - \liminf_{n \to \infty}{\beta_n} \leq \limsup_{n \to \infty}(\alpha_n - \beta_n)$, we have the following estimates:
\begin{equation}\label{N_player:regular:thm:estimates1}
\begin{aligned}
    \vert & \J (\nu,\theta_\infty) - \J_N(\nu,\boldsymbol{\lambda}^{-i,N}) \vert \\
    & \leq  \limsup_{ T \uparrow \infty} \left( \frac{1}{T} \int_0^T \E\left[ (X^{i,\nu}_t)^{2\alpha} \right]dt \right)^{\frac{1}{2}} \left(\frac{1}{T}\int_0^T \E\left[ \left \vert \big( \theta^{N,\boldsymbol{\lambda}^{-i,N}}_t \big)^{\beta} - \theta_\infty^{\beta} \right\vert^2 \right]dt \right)^\frac{1}{2} \\
    & \leq  \left( 1 + \limsup_{ T \uparrow \infty} \frac{1}{T} \int_0^T \E\left[ \vert X^{i,\nu}_t \vert^2 \right]dt \right)^{\frac{1}{2}} \left( \limsup_{ T \uparrow \infty} \frac{1}{T}\int_0^T \E\left[ \E\left[ \left \vert \big( \theta^{N,\boldsymbol{\lambda}^{-i,N}}_t \big)^{\beta} - \theta_\infty^{\beta} \right\vert^2 \Big \vert \theta_\infty \right] \right]dt \right)^\frac{1}{2} \\
    & \leq   (1+c)^{\frac{1}{2}} \left( \E\left[ \limsup_{ T \uparrow \infty} \frac{1}{T}\int_0^T  \E \left[ \left \vert \big( \theta^{N,\boldsymbol{\lambda}^{-i,N}}_t \big)^{\beta} - \theta_\infty^{\beta} \right\vert^2 \Big \vert \theta_\infty\right] dt \right] \right)^\frac{1}{2},
\end{aligned}
\end{equation}
where in the last inequality we exchanged limsup and expectation by reverse Fatou's lemma with the integrable upper bound $C(1 + \theta_\infty^2)$.
Indeed, by recalling that $(X^j)_{j \neq i}$ are i.i.d. as $X$ conditionally to $\theta_\infty$, we have the following estimates
\begin{align*}
\E & \left[ \left \vert \big( \theta^{N,\boldsymbol{\lambda}^{-i,N}}_t \big)^{\beta} - \theta_\infty^{\beta} \right\vert^2 \Big \vert \theta_\infty\right] \leq C\left( 1 + \theta_\infty^2 + \frac{1}{N-1} \sum_{j \neq i} \E \left[ \vert X^{j}_t \vert^{2}  \Big\vert \theta_\infty \right] \right) \leq C(1 + \theta_\infty^2),
\end{align*}
where last inequality holds thanks to Lemma \ref{N_player:regular:lemma_integrability}.
By Lemma \ref{N_player:lemma:ergodic_limit_q_power}, we then have
\begin{align*}
    \limsup_{ T \uparrow \infty} & \frac{1}{T} \int_0^T \E \left[ \left \vert \big( \theta^{N,\boldsymbol{\lambda}^{-i,N}}_t \big)^{\beta} - \theta_\infty^\beta \right \vert^2 \Big \vert \theta_\infty \right] dt \\
    & = \int_{\R_+^{N-1}} \Big \vert \big( \frac{1}{N-1}\sum_{j \neq i}x_j \big)^\beta - \theta_\infty^\beta \Big \vert^2 \bigotimes_{j \neq i}p^r_\infty(dx_j,\theta_\infty), \quad \prob\text{-a.s.}
\end{align*}
which converges to $0$ in expectation as $N$ goes to infinity by Lemma \ref{N_player:regular:lemma:convergence_empirical_measures} and dominated convergence theorem, with $C(1+\theta_\infty^2)$ as the integrable upper bound.

\smallskip
As for the second term, we claim that $\J(\nu,\theta_\infty) - \J(\lambda^{i,r},\theta_\infty) \leq 0$ for any $\nu \in \A_{N,c}$.
Indeed, observe that, since $\nu \in \A_N$ is independent of $\theta_\infty$ by definition of admissible strategies for the $N$-player game, the proof of Proposition \ref{dev_player:prop:optimal_deviation} shows that 
\[
\sup_{\nu \in \A_N} \J(\nu,\theta_\infty) = \J(U^{i,*},\theta_\infty),
\]
where $U^{i,*}$ is the policy that reflects the process $X^{i,U^{i,*}}$ upward à la Skorohod at the level $a^*$ given by \eqref{dev_player:opt_reflection_boundary}.
In particular, $X^{i,U^{i,*}}$ as the same distribution of the process $X^{U^*}$.
Therefore, we have
\[
\sup_{\nu \in \A_{N,c}}\left( \J(\nu,\theta_\infty) - \J(\lambda^{i,r},\theta_\infty) \right) \leq \sup_{\nu \in \A_{N}}\J(\nu,\theta_\infty) - \J(\lambda,\theta_\infty) \leq \J(U^*,\theta_\infty) - \J(\lambda,\theta_\infty) \leq 0
\]
where we used the inclusion $\A_{N,c} \subseteq \A_N$, the fact that $(X^{i,r},\lambda^{i,r},\theta_\infty)$ has the same distribution as $(X,\lambda,\theta_\infty)$ and the optimality property \ref{cce:def:optimality} of CCE of the ergodic MFG.

\smallskip
As for the third term, taking advantage of the conditional independence and identical distribution of $(X^{\lambda^{i,r}})_{i \geq 1}$ and by analogous estimates as in \eqref{N_player:regular:thm:estimates1}, we have
\begin{align*}
    \vert \J & (\lambda^{i,r},\theta_\infty) - \J_N(\lambda^{i,r},\boldsymbol{\lambda}^{-i,N}) \vert \\
    & \leq C (1 + \E[\theta_\infty^2])^\frac{1}{2} \left( \E\left[ \limsup_{ T \uparrow \infty} \frac{1}{T} \int_0^T \E\left[ \left \vert \big( \theta^{N,\boldsymbol{\lambda}^{-i,N}}_t \big)^{\beta} - \theta_\infty^{\beta} \right \vert^2 \big\vert \theta_\infty \right] dt \right]\right)^\frac{1}{2}
\end{align*}
and we conclude the proof by applying Lemmata \ref{N_player:lemma:ergodic_limit_q_power} and \ref{N_player:regular:lemma:convergence_empirical_measures} as before.
\end{proof}

\begin{remark}[On the integrability condition]\label{N_player:rmk:integrability}
It is worth to notice that the integrability condition \eqref{N_player:admissible_strategies:integrability} that defines $c$-admissible strategies can be weakened  at the price of more integrability requirements on $\theta_\infty$, $\xi$ and the diffusion $X^0$.
Indeed, let $q=\sfrac{\beta}{1-\alpha}$, $k=1+\lceil q \rceil$, and suppose that $2\delta-(k-1)\sigma>0$, $\E[\xi^k]<\infty$ and $\E[\theta_\infty^k]<\infty$, which imply that the estimates in point \ref{geometric_bm:reflected:lemma:integrability} of Lemma \ref{geometric_bm:reflected:lemma} holds up to the $k$-th moment.
Then, up to little modification of the proof, one could consider strategies $\nu \in \A_N$ so that
\[
\limsup_{T \uparrow \infty} \frac{1}{T}\E\left[\int_0^T X^\nu_s ds\right] \leq c.
\] 
\end{remark}

\subsection{Singular Recommendation}
We now look for a policy $\lambda^s$ of reflection type at a random barrier $a(\theta_\infty)$.
Let $\mathcal{G}^s$ be the set of correlated stationary strategies $(Z,\lambda^s,\theta_\infty)$ so that $Z=\theta_\infty$, $\theta_\infty \in L^2(\F_{0^{-}})$ independent of $\xi$ and $W$, and $\lambda^s$ is the control that reflects the process $X^{\lambda^s}$ upwards à la Skorohod at the random level
\begin{equation}\label{cce:singular:barrier_mean}
    a(\theta_\infty) = \frac{2\delta}{2\delta + \sigma^2}\theta_\infty.
\end{equation}

\begin{prop}\label{cce:singular:prop_consistency}
Let $(Z,\lambda^s,\theta_\infty)$ in $\mathcal{G}^s$.
Let $p^s_\infty(dx,\theta)$ be the stochastic kernel from $\R_+$ to $\mathcal{B}_{\R_+}$ defined by
\begin{equation}\label{cce:singular:limit_kernel}
p^s_\infty(dx,\theta)= p_{a(\theta)}(dx),
\end{equation}
where $p_a$ is the family of measures defined by \eqref{gemetric_bm:reflected:density}.
Then, the triple $(Z,\theta_\infty,\lambda^s)$ is a correlated stationary strategy so that the consistency condition \eqref{cce:cons_eq} is satisfied.
In particular, it holds $\mu^s_\infty(dx,d\theta) = \lim_{t \to \infty}\prob\circ(X^{\lambda^s}_t,\theta_\infty)^{-1} = p^s_\infty(dx,\theta)\rho(d\theta)$.
\end{prop}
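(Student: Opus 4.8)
The plan is to follow the same scheme as in the proof of Proposition \ref{cce:regular:prop_corr_strategy}, exploiting the fact that, conditionally on the realization of the correlation device, the controlled state is exactly a diffusion reflected upward at a \emph{deterministic} barrier, whose long-time behaviour is completely described by Lemma \ref{geometric_bm:reflected:lemma}.

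First I would check that $(Z,\lambda^s,\theta_\infty)$ is a correlated stationary strategy in the sense of Definition \ref{def:correlated_strategy}. Since $Z=\theta_\infty$ and $\theta_\infty \in L^2(\F_{0^-})$ is $\sigma(Z)$-measurable and independent of $\xi$ and $W$, requirements (i) and (ii) are immediate. For (iii), note that the barrier $a(\theta_\infty)$ in \eqref{cce:singular:barrier_mean} is $\sigma(\theta_\infty)$-measurable, and the explicit representation of the reflecting control in point \ref{geometric_bm:reflected:lemma:reflection} of Lemma \ref{geometric_bm:reflected:lemma} shows that $\lambda^s$ is progressively measurable with respect to the $\prob$-augmentation of $(\sigma(Z)\vee\F^{\xi,W}_t)_{t \geq 0^-}$.

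Next I would identify the limiting joint law. Freezing $\theta_\infty=\theta$ and using that $\theta_\infty$ is independent of $(\xi,W)$, the conditional dynamics of $X^{\lambda^s}$ coincide with those of the diffusion reflected upward à la Skorohod at the deterministic level $a(\theta)$. By point \ref{geometric_bm:reflected:lemma:reflection} of Lemma \ref{geometric_bm:reflected:lemma}, this reflected process is positively recurrent with unique stationary measure $p_{a(\theta)}$; hence the regular conditional law $p^s_t(dx,\theta)$ of $X^{\lambda^s}_t$ given $\theta_\infty=\theta$ converges to $p_{a(\theta)}(dx)=p^s_\infty(dx,\theta)$ for $\rho$-a.e.\ $\theta$. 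Since $(a,B)\mapsto p_a(B)$ is a stochastic kernel by point \ref{geometric_bm:reflected:lemma:finite_moments} of Lemma \ref{geometric_bm:reflected:lemma} and $a(\cdot)$ is measurable, the map in \eqref{cce:singular:limit_kernel} is a genuine stochastic kernel; integrating the conditional convergence against $\rho$ then yields the weak convergence $\mu^s_t \to p^s_\infty(dx,\theta)\rho(d\theta)=\mu^s_\infty$, identifying $p_\infty=p^s_\infty$ in Definition \ref{def:CCE}.

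Finally I would verify the consistency condition \eqref{cce:cons_eq}. Using the first-moment formula \eqref{geometric_bm:reflected:barrier_mean} together with the choice of barrier \eqref{cce:singular:barrier_mean}, one computes directly
\[
\int_{\R_+} x\, p^s_\infty(dx,\theta_\infty) = \int_{\R_+} x\, p_{a(\theta_\infty)}(dx) = \frac{2\delta+\sigma^2}{2\delta}\, a(\theta_\infty) = \theta_\infty,
\]
which is exactly \eqref{cce:cons_eq}; note that the barrier $a(\theta)$ was calibrated precisely so that this identity holds. The only point requiring some care is the passage from the $\rho$-almost-everywhere conditional convergence to the joint weak convergence of $(X^{\lambda^s}_t,\theta_\infty)$; this is handled, as in Proposition \ref{cce:regular:prop_corr_strategy}, by testing against bounded continuous functions on $\R_+^2$ and invoking dominated convergence in the $\theta$-variable against $\rho$.
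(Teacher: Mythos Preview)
Your proposal is correct and follows essentially the same approach as the paper's proof: condition on $\theta_\infty=\theta$, invoke Lemma \ref{geometric_bm:reflected:lemma} to get convergence of the conditional law to $p_{a(\theta)}$, and then use \eqref{geometric_bm:reflected:barrier_mean} together with the choice of barrier \eqref{cce:singular:barrier_mean} to verify consistency. You are slightly more explicit than the paper in checking the measurability requirement (iii) of Definition \ref{def:correlated_strategy} and in spelling out the dominated-convergence step for passing from conditional to joint weak convergence, but these are natural elaborations of the same argument.
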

\begin{proof}
Since the map $\R_+ \times \mathcal{B}_{\R_+} \ni (a,B) \mapsto p_a(B)$ defines a stochastic kernel from $\R_+$ to $\mathcal{B}_{\R_+}$, the kernel \eqref{cce:singular:limit_kernel} is well defined.
As in the proof of Proposition \ref{cce:regular:prop_corr_strategy}, we show that the joint law of $X^{\lambda^s}_t$ and $\theta_\infty$ converges weakly to $\mu^s_\infty$ as $t \to \infty$.
Indeed, conditionally to $\theta_\infty = \theta$, $X^{\lambda^s}_t$ satisfies the equation \eqref{mf:dynamics} with $\nu$ replaced by $\lambda^{s,\theta}$, where $\lambda^{s,\theta}$ reflects the process $X^{\lambda^s,\theta}$ upwards at the level $a(\theta)$, for $\rho$-a.e. $\theta \in \R_+$.
By Lemma \ref{geometric_bm:reflected:lemma}, the reflected process $X^{\lambda^s,\theta}$ admits $p_{a(\theta)}$ given by \eqref{gemetric_bm:reflected:density} as the unique invariant distribution.
This implies that the regular conditional probability of $X^{\lambda^s}_t$ with respect to $\theta_\infty$, that we denote by $p^s_t(dx,\theta)$, converges weakly to $p^s_\infty(dx,\theta)$ as $t \to \infty$ for $\rho$-a.e. $\theta > 0$.
Consistency condition \eqref{cce:cons_eq} follows from the definition of $a(\theta_\infty)$ and \eqref{geometric_bm:reflected:barrier_mean}.
\end{proof}

\begin{prop}\label{cce:singular:prop_optimality}
A correlated stationary strategy $(Z,\theta_\infty,\lambda^s)$ in $\mathcal{G}^s$ is a mean-field CCE for the ergodic MFG if and only if the following inequality is satisfied:
\begin{equation}\label{cce:singular:optimality_moments}
    c_{\beta} (\E[\theta_\infty^{\beta}])^{\frac{1}{1-\alpha}} + c_1\E[\theta_\infty] \leq \Tilde{c}_{\alpha+\beta}\E[\theta_\infty^{\alpha+\beta}],
\end{equation}
where $c_1$ and $c_{\beta}$ are given by \eqref{cce:reg_ctrl:constants} and $\Tilde{c}_{\alpha+\beta}$ is given by 
\begin{equation}\label{cce:singular:constants}
    \Tilde{c}_{\alpha+\beta} := \frac{2\delta + \sigma^2}{2\delta + \sigma^2(1-\alpha)}\left( \frac{2\delta}{2\delta + \sigma^2} \right)^{\alpha}.
\end{equation}
\end{prop}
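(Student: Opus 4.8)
The plan is to mirror the proof of Proposition \ref{cce:regular:prop_optimality}, taking advantage of the fact that consistency is already guaranteed by Proposition \ref{cce:singular:prop_consistency}, so that only the optimality condition \ref{cce:def:optimality} must be translated into \eqref{cce:singular:optimality_moments}. By Proposition \ref{dev_player:prop:optimal_deviation} the best response of the deviating player is reflection at $a^* = K(\E[\theta_\infty^{\beta}])^{\sfrac{1}{1-\alpha}}$, and since the deviating player's payoff \eqref{dev_player:payoff} depends on $\theta_\infty$ only through $\E[\theta_\infty^{\beta}]$, the evaluation of $\J(U^*,\theta_\infty)$ is verbatim the one of the regular case and gives $\J(U^*,\theta_\infty) = c_{\beta}(\E[\theta_\infty^{\beta}])^{\sfrac{1}{1-\alpha}}$. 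Because $\J(U^*,\theta_\infty) = \max_{\nu \in \A_{mf}}\J(\nu,\theta_\infty)$, the optimality condition \ref{cce:def:optimality} is equivalent to the single inequality $\J(\lambda^s,\theta_\infty) \geq \J(U^*,\theta_\infty)$, and establishing the ``if and only if'' reduces to computing $\J(\lambda^s,\theta_\infty)$ explicitly and rearranging.

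For the recommended payoff I would condition on $\theta_\infty = \theta$. Conditionally, $\lambda^s$ reflects the state upwards at the deterministic barrier $a(\theta) = \frac{2\delta}{2\delta+\sigma^2}\theta$, so by formula \eqref{alvarez:ergodic_functional} (that is, \cite[Lemma 2.1]{alvarez2018stationary}) the conditional long-run average reward equals $C(a(\theta),\theta^{\beta})$, and here the $\liminf$ is in fact a genuine limit. Substituting $a(\theta)$ and $p=\theta^{\beta}$ into \eqref{alvarez:ergodic_functional} and simplifying, the first term collapses to $\Tilde{c}_{\alpha+\beta}\theta^{\alpha+\beta}$ and the cost term to $-c_1\theta$, so that $C(a(\theta),\theta^{\beta}) = \Tilde{c}_{\alpha+\beta}\theta^{\alpha+\beta} - c_1\theta$, which is exactly the mean-dependence function $f(\theta)$ of \eqref{mfc:mean_dependence_function}. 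This identity is the natural counterpart of the Gamma-integral computation in the regular case, and is the only place where the singular structure of the recommendation enters the argument.

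The main technical obstacle, just as in the regular case, is the interchange of the time-limit and the expectation over $\theta_\infty$. I would produce an integrable dominating bound of the form $C(1+\theta_\infty^2)$ for the conditional time-averaged reward $\frac{1}{T}\E[\int_0^T (X^{\lambda^s}_t)^{\alpha}\theta^{\beta}\,dt - q\lambda^s_T \mid \theta_\infty]$: the running-profit term is controlled using $x^{\alpha}\leq 1+x$ together with the uniform-in-$T$ moment bound $\sup_{t}\E[X^{\lambda^s}_t \mid \theta_\infty] \leq C(1+a(\theta_\infty))$ from part \ref{geometric_bm:reflected:lemma:integrability} of Lemma \ref{geometric_bm:reflected:lemma}, while the cost term is bounded via the estimate on $\frac{1}{T}\lambda^s_T$ in the same part of that Lemma. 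Since $a(\theta_\infty)$ is linear in $\theta_\infty$ and $\beta<1$, both contributions are dominated by $C(1+\theta_\infty^2)$, which is integrable because $\theta_\infty \in L^2(\F_{0^{-}})$. Dominated convergence then yields $\J(\lambda^s,\theta_\infty) = \Tilde{c}_{\alpha+\beta}\E[\theta_\infty^{\alpha+\beta}] - c_1\E[\theta_\infty]$, and rearranging $\J(\lambda^s,\theta_\infty) \geq \J(U^*,\theta_\infty)$ produces precisely \eqref{cce:singular:optimality_moments}. The only genuine subtlety beyond routine estimation is that the dominating bound must hold uniformly in $T$, which is exactly what the uniform moment estimates of Lemma \ref{geometric_bm:reflected:lemma} guarantee.
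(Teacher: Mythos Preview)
Your proposal is correct and follows essentially the same approach as the paper: reduce to the single inequality $\J(\lambda^s,\theta_\infty)\geq \J(U^*,\theta_\infty)$ via Proposition \ref{dev_player:prop:optimal_deviation}, compute the conditional long-run reward as $C(a(\theta),\theta^\beta)$ using \eqref{alvarez:ergodic_functional}, apply dominated convergence with a $C(1+\theta_\infty^2)$ bound coming from Lemma \ref{geometric_bm:reflected:lemma}, and rearrange. Your explicit identification of $C(a(\theta),\theta^\beta)$ with the function $f(\theta)$ of \eqref{mfc:mean_dependence_function} is a nice additional observation not spelled out in the paper.
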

\begin{proof}
As in the proof of Proposition \ref{cce:regular:prop_optimality}, it is enough to verify that the inequality $\J(\lambda^s,\theta_\infty) \geq \J(U^*,\theta_\infty)$ is equivalent to \eqref{cce:singular:optimality_moments}.
By \eqref{dev_player:payoff:moment_m_infty}, the payoff of the deviating player is equal to
\[
\J(U^*,\theta_\infty) = c_{\beta}( \E[\theta_\infty^{\beta}] )^{\frac{1}{1-\alpha}}.
\]
We turn our attention to $\J(\lambda^s,\theta_\infty)$.
We note that, conditionally to $\theta_\infty = \theta$, it holds
\begin{equation*}
\begin{aligned}
    & \lim_{T \uparrow \infty} \frac{1}{T} \E \left[ \int_0^T (X^{\lambda^s}_t)^\alpha \theta^\beta dt - q \lambda^s_T \Big \vert \theta_\infty = \theta \right]  = C(a(\theta),\theta^{\beta}), \quad \rho\text{-a.e. $\theta \in \R_+$}, 
\end{aligned}
\end{equation*}
where $C(a,p)$ is given by \eqref{alvarez:ergodic_functional}.
To see this, it is enough to recall that, by Proposition \ref{cce:singular:prop_consistency}, for $\rho$-a.e. $\theta$ in $\R_+$, we have $p^s_t(dx,\theta) \to p^s_\infty(dx,\theta)$ weakly as $t \to \infty$, with $p^s_\infty(dx,\theta)$ given by \eqref{cce:singular:limit_kernel}.
Since, conditionally to $\theta_\infty = \theta$, the control $\lambda^s$ is a reflection at the barrier $a(\theta)$, we apply \cite[Lemma 2.1]{alvarez2018stationary} to get the equality above.
By applying Lemma \ref{geometric_bm:reflected:lemma} with $a = a(\theta_\infty)$, and exploiting square-integrability of $\theta_\infty$, at any time $T > 0$ we can bound the left hand-side with $C(1 + \theta_\infty^2)$, for some positive constant $C$ independent of $\theta_\infty$.
Therefore, by dominated convergence theorem, we can exchange limit and expectation, to get
\begin{equation*}
\begin{aligned}
    \J & (\lambda^s,\theta_\infty) = \lim_{T \uparrow \infty} \frac{1}{T}\E\left[ \E \left[ \int_0^T (X^{\lambda^s}_t)^\alpha \theta_\infty^\beta dt - q \lambda^s_T \Big \vert \theta_\infty \right]  \right]  = \E[C(a(\theta_\infty),\theta_\infty^{\beta})] \\
    & = \frac{2\delta + \sigma^2}{2\delta + \sigma^2(1-\alpha)}\left( \frac{2\delta}{2\delta + \sigma^2} \right)^{\alpha}\E[\theta_\infty^{\alpha+\beta}] - q\delta\E[\theta_\infty].
\end{aligned}
\end{equation*}
By rearranging the terms, we have that $\J(U^*,\theta_\infty) \leq \J(\lambda,\theta_\infty)$ if and only if equation \eqref{cce:singular:optimality_moments} is satisfied.
\end{proof}

Finally, consider a correlated stationary strategy $(Z,\theta_\infty,\lambda^s)$ in the family $\mathcal{G}^s$ and define a correlated strategy profile for the $N$-player game starting from it: we take $Z = \theta_\infty$ as correlation device, and, for any $i \geq 1$, we consider the policy $\lambda^{i,s} = (\lambda^{i,s}_t)_{t \geq 0^{-}}$ according to which the state $X^{i,s}$ is reflected upward at the random barrier $a(\theta_\infty)$ given by \eqref{cce:singular:barrier_mean}.
As in the case of a regular recommendation, for each $i \geq 1$, the triple $(X^{i,s},\lambda^{i,s},\theta_\infty)$ has the same law as $(X,\lambda^s,\theta_\infty)$, and the processes $(X^{i,s})_{i \geq 1}$ are conditionally independent given $\theta_\infty$.

\begin{thm}[Approximation of CCEs - singular case]\label{N_player:singular:thm_approximation}
Let $(Z,\theta_\infty,\lambda^r)$ be a CCE in the class $\mathcal{G}^s$.
Let $\boldsymbol{\lambda}^N = (\lambda^{i,s})_{i=1}^N$, with $\lambda^{i,s}$ the policy according to which the state is reflected upward at the random barrier $a(\theta_\infty)$ given by \eqref{cce:singular:barrier_mean}.
Then, for any $c > 0$, the correlated strategy profile $(\theta_\infty, \boldsymbol{\lambda}^N)$ defines an $\varepsilon_N$-CCE for the $N$-player game within the set of strategies $\A_{N,c}$, with $\varepsilon_N \to 0$ as $N\to\infty$.
\end{thm}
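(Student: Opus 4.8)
The plan is to mirror the proof of Theorem \ref{N_player:regular:thm_approximation} in structure, substituting the singular recommendation $\lambda^{i,s}$ (reflection at the random barrier $a(\theta_\infty)$) for the regular one and the limit kernel $p^s_\infty(dx,\theta)=p_{a(\theta)}(dx)$ for $p^r_\infty$. First I would set
\[
\varepsilon_N := \sup_{\nu \in \A_{N,c}} \left( \J_N(\nu,\boldsymbol{\lambda}^{-i,N}) - \J_N(\lambda^{i,s},\boldsymbol{\lambda}^{-i,N}) \right),
\]
which by symmetry of the profile is independent of $i$, so that $(\theta_\infty,\boldsymbol{\lambda}^N)$ is automatically an $\varepsilon_N$-CCE within $\A_{N,c}$; the content of the theorem is then that $\varepsilon_N \to 0$.

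Next I would decompose, for an arbitrary $\nu \in \A_{N,c}$, the quantity $\J_N(\nu,\boldsymbol{\lambda}^{-i,N}) - \J_N(\lambda^{i,s},\boldsymbol{\lambda}^{-i,N})$ into the same three differences as in \eqref{N_player:regular:difference_decomposition}, namely $(\J_N(\nu,\boldsymbol{\lambda}^{-i,N}) - \J(\nu,\theta_\infty))$, $(\J(\nu,\theta_\infty) - \J(\lambda^s,\theta_\infty))$, and $(\J(\lambda^s,\theta_\infty) - \J_N(\lambda^{i,s},\boldsymbol{\lambda}^{-i,N}))$, using that $(X^{i,s},\lambda^{i,s},\theta_\infty)$ has the same law as $(X,\lambda^s,\theta_\infty)$. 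The middle term is $\leq 0$ by exactly the argument of the regular case: since a deviation $\nu \in \A_N$ is independent of $\theta_\infty$, Proposition \ref{dev_player:prop:optimal_deviation} gives $\sup_{\nu \in \A_N}\J(\nu,\theta_\infty) = \J(U^*,\theta_\infty)$, and optimality \ref{cce:def:optimality} of the mean-field CCE yields $\J(U^*,\theta_\infty) \leq \J(\lambda^s,\theta_\infty)$; the inclusion $\A_{N,c}\subseteq\A_N$ finishes this term. The first and third terms I would bound by Cauchy--Schwarz and the liminf inequality exactly as in \eqref{N_player:regular:thm:estimates1}, reducing both to controlling $\E[\limsup_{T}\frac{1}{T}\int_0^T \E[|(\theta^{N,\boldsymbol{\lambda}^{-i,N}}_t)^\beta - \theta_\infty^\beta|^2\mid\theta_\infty]\,dt]$.

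The one genuinely different ingredient is where the conditional moment bounds come from: here I would invoke point \ref{geometric_bm:reflected:lemma:integrability} of Lemma \ref{geometric_bm:reflected:lemma} in place of the regular-case integrability lemma. Conditionally on $\theta_\infty=\theta$, each $X^{j,s}$ is the diffusion reflected at $a(\theta)=\frac{2\delta}{2\delta+\sigma^2}\theta$, so $\sup_t \E[(X^{j,s}_t)^2\mid\theta_\infty] \leq c(1+a(\theta_\infty)^2) \leq C(1+\theta_\infty^2)$, which is integrable because $\theta_\infty\in L^2$. This envelope $C(1+\theta_\infty^2)$ legitimizes the reverse-Fatou exchange of $\limsup$ and $\E$, after which the conditional ergodic ratio theorem identifies the inner $\limsup$, $\prob$-a.s., with $\int_{\R_+^{N-1}}|(\frac{1}{N-1}\sum_{j\neq i}x_j)^\beta - \theta_\infty^\beta|^2 \bigotimes_{j\neq i}p^s_\infty(dx_j,\theta_\infty)$ (the singular analogues of Lemmata \ref{N_player:lemma:ergodic_limit_q_power} and \ref{N_player:regular:lemma:convergence_empirical_measures} apply to the conditionally i.i.d. reflected family). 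The decisive point is that $\int_{\R_+} x\, p_{a(\theta)}(dx) = \theta$ by \eqref{geometric_bm:reflected:barrier_mean} and the choice \eqref{cce:singular:barrier_mean} of $a(\theta)$ --- precisely the consistency of Proposition \ref{cce:singular:prop_consistency} --- so the conditional law of large numbers drives this to $0$ as $N\to\infty$, and a final dominated convergence with the same envelope concludes.

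I expect the main obstacle to be purely the integrability bookkeeping: justifying each interchange of $\limsup$/limit with the conditional and unconditional expectations. In contrast to the regular case, no new analytic difficulty arises, since the reflected state is bounded below by $a(\theta_\infty)>0$ and has explicit stationary moments from Lemma \ref{geometric_bm:reflected:lemma}; if anything the singular case is cleaner. The subtle step worth double-checking is that the conditional LLN is applied to a genuinely conditionally-i.i.d. family $(X^{j,s})_{j\neq i}$ given $\theta_\infty$, which holds because the common barrier $a(\theta_\infty)$ is $\sigma(\theta_\infty)$-measurable while the driving Brownian motions $(W^j)$ are independent.
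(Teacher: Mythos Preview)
Your proposal is correct and matches the paper's approach exactly: the paper simply states that the proof is omitted because it is completely analogous to Theorem \ref{N_player:regular:thm_approximation}, with the only change being to invoke Lemma \ref{geometric_bm:reflected:lemma} in place of Lemma \ref{N_player:regular:lemma_integrability} for the conditional second-moment bounds. Your detailed write-up carries this out precisely, including the correct use of Lemmata \ref{N_player:lemma:ergodic_limit_q_power} and \ref{N_player:regular:lemma:convergence_empirical_measures}, which are already formulated to cover both the regular and singular kernels.
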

We omit the proof since it is completely analogous to the proof of Theorem \ref{N_player:regular:thm_approximation}: it is enough to repeat the proof of Theorem \ref{N_player:regular:thm_approximation}, invoking Lemma \ref{geometric_bm:reflected:lemma} instead of Lemma \ref{N_player:regular:lemma_integrability}.

\subsection*{Nash Equilibrium for the Ergodic Mean-field Game}

Since the MFG considered here does not satisfy the assumptions of \cite[Theorem 4.4]{cao2023stationary}, we cannot directly deduce existence and uniqueness of NE for the MFG can not be applied.
Nevertheless, we have the following result:
\begin{prop}\label{mfg:thm:nash_eq}
If $\alpha + \beta \neq 1$, there exists a unique Nash equilibrium $(\nu^{*}, \theta^{*})$ of the ergodic MFG.
Moreover, the process $\nu^{*}$ reflects the state process at a barrier $ a^*$, and the pair $(a^*, {\theta}^*)$ is given by
\begin{equation}\label{mfg:nash_eq:expression}
\begin{aligned}
    & {a}^* = \left( \frac{2\delta + \sigma^2}{2\delta} \right)^{\frac{ \beta }{1-\alpha - \beta}}\tonde{ \frac{2\alpha}{q(2\delta + \sigma^2(1 - \alpha))}}^{\frac{1}{1-\alpha-\beta}}, \\
    & \theta^* = \left( \frac{2\delta + \sigma^2}{2\delta} \right)^{\frac{1-\alpha}{1-\alpha - \beta}} \tonde{ \frac{2\alpha}{q(2\delta + \sigma^2(1 - \alpha))}}^{\frac{1}{1-\alpha-\beta}}.
\end{aligned}
\end{equation}
If $\alpha + \beta = 1$ and the relation
\begin{equation}\label{mfg:condition:infinitely_many}
     1 + \frac{\sigma^2}{2\delta} = \left( \frac{q\delta}{\alpha} \right)^{\frac{1}{1-\alpha}} \left( 1 +(1-\alpha) \frac{\sigma^2}{2\delta} \right)^{\frac{1}{1-\alpha}}
\end{equation}
holds, then there exist infinitely many mean-field Nash equilibria given by the pair $(\nu^{a(\theta)},\theta)$, with $\nu$ a reflection at the level $a(\theta) = K\theta^{\beta}$ and $\theta>0$; otherwise, it does not exist any Nash equilibrium for the MFG.
\end{prop}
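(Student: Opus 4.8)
The plan is to decouple the two requirements in Definition \ref{mf:def:NE} by freezing $\theta^*$ as a scalar in the optimality condition and only afterwards imposing consistency, thereby reducing the whole problem to a single scalar fixed-point equation. For a fixed scalar $\theta>0$ the payoff $\J(\nu,\theta)$ coincides with the functional $\Tilde{\J}(\nu,\theta^\beta,0)$ of \eqref{lemma:ergodic_control:payoff} with profit parameter $p=\theta^\beta>0$ and zero linear term $\lambda=0$. Since $q\delta-0>0$, Lemma \ref{lemma:ergodic_control} applies and the optimal policy maximizing $\J(\cdot,\theta)$ over $\A$ is the Skorohod reflection of $X^\nu$ at the barrier obtained from \eqref{lemma:ergodic_control:barrier},
\[
a^*(\theta)=\left(\frac{2\alpha}{q(2\delta+\sigma^2(1-\alpha))}\right)^{\frac{1}{1-\alpha}}\theta^{\frac{\beta}{1-\alpha}}=K\,\theta^{\frac{\beta}{1-\alpha}},
\]
with $K$ given by \eqref{dev_player:constant_K}. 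By Lemma \ref{geometric_bm:reflected:lemma} this reflecting policy is a functional of the uncontrolled process $X^0$, hence $\mathbb{F}^{\xi,W}$-progressively measurable and so in $\A_{mf}$; being optimal over the larger class $\A\supseteq\A_{mf}$, it is optimal within $\A_{mf}$. Thus the optimality condition \ref{mfg:def:optimality} forces $\nu^*$ to reflect at $a^*(\theta^*)$, whose stationary law is $p_{a^*(\theta^*)}$.

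Next I would impose the consistency condition \ref{mfg:def:consistency}. By the mean formula \eqref{geometric_bm:reflected:barrier_mean}, the stationary mean of $X^{\nu^*}$ equals $\frac{2\delta+\sigma^2}{2\delta}a^*(\theta^*)$, so $(\nu^*,\theta^*)$ is a Nash equilibrium of the ergodic MFG if and only if $\theta^*$ solves
\[
\theta^*=\frac{2\delta+\sigma^2}{2\delta}\,a^*(\theta^*)=M\,(\theta^*)^{\frac{\beta}{1-\alpha}},\qquad M:=\frac{2\delta+\sigma^2}{2\delta}K>0 .
\]
This gives a bijection between mean-field Nash equilibria and positive solutions of $\theta=M\theta^{\beta/(1-\alpha)}$; dividing by $\theta^{\beta/(1-\alpha)}$ recasts it as $\theta^{(1-\alpha-\beta)/(1-\alpha)}=M$, and the whole analysis reduces to counting these roots according to the sign of the exponent.

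Finally I would split into regimes via the exponent $\gamma:=\beta/(1-\alpha)$, noting that $\gamma=1$ precisely when $\alpha+\beta=1$. If $\alpha+\beta\neq1$ then $1-\gamma=(1-\alpha-\beta)/(1-\alpha)\neq0$, so $\theta\mapsto M\theta^{\gamma}$ has the unique positive fixed point $\theta^*=M^{(1-\alpha)/(1-\alpha-\beta)}$; inserting $M=\frac{2\delta+\sigma^2}{2\delta}K$ together with the value of $K$ and then $a^*=K(\theta^*)^{\beta/(1-\alpha)}$ recovers the closed forms displayed in \eqref{mfg:nash_eq:expression}, yielding existence and uniqueness. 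If instead $\alpha+\beta=1$ the equation collapses to $(1-M)\theta=0$: when $M=1$ every $\theta>0$ is a solution, giving a one-parameter family of reflecting equilibria, whereas when $M\neq1$ no positive solution exists. It then remains to check that $M=1$, i.e. $K^{-1}=1+\frac{\sigma^2}{2\delta}$, is exactly relation \eqref{mfg:condition:infinitely_many}, which follows by factoring $2\delta+\sigma^2(1-\alpha)=2\delta\left(1+(1-\alpha)\frac{\sigma^2}{2\delta}\right)$ inside $K^{-1}=\left(\frac{q(2\delta+\sigma^2(1-\alpha))}{2\alpha}\right)^{1/(1-\alpha)}$ and separating the two factors.

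The routine parts are the algebraic simplifications recovering \eqref{mfg:nash_eq:expression} and \eqref{mfg:condition:infinitely_many}. I expect the main obstacle to be the critical case $\alpha+\beta=1$: there the fixed-point map $\theta\mapsto M\theta^{\gamma}$ is positively homogeneous of degree one, so the equilibrium set is dictated entirely by the scalar $M$, and one must argue the clean dichotomy (infinitely many reflecting equilibria versus none) and pin the borderline $M=1$ to the transcendental relation \eqref{mfg:condition:infinitely_many}. A secondary subtlety is ensuring that the optimality condition genuinely determines the equilibrium control as the reflecting policy, so that consistency can be evaluated against the explicit stationary mean \eqref{geometric_bm:reflected:barrier_mean} rather than against an unknown limiting distribution.
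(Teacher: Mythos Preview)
Your proposal is correct and follows essentially the same approach as the paper: fix $\theta$, apply Lemma \ref{lemma:ergodic_control} with $p=\theta^\beta$ and $\lambda=0$ to obtain the reflecting best response at $K\theta^{\beta/(1-\alpha)}$, then impose the consistency condition via \eqref{geometric_bm:reflected:barrier_mean} and solve the resulting scalar fixed-point equation, treating the critical case $\alpha+\beta=1$ separately. The paper's proof is virtually identical, differing only in notation (it does not introduce your auxiliary constants $M$ and $\gamma$) and in not spelling out the $\mathbb{F}^{\xi,W}$-measurability of the reflecting policy.
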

\begin{proof}
Fix $\theta > 0$.
By applying Lemma \ref{lemma:ergodic_control} with $p=\theta^\beta$ and $\lambda = 0$, the payoff functional $\J(\nu,\theta)$ is maximized by the strategy $\nu^{\theta}$ which reflect the process $X^{\nu^{\theta}}$ upwards à la Skorohod at the point $a(\theta)$ given by 
\[
\hat{a}(\theta) = \tonde{ \frac{2\alpha}{q(2\delta + \sigma^2(1 - \alpha))}}^{\frac{1}{1-\alpha}} \theta^{\frac{\beta}{1-\alpha}}.
\]
In view of \eqref{geometric_bm:reflected:barrier_mean}, in order to get the consistency condition \eqref{mfg:def:consistency} satisfied, we impose
\begin{equation}\label{mfg:solution:consistency}
\theta^* =   \frac{2\delta + \sigma^2}{2\delta} K (\theta^*)^{\frac{\beta}{1-\alpha}}.
\end{equation}
If $\alpha + \beta \neq 1$, this is equivalent to
\[
(\theta^*)^{\frac{1 -\alpha -\beta}{1-\alpha}} = \frac{2\delta + \sigma^2}{2\delta}K.
\]
If $\alpha + \beta \neq 1$, the function $g(\theta):=\theta^{\frac{1-\alpha -\beta}{1-\alpha}}$ is always non negative, strictly monotone and with image equal to $\R_+$, which implies that there exists a unique $\theta^*$ so that the above equality is verified; by direct computation, it can be verified that $\theta^*$ is given by \eqref{mfg:nash_eq:expression}.
Finally, if $\alpha + \beta = 1$, condition \eqref{mfg:solution:consistency} becomes
\[
\theta^* = \frac{2\delta + \sigma^2}{2\delta} K \theta^*.
\]
Thus, the MFG admits infinitely many Nash equilibria if $\frac{2\delta + \sigma^2}{2\delta} K = 1$, and none otherwise.
By explicit calculations, this is equivalent to 
\begin{align*}
    \frac{2\delta + \sigma^2}{2\delta} = \left( \frac{q\delta}{\alpha} \right)^{\frac{1}{1-\alpha}} \left( \frac{2\delta+\sigma^2}{2\delta} - \alpha\frac{\sigma^2}{2\delta} \right)^{\frac{1}{1-\alpha}}.
\end{align*}
By rearranging the terms, we get to \eqref{mfg:condition:infinitely_many}.
\end{proof}

\begin{remark}\label{mfg:rmk_assumptions}
Analogous considerations as in Remark \ref{mfc:rmk_assumptions} hold for the MFC problem as well.
On top of those considerations, we note that, when $\alpha + \beta = 1$ and condition \eqref{mfg:condition:infinitely_many} is not satisfied, we do not have existence of a Nash equilibrium for the ergodic MFG, while the optimality conditions for both classes $\mathcal{G}^r$ and $\mathcal{G}^s$ are still valid.
The ultimate reason is that the procedure outlined in Section \ref{sec:competitive} does not involve the usual two steps scheme used to compute mean-field NEs: first, optimize with a fixed flow of moments and, second, perform a fixed point argument to determine the flow. Actually, we first impose the consistency condition and then we restate the optimality condition.
\end{remark}

We notice that the pair $(\nu^*,\theta^*)$ is a correlated stationary strategy in $\mathcal{G}^s$ with deterministic correlation device.
In particular, it satisfies the optimality condition \eqref{cce:singular:optimality_moments}.
As a consequence of Theorem \ref{N_player:singular:thm_approximation}, we also deduce that every NE for the ergodic MFG induces a sequence of approximate Nash equilibria with vanishing error in the $N$-player game:
\begin{corollary}
Let $(\nu^*,\theta^*)$ be a Nash equilibrium for the MFG, as given by Proposition \ref{mfg:thm:nash_eq}.
For any $i \geq 1$, let $\nu^{i,*}$ be the policy according to which the state is reflected upward at the random barrier $a^*$ given by \eqref{mfg:nash_eq:expression}.
Then, for any $c > 0$, the open-loop strategy profile $\boldsymbol{\nu}^{*,N} = (\nu^{i,*})_{i = 1}^N$ defines an $\varepsilon_N$-NE for the $N$-player game within the set of strategies $\A_{N,c}$, with $\varepsilon_N \to 0$ as $N\to\infty$.
\end{corollary}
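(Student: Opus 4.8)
The plan is to realize the mean-field Nash equilibrium $(\nu^*,\theta^*)$ as a mean-field CCE lying in the class $\mathcal{G}^s$ whose correlation device is degenerate, and then to invoke Theorem \ref{N_player:singular:thm_approximation} together with the equivalence between $\varepsilon$-CCE with deterministic correlation device and $\varepsilon$-NE recorded after Definition \ref{def:epsilon_Nash}. In this way the approximation statement for NEs reduces to the one already proved for CCEs, and no new estimate is required.

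First, I would identify $(\nu^*,\theta^*)$ with the triple $(Z,\theta_\infty,\lambda^s)$ in which $\theta_\infty\equiv\theta^*$ is a deterministic constant, $Z=\theta_\infty$, and $\lambda^s=\nu^*$. Since $\theta^*$ is deterministic it lies trivially in $L^2(\F_{0^-})$ and is independent of $\xi$ and $W$, so the triple belongs to $\mathcal{G}^s$ as soon as the reflection barrier $a^*$ of \eqref{mfg:nash_eq:expression} agrees with $a(\theta_\infty)=\frac{2\delta}{2\delta+\sigma^2}\theta^*$ of \eqref{cce:singular:barrier_mean}. This identity is precisely the consistency relation of the Nash equilibrium: by \eqref{geometric_bm:reflected:barrier_mean} the stationary mean of the process reflected at $a^*$ equals $\frac{2\delta+\sigma^2}{2\delta}a^*$, and condition \ref{mfg:def:consistency} forces this quantity to equal $\theta^*$, whence $a^*=a(\theta^*)$.

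Next, I would check that this triple is a mean-field CCE in the sense of Definition \ref{def:CCE}. The consistency requirement \ref{cce:def:consistency} coincides with the Nash consistency \ref{mfg:def:consistency} once $\rho$ is taken to be the Dirac mass at $\theta^*$, while the optimality requirement \ref{cce:def:optimality}, namely $\J(\lambda^s,\theta_\infty)\geq\J(\nu,\theta_\infty)$ for all $\nu\in\A_{mf}$, is literally the Nash optimality \ref{mfg:def:optimality} because $\lambda^s=\nu^*$ and $\theta_\infty=\theta^*$. Equivalently, with $\theta_\infty\equiv\theta^*$ deterministic the inequality \eqref{cce:singular:optimality_moments} collapses to the scalar fixed-point equation \eqref{mfg:solution:consistency} and therefore holds, in fact with equality, since the recommended reflection at $a^*=a(\theta^*)$ coincides with the deviating player's optimal reflection.

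Having placed $(\nu^*,\theta^*)$ in $\mathcal{G}^s$ as a genuine mean-field CCE, Theorem \ref{N_player:singular:thm_approximation} applies verbatim and yields, for every $c>0$, a sequence of $\varepsilon_N$-CCE $(\theta_\infty,\boldsymbol{\lambda}^N)$ for the $N$-player game within $\A_{N,c}$, where $\boldsymbol{\lambda}^N=(\lambda^{i,s})_{i=1}^N$ reflects each state at the barrier $a(\theta_\infty)=a^*$ and $\varepsilon_N\to0$. Since here $\theta_\infty\equiv\theta^*$ is deterministic, the correlation device $Z$ is deterministic, so by the equivalence recalled after Definition \ref{def:epsilon_Nash} the recommended profile reduces to the open-loop profile $\boldsymbol{\nu}^{*,N}=(\nu^{i,*})_{i=1}^N$ and each defining $\varepsilon_N$-CCE inequality becomes the corresponding $\varepsilon_N$-NE inequality on the same deviation set $\A_{N,c}$, giving the claim with $\varepsilon_N\to0$. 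The only points requiring care, and the closest thing to an obstacle, are the two identifications above: verifying $a^*=a(\theta^*)$ so that the Nash recommendation is exactly the one used in class $\mathcal{G}^s$, and confirming that the CCE-to-NE reduction remains valid on the restricted deviation set $\A_{N,c}$ rather than on all of $\A_N$; both are immediate because the deviating player ranges over the same set of strategies in the two equilibrium notions.
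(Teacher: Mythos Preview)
Your proposal is correct and follows essentially the same route as the paper: the Nash equilibrium $(\nu^*,\theta^*)$ is recognized as a mean-field CCE in the class $\mathcal{G}^s$ with degenerate correlation device (this is stated in the paper immediately before the corollary), Theorem~\ref{N_player:singular:thm_approximation} is applied, and the resulting $\varepsilon_N$-CCE reduces to an $\varepsilon_N$-NE because the recommendation $\boldsymbol{\lambda}^N$ is an open-loop strategy profile when $\theta_\infty$ is deterministic. Your write-up is simply more explicit about verifying $a^*=a(\theta^*)$ and the optimality condition, both of which the paper handles in the sentence preceding the corollary.
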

\begin{proof}
It is enough to notice that, when starting from the NE $(\nu^*,\theta^*)$ the recommendation $\boldsymbol{\lambda}^N$ defined in Theorem \ref{N_player:singular:thm_approximation} is actually an open-loop strategy profile for the $N$-player game.
\end{proof}

\section{Numerical Illustrations}\label{sec:numerics}
In this Section, we numerically illustrate our previous findings.
In particular, we exhibit possible choices of distribution of $\theta_\infty$ so that the correlated stationary strategies $(Z,\theta_\infty,\lambda^r)$ in $\mathcal{G}^r$ and $(Z,\theta_\infty,\lambda^s)$ in $\mathcal{G}^s$ are mean-field CCEs, i.e., according to Propositions \ref{cce:regular:prop_optimality} and \ref{cce:singular:prop_optimality}, the inequalities \eqref{cce:regular:optimality:condition_moments} and \eqref{cce:singular:optimality_moments} respectively are verified.

\smallskip
For the sake of illustrations, throughout the whole section, we assume $\theta_\infty$ is distributed as a Gamma with $u>0$ and scale parameter $v > 0$.
Then, for any $k \geq 0$ the $k$-th moment of $\theta_\infty \sim \Gamma(u,v)$ is given by
\[
\E[\theta_\infty^k] = \frac{1}{\Gamma(u) v^u}\int_0^\infty x^{k}x^{u-1}e^{-\frac{x}{v}}dx = \frac{\Gamma(u + k)}{\Gamma(u)} v^{k}.
\]
By assuming Gamma distribution on $\theta_\infty$, the optimality conditions \eqref{cce:regular:optimality:condition_moments} and \eqref{cce:singular:optimality_moments} for regular and singular recommendation become, respectively,
\begin{align}
        & c_{\beta} \left( \frac{\Gamma(\beta+u)}{\Gamma(u)} \right)^{\frac{1}{1-\alpha}} v^{\frac{\beta}{1-\alpha}} \leq c_{\alpha+\beta}  \frac{\Gamma(\alpha + \beta+u)}{\Gamma(u)} v^{\alpha + \beta} - c_1 u v,  \label{cce:regular:optimality_gamma} \\
        & c_{\beta} \left( \frac{\Gamma(\beta+u)}{\Gamma(u)} \right)^{\frac{1}{1-\alpha}} v^{\frac{\beta}{1-\alpha}} \leq \Tilde{c}_{\alpha+\beta}  \frac{\Gamma(\alpha + \beta+u)}{\Gamma(u)} v^{\alpha + \beta} - c_1 u v,  \label{cce:singular:optimality_gamma} 
\end{align}

In the case where a unique mean‐field Nash equilibrium exists (i.e., when $\alpha + \beta \neq 1$, see Theorem \ref{mfg:thm:nash_eq}), we also aim at selecting mean‐field CCEs that yield a higher payoff than the Nash equilibrium.
To implement this refinement, we then pair the existence conditions for the regular recommendation \eqref{cce:regular:optimality_gamma} and for the singular recommendation \eqref{cce:singular:optimality_gamma} with, respectively,
\begin{align}
    &  c_{\alpha+\beta} \frac{\Gamma(\alpha + \beta+u)}{\Gamma(u)} v^{\alpha + \beta} - c_1 u v \geq \Tilde{c}_{\alpha+\beta}(\theta^*)^{\alpha + \beta} - c_1 \theta^*, \label{cce:regular:out_performance_gamma} \\
    &  \Tilde{c}_{\alpha+\beta} \frac{\Gamma(\alpha + \beta+u)}{\Gamma(u)} v^{\alpha + \beta} - c_1 u v \geq \Tilde{c}_{\alpha+\beta}(\theta^*)^{\alpha + \beta} - c_1 \theta^*. \label{cce:singular:out_performance_gamma}
\end{align}
Figure \ref{fig:cce_region} shows that there exist infinitely many mean-field CCEs both in $\mathcal{G}^r$ and $\mathcal{G}^s$ which yield an higher reward than the Nash equilibrium $(\nu^*,\theta^*)$.

\begin{figure}[!ht]
    \centering
    \includegraphics[trim= 3cm 0.5cm 3cm 0.5cm, clip, width = .55\textwidth,angle=-90]{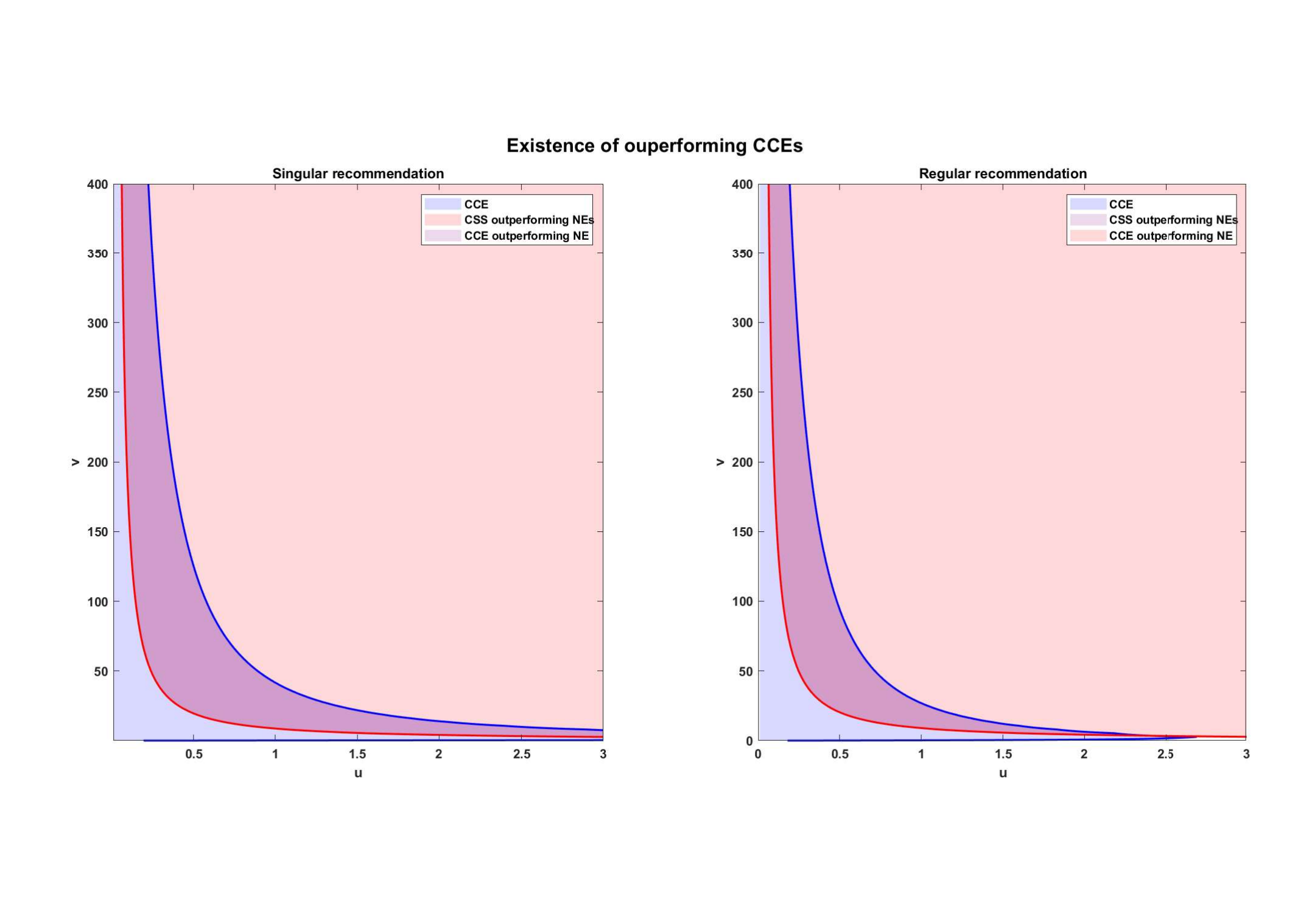}
    \caption{Values of the parameters $(u,v) \in \R_+^2$ so that the $(Z,\lambda^s,\theta_\infty) \in \mathcal{G}^s$ (on the left) and $(Z,\lambda^r,\theta_\infty) \in \mathcal{G}^r$ (on the right), $\theta_\infty \sim \Gamma(u,v)$ is a mean-field CCE outperforming the NE. Here, $\delta = 0.1$, $\sigma = 0.2$, $q = 2$, $\alpha = 0.3$ and $\beta = 0.5$.
    Notice that Assumption \ref{assumption:dissipativity} is satisfied.}
    \label{fig:cce_region}
\end{figure}

\smallskip
For the same choice of parameters as in Figure \ref{fig:cce_region}, Figure \ref{fig:cce_reward} shows the reward associated to those mean-field CCEs in $\mathcal{G}^r$ and $\mathcal{G}^s$ that yield an higher reward than the Nash equilibrium $(\nu^*,\theta^*)$.
The improvement on the Nash equilibrium is $\approx 17\%$ of the reward yielded by the mean-field control solution $\hat{\nu}$ in the singular case, and $\approx 12\%$ in the regular case.

\begin{figure}[!ht]
    \centering
    \includegraphics[trim= 4.5cm 1.5cm 4.5cm 1.5cm, clip, width =0.5\textwidth,angle=-90]{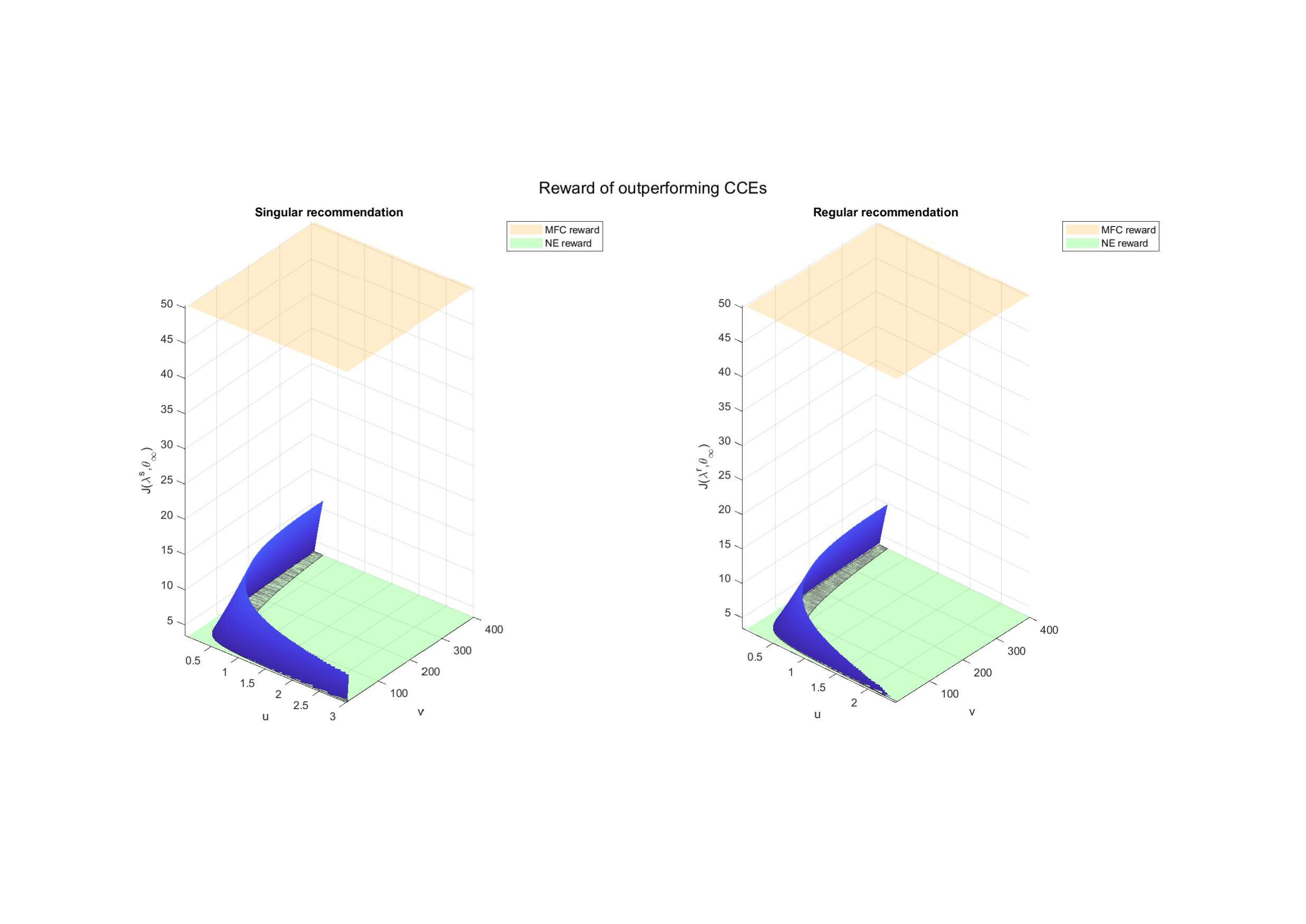}
    \caption{Reward associated to mean-field CCEs $(Z,\lambda^s,\theta_\infty) \in \mathcal{G}^s$ (on the left) and $(Z,\lambda^r,\theta_\infty) \in \mathcal{G}^r$ (on the right) which outperform the reward of the mean-field NE, when $\theta_\infty \sim \Gamma(u,v)$, $(u,v) \in \R_+^2$. Here, $\delta = 0.1$, $\sigma = 0.2$, $q = 2$, $\alpha = 0.3$ and $\beta = 0.5$.
    }
    \label{fig:cce_reward}
\end{figure}

\smallskip
In the case in which there exists a MFC solution (i.e. $\alpha+\beta < 1$ or $\alpha+\beta = 1$ and condition \eqref{eq:mfc:condition_zero_infinity} is verified, see Theorem \ref{mfc:thm:optimal_control}), we are also interested in finding the highest reward yielded by CCEs in the classes $\mathcal{G}^r$ and $\mathcal{G}^s$, for $\theta_\infty \sim \Gamma(u,v)$, i.e., respectively,
\begin{align*}
    & \max_{(u,v) \in \R^2_+} \Big( c_{\alpha+\beta} \frac{\Gamma(\alpha + \beta+u)}{\Gamma(u)} v^{\alpha + \beta} - c_1 u v \Big), \\
    & \max_{(u,v) \in \R^2_+} \Big( \Tilde{c}_{\alpha+\beta} \frac{\Gamma(\alpha + \beta+u)}{\Gamma(u)} v^{\alpha + \beta} - c_1 u v \Big).
\end{align*}
Figure \ref{fig:cce_sigma} shows the maximum payoff in the classes $\mathcal{G}^s$ and $\mathcal{G}^r$, for $\theta_\infty \sim \Gamma(u,v)$, as a function of the volatility parameter $0 < \sigma < \sqrt{2\delta}$, where the last restriction ensures that Assumption \ref{assumption:dissipativity} is satisfied.
We notice that, for any choice of $\sigma$, there always exists a mean-field CCE in $\mathcal{G}^s$ that outperforms the mean-field NE. On the contrary, we notice that, when $\sigma$ is small enough, there does not exist any mean-field CCE in the class $\mathcal{G}^r$ with $\theta_\infty \sim \Gamma(u,v)$.
Moreover, when $\sigma$ is large enough, the best performing mean-field CCE in $\mathcal{G}^r$ does not outperform the mean-field NE.

Figure \ref{fig:cce_beta} shows the maximum payoff in the classes $\mathcal{G}^s$ and $\mathcal{G}^r$, for $\theta_\infty \sim \Gamma(u,v)$, as a function of interaction strength $0 \leq \beta \leq 1-\alpha$, for fixed $\alpha$.
On the right picture, the parameters are chosen so that condition \eqref{eq:mfc:condition_zero_infinity} is satisfied, so that, in the case $\alpha + \beta = 1$, the null control is optimal for the MFC problem.
On the contrary, on the left picture the parameters do not satisfy \eqref{eq:mfc:condition_zero_infinity}, so that, in the case $\alpha + \beta = 1$, the problem is ill-posed.
In both cases, for $\beta$ large enough, the best performing CCEs in the classes $\mathcal{G}^r$ and $\mathcal{G}^s$, for $\theta_\infty \sim \Gamma(u,v)$, yield an higher payoff than the mean-field NE.

\begin{figure}[!ht]
    \centering
    \includegraphics[trim= 1.5cm 8.35cm 1.5cm 9cm, clip, width =0.63\textwidth]{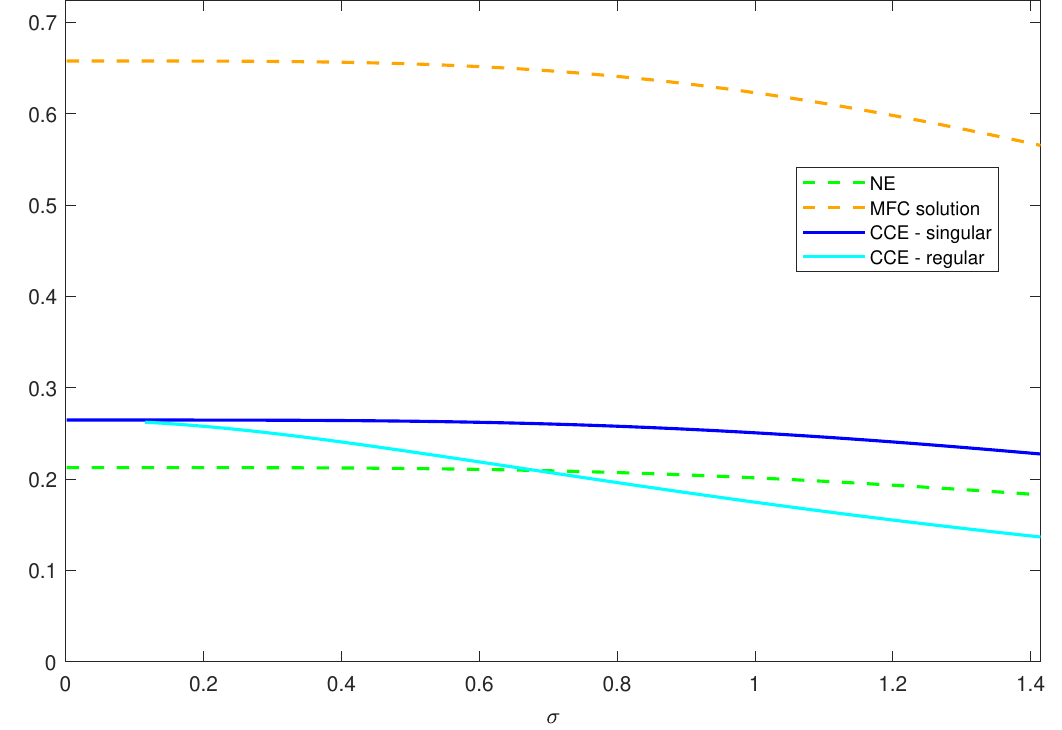}
    \caption{Reward associated to the best performing CCE $(Z,\lambda^s,\theta_\infty) \in \mathcal{G}^s$ (in blue) and $(Z,\lambda^r,\theta_\infty) \in \mathcal{G}^r$ (in cyan), when $\theta_\infty \sim \Gamma(u,v)$, $(u,v) \in \R_+^2$, compared with MFC solution (dashed orange line) and mean-field NE (dashed green line).
    Here, $\delta = 1$, $q=0.5$, $\alpha = 0.3$ and $\beta = 0.4$.
    $\sigma$ varies from $0$ to $\sqrt{2\delta}$.}
    \label{fig:cce_sigma}
\end{figure}

\begin{figure}[!ht]
    \centering
    \includegraphics[trim= 1.5cm 5cm 1.5cm 5cm, clip, width =0.8\textwidth]{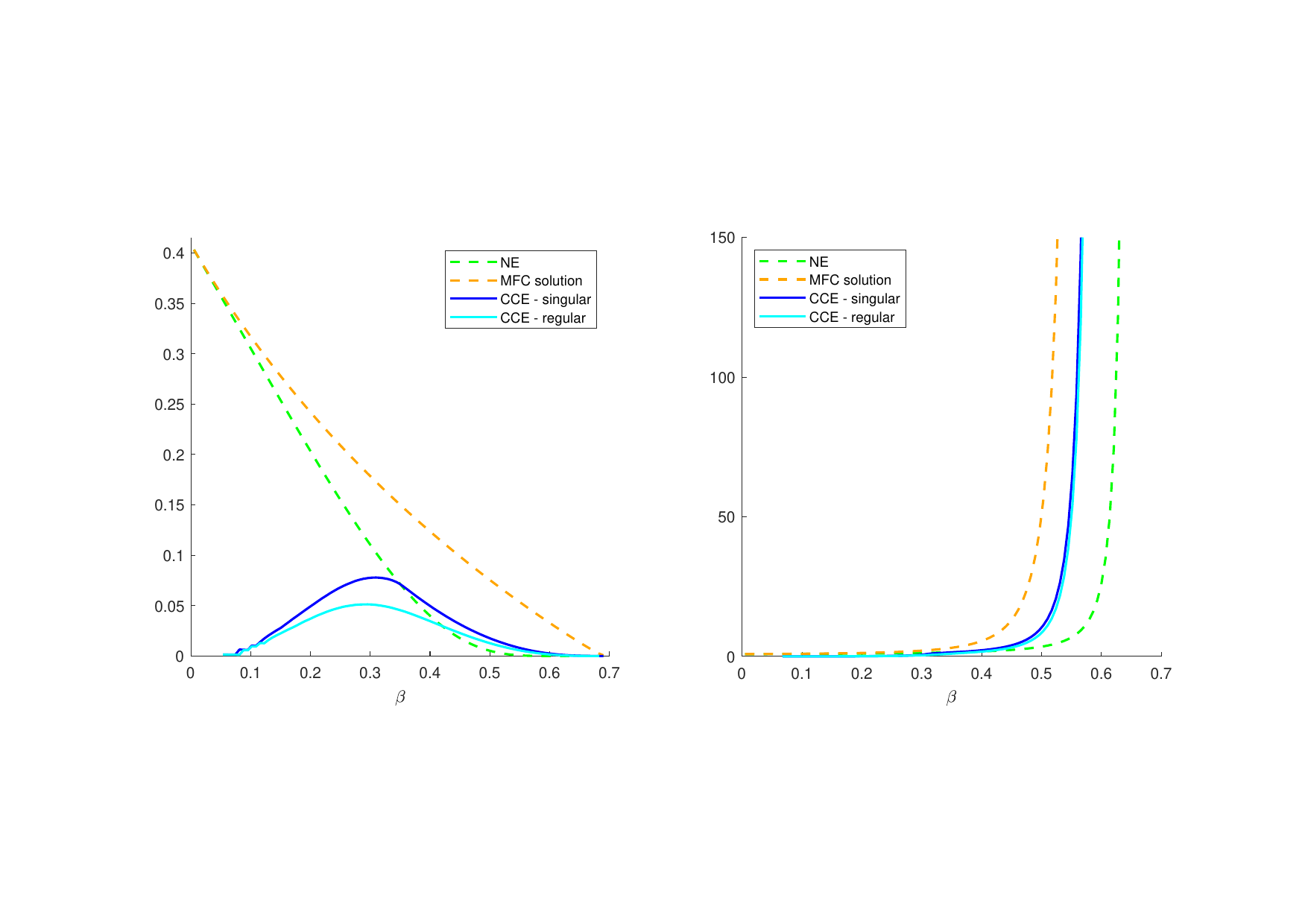}
    \caption{Reward associated to the best performing CCE $(Z,\lambda^s,\theta_\infty) \in \mathcal{G}^s$ (in blue) and $(Z,\lambda^r,\theta_\infty) \in \mathcal{G}^r$ (in cyan), for $\theta_\infty \sim \Gamma(u,v)$, $(u,v) \in \R_+^2$, compared with MFC solution (dashed orange line) and mean-field NE (dashed green line).
    Here, $\alpha=0.3$ and $0 \leq \beta \leq 1-\alpha$.
    On the left, $\delta=1$, $q_u=1$, $\sigma = 1$, so that condition \eqref{eq:mfc:condition_zero_infinity} is satisfied.
    On the right, $\delta=0.1$, $q_u=2$, $\sigma = 0.2$, so that condition \eqref{eq:mfc:condition_zero_infinity} is not satisfied.
    }
    \label{fig:cce_beta}
\end{figure}

We notice that the payoff associated to the Nash equilibrium for the ergodic MFG is strictly less than the reward given by the solution of the MFC problem.
This can be directly deduced from the fact that the the stationary mean $\hat{\theta}$ associated to the MFC solution $\hat{\nu}$ is the unique maximizer of the function $f(\theta)$ given by \eqref{mfc:mean_dependence_function}, which can be equivalently expressed as $f(\theta)=\Tilde{c}_{\alpha+\beta}\theta^{\alpha + \beta} - c_1\theta$.
Since by Proposition \ref{mfg:thm:nash_eq} the value of the ergodic MFG at the Nash equilibrium $(\nu^*,\theta^*)$ can be expressed as $f(\theta^*)$, we deduce
\begin{equation}\label{eq:price_of_anarchy}
    \J(\nu^*,\theta^*) = f(\theta^*) < f(\hat{\theta}) =  \J(\hat{\nu},\E[X^{\hat{\nu}}_\infty]).
\end{equation}
While we limit to empirically observe this phenomenon, we point out that it is widely expected and that it is coherent with the findings of \cite{campi2023LQ} for linear-quadratic mean-field games.

\smallskip
Finally, we consider the case where either either there does not exist any mean-field NE or there exist infinitely many (i.e. $\alpha+\beta = 1$, depending on whether the relation \eqref{mfg:condition:infinitely_many} is satisfied, see Theorem \ref{mfg:thm:nash_eq}).
As noticed in Remark \ref{mfg:rmk_assumptions}, the optimality inequalities \eqref{cce:regular:optimality:condition_moments} and \eqref{cce:singular:optimality_moments} are still valid.
By imposing $\beta = 1 - \alpha$, and noticing that the parameter $v$ is not anymore relevant in the inequalities, equations \eqref{cce:regular:optimality_gamma} and \eqref{cce:singular:optimality_gamma} can be written in terms of $u$ only:
\begin{align}
    & c_{\beta} \left( \frac{\Gamma(1- \alpha +u)}{\Gamma(u)} \right)^{\frac{1}{1-\alpha}} \leq \left( c_{\alpha+\beta}   \Gamma(u) - c_1\right) u ,  \label{cce:regular:optimality_gamma_1} \\
    & c_{\beta} \left( \frac{\Gamma(1-\alpha+u)}{\Gamma(u)} \right)^{\frac{1}{1-\alpha}} \leq \left( \Tilde{c}_{\alpha+\beta} \Gamma(u) - c_1\right) u. \label{cce:singular:optimality_gamma_1}
\end{align}
We observe that there exist  maximal values $u^*_r$ and $u^*_s$, depending on $\alpha$, so that the inequalities \eqref{cce:regular:optimality_gamma_1} and \eqref{cce:singular:optimality_gamma_1} are verified by any $0 < u \leq u^*_r$ and $0 < u \leq u^*_s$, respectively.
Figure \ref{fig:cce_alpha_beta_1} plots such maximal values $u^*_r$ and $u^*_s$ as functions of $\alpha \in (0,1)$.
Therefore, we observe existence of mean-field CCEs even in the case in which there does not exist any mean-field NE.
We notice that, when considering correlated stationary strategies $(Z,\lambda^s,\theta_\infty)$, there exist a value $\bar{\alpha}$ so that that the inequality \eqref{cce:regular:optimality_gamma_1} is verified for any $u > 0$, i.e. $u^* = \infty$.
It can be numerically shown that such value $\bar{\alpha}$ is the unique solution of \eqref{mfg:condition:infinitely_many}, for fixed $\delta$, $\sigma$ and $q$.
We can explain this phenomenon as follows: for $\alpha = \bar{\alpha}$, by Theorem \ref{mfg:thm:nash_eq}, for any $\theta>0$ the pair $(\nu^{a(\theta)},\theta)$ is a mean-field NE, where $\nu^{a(\theta)}$ is the policy that reflects the process $X^{\nu^{a(\theta)}}$ upwards at the level $a(\theta) = \frac{2\delta}{2\delta + \sigma^2} \theta$.
Therefore, any correlated stationary strategy $(Z,\lambda^s,\theta_\infty) \in \mathcal{G}^s$ is just a randomization, or a mixture, of mean-field NE, since $\lambda^s$ reflects the process $X^{\lambda^s}$ at the same barrier $a(\theta_\infty) = \frac{2\delta}{2\delta + \sigma^2} \theta_\infty$.
To put in other terms, the pair $(a(\theta_\infty),\theta_\infty)$ is supported on the set of mean-field NEs.
This implies that the optimality condition is satisfied by any $\theta_\infty$ so that the optimality inequality \eqref{cce:singular:optimality_moments} holds true, and so by any $(Z,\lambda^s,\theta_\infty) \in \mathcal{G}^s$.

\begin{figure}[!ht]
    \centering
    \includegraphics[trim= 4.5cm 1.5cm 4.5cm 1.5cm, clip, width =0.5\textwidth,angle=-90]{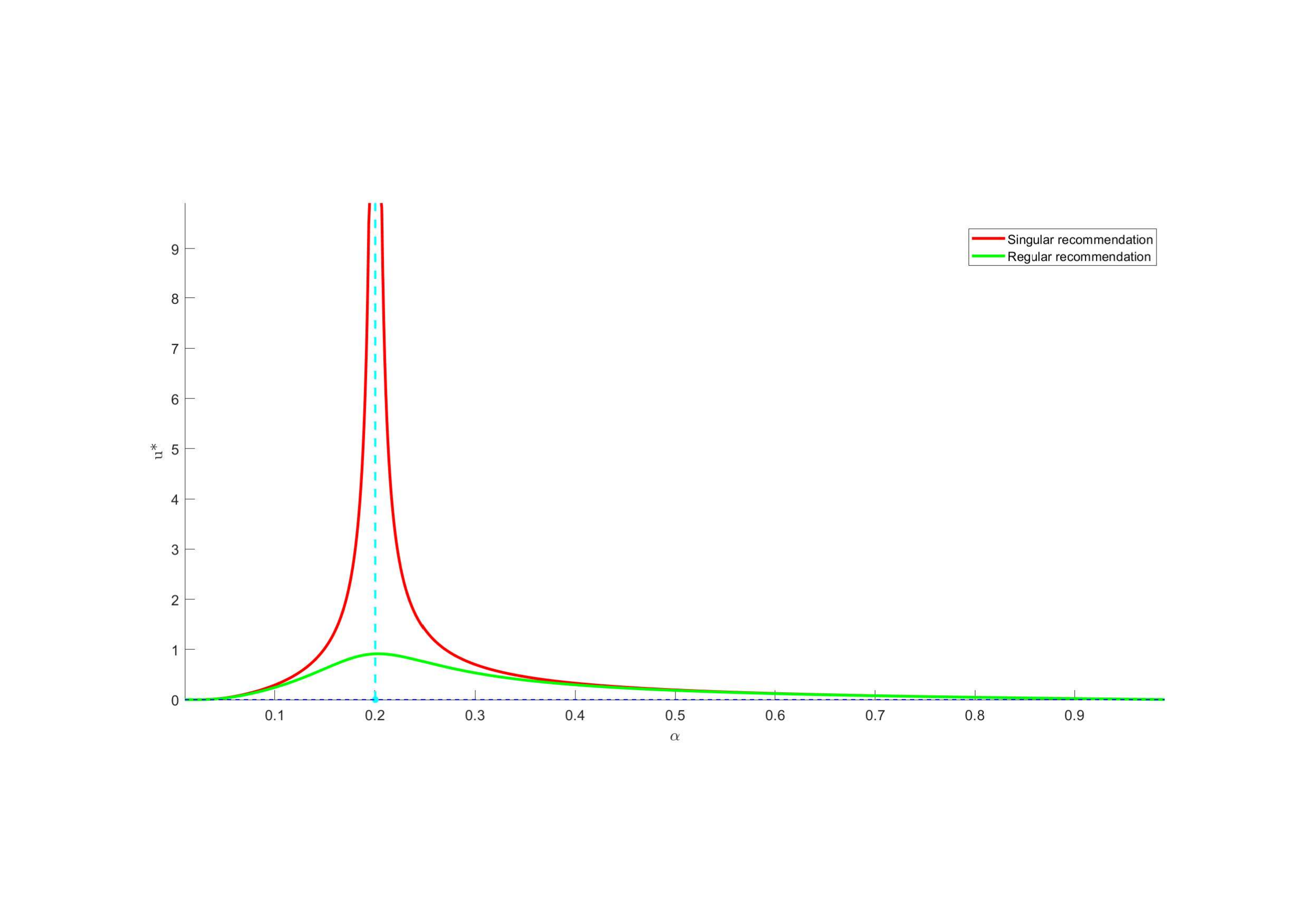}
    \caption{Value of $u^*$ as $\alpha$ varies in $[0,1]$, both for regular and singular recommendations.
    The blue dashed line is located at the value of $\alpha=\bar{\alpha}$ which satisfies \eqref{mfg:condition:infinitely_many}. Here, $\delta = 0.1$, $\sigma = 0.2$, $q = 2$.}
    \label{fig:cce_alpha_beta_1}
\end{figure}

\newpage

\addcontentsline{toc}{section}{Appendix}
\section*{Appendix}\label{sec:appendix}

\subsection*{Control-theoretic Results}

\begin{proof}[Proof of Lemma \ref{geometric_bm:reflected:lemma}]
Point \ref{geometric_bm:reflected:lemma:finite_moments} follows from direct calculations.
As for point \ref{geometric_bm:reflected:lemma:reflection}, the solution of the Skorohod problem follows from standard arguments (see, e.g. \cite[Proposition 3.6.16]{karatzas_shreve}).
As for ergodicity, note that the derivative of the speed measure of the process $X^{\nu^a}$ is given by $m'(x) = \frac{2}{\sigma^2} x^{ - \frac{2\delta}{\sigma^2} }\1_{[a,\infty)}(x)$, which is integrable over $[0,\infty)$ for any $a>0$.
By \cite[Paragraph 36]{borodin2002handbook}, the process $X^{\nu^a}$ is ergodic and admits $\frac{1}{m([a,\infty))}m'(x) dx = p_a(dx)$ given by \eqref{gemetric_bm:reflected:density} as the unique invariant distribution.

As for \ref{geometric_bm:reflected:lemma:integrability}, set $L=1+a$ and observe that, since $a < L$ and the control $\nu^a$ never acts when the process lies in the region $\{x: \; x > a\}$, it holds $ \supp( d\nu^a ) \cap \{X^{\nu^a} \geq L \} = \emptyset$, $\prob$-a.s.
As for \ref{geometric_bm:reflected:lemma:integrability}, set $L=1+a$ and observe that, since $a < L$ and the control $\nu^a$ never acts when the process lies in the region $\{x: \; x > a\}$, it holds $ \supp( d\nu^a ) \cap \{X^{\nu^a} \geq L \} = \emptyset$, $\prob$-a.s.
Reasoning similarly to \cite[Lemma 2]{cao2023stationary}, let $(\tau^i_k, \bar{\tau}^i_k)_{k\geq1}$ be a sequence of stopping times such that $0\leq \tau_1 \leq \bar{\tau}_1 \leq \tau_2 \leq \bar{\tau}_2 \leq \dots $, $\prob$-a.s., and such that $\{(t,\omega) \in [0,T]\times \Omega: \; X^{\nu^a}_t \geq L \} = \bigcup_{k \geq 1} [\tau^i_k,\bar{\tau}^i_k]$.
Observe that $(\tau_k,\bar{\tau}_k)$ can be defined recursively by setting $\tau_0 = \bar{\tau}_0 = 0$ and
\begin{align*}
    &\tau_k = \inf\{ t > \bar{\tau}_{k-1} : \; X^{\nu^a}_t \geq L \}, \quad \bar{\tau}_k = \inf\{ t > \tau_k : \; X^{\nu^a}_t \leq L \},
\end{align*}
so that, in particular, it holds $X^{\nu^a}_{\tau_k}=\xi$ if $\tau_k=0$ or $X^{\nu}_{\tau_k}=L$ if $\tau_k>0$.
Set $Y^a_t = (X^{\nu^a}_t)^2 - L^2$, so that, since $X^{\nu^a}$ is always non-negative, it holds $\{ X^{\nu^a} \geq L \} = \{ Y^a \geq 0 \}$.
By using the sequence $(\tau_k,\bar{\tau}_k)_{k \geq 1}$ and the process $Y^a$, we get
\begin{multline*}
    \E [ (X^{\nu^a}_t)^2 ] = \E \Big[ (X^{\nu^a}_t)^2 \mathds{1}_{ \{ X^{\nu^a}_t \leq L \} } + (X^{\nu^a}_t)^2 \mathds{1}_{ \{ X^{\nu^a}_t \geq L \} } \Big] \leq L ^2 +  \E \bigg[ \sum_{k = 1}^\infty \mathds{1}_{(\tau_k,\bar{\tau}_k ) }(t) (X^{\nu^a}_t)^2 \bigg] \\
    \leq L ^2 +  \sum_{k = 1}^\infty \E \bigg[ \mathds{1}_{ (\tau_k,\bar{\tau}_k ) }(t) ( Y^a_t + L^2) \bigg] \leq 2 L ^2 +  \sum_{k = 1}^\infty \E \bigg[ \mathds{1}_{ \{ \tau_k < t \} }  Y^a_{t \wedge \bar{\tau}_k }  \bigg],
\end{multline*}
by using the identity
\[
\mathds{1}_{ (\tau_k,\bar{\tau}_k ) }(t) Y^a_t = \mathds{1}_{ \{ \tau_k < t \} }(  \mathds{1}_{ \{ \bar{\tau}_k > t \} } Y^a_t + 0 \cdot \mathds{1}_{ \{ \bar{\tau}_k \leq  t \}} )= \mathds{1}_{ \{ \tau_k < t \} } Y^a_{t \wedge \bar{\tau}_k }
\]
since $Y^a_{\bar{\tau_k}} = 0$ for any $k \geq 1$.
By applying Ito's formula on each time interval $[\tau_k, \bar{\tau}_k]$ and exploiting  $2\delta - \sigma^2 > 0$ by Assumption \ref{assumption:dissipativity}, we deduce that
\[
Y^a_{t \wedge \bar{\tau}_k} \leq Y^a_{\tau_k} + 2\sigma \int_{ \tau_k }^{ t \wedge \bar{\tau}_k }  \left(  Y^a_s + L^2 \right) d W_s, \quad k \geq 1.
\]
Plugging this expression in the last inequality above, we get 
\begin{multline*}
    \E [ (X^{\nu^a}_t)^2 ] \leq 2 L ^2 +  \sum_{k = 1}^\infty \E \bigg[ \mathds{1}_{ \{ \tau_k < t \} } \left( Y^a_{\tau_k}  + 2\sigma \int_{\tau_k}^{ t \wedge \bar{\tau}_k }  \left( Y^a_s + L^2 \right) d W_s  \right) \bigg] \\
    \leq 2 L ^2 +  \sum_{k = 1}^\infty \E \bigg[ \mathds{1}_{ \{ \tau_k < t \} } Y^a_{\tau_k}  \bigg]  = 2 L ^2 +  \E \bigg[ \sum_{k = 1}^\infty  \mathds{1}_{ \{ \tau_k < t \} } Y^a_{\tau_k}  \bigg] ,
\end{multline*}
where in the second equality we applied tower property with respect to $\F_{\tau_k}$ to handle the stochastic integral, and we exchanged series and integral thanks to the non-negativity of $\mathds{1}_{ \{ \tau_k < t \} } Y^a_{\tau_k}$.
We conclude by noticing that one has either $Y^a_{\tau_k} = \xi^2 - L^2$ if $\tau_k = 0$ or $(Y^a_{\tau_k})^2 = 0$ $\prob$-a.s if $\tau_k > 0$, for any $k \geq 1$, and that one can have $\tau^k = 0$ only for $k = 1$.
Thus, we conclude
\[
\E [ (X^{\nu^a}_t)^2 ] \leq C( L ^2 + \E[\xi^2]) \leq C(1+a^2).
\]
By definition of $L$ and Assumption \ref{assumption:dissipativity}, the estimate follows.
Finally, we have, for any $T > 0$
\[
\nu^a_T = X^{\nu^a}_T - \xi +\delta \int_0^T X^{\nu^a}_s ds -\int_0^T \sigma X^{\nu^a}_s dW_s.
\]
By taking the expectation, applying Jensen inequality and taking advantage of It\^{o} isometry, we find
\begin{align*}
& \frac{1}{T^2}\E[\vert \nu^a_T \vert^2 ] \leq \frac{c}{T^2}\left( 1 + \E[\vert X^{\nu^a}_T \vert^2] + \E\left[ \left( \int_0^T  X^{\nu^a}_s  ds \right)^2 \right] +  \E\left[ \left( \int_0^T X^{\nu^a}_s dW_s \right)^2 \right] \right) \\
& \leq \frac{c}{T^2}\left( 1 + (T + T^2)\sup_{T \geq 0}\E[(X^{\nu^a}_T)^2]  \right) \leq c(1 + a^2),
\end{align*}
where last inequality follows from previous estimate.
\end{proof}

\begin{proof}[Proof of Lemma \ref{lemma:ergodic_control}.]
We reason as in the proof of \cite[Theorem 2]{cao2023stationary}.
Recall from \eqref{control:auxiliary:running_cost} the definition of the function $g(x,p,\lambda)$.
Let $\mathcal{T}$ be the set of $\mathbb{F}$-stopping times.
Consider the auxiliary optimal stopping problem
\begin{equation}\label{eq optimal stopping problem}
    u(x,p,\lambda):= \inf_{\tau \in \mathcal{T}} \E_x \bigg[ \int_0^\tau e^{-\delta t } g_x(\hat{X}_t, p,\lambda) dt  + q e^{-\delta \tau } \bigg],
\end{equation}
where $\hat{X}=(\hat{X}_t)_{t \geq 0}$ is defined by
\[
d\hat{X}_t=(-\delta + \sigma^2 )\hat{X}_t dt + \sigma \hat{X}_tdW_t,
\]
and $\E_x[\cdot]$ denotes $\E[\cdot \vert \widehat{X}_{0} = x]$, $x \in \R_+$.
Let $\hat{\phi}_0$ is the non-increasing fundamental solution of
\[
\frac{1}{2}\sigma^2 x^2 \phi_{xx}(x) + (-\delta + \sigma^2) x \phi_x(x) -\delta \phi(x) = 0,
\]
and let $\hat{m}'$ be the density of the speed measure $\hat{m}$ of the process $\hat{X}$.
By well-known results (see, e.g., \cite[Theorem 5]{alvarez2001reward}), it can be shown that, if there exists a unique $a^*=a^*(p,\lambda) > 0$ solution to
\begin{equation}\label{lemma:ergodic_control:optimal_stopping}
\int_{a^*}^{+\infty} \hat{\phi}_0(y)\left( \alpha p y^{\alpha - 1} - (q\delta - \lambda) \right) \hat{m}'(y) dy = 0,
\end{equation}
then the value function $u(\cdot,p,\lambda)$ is $C^1(\mathbb{R}_+)$ with $u_{xx}(\cdot,p,\lambda) \in L^{\infty}_{\text{loc}}(\R_+)$, and that the optimal stopping time is given by $\hat{\tau}(x,p,\lambda):=\inf \{t\geq 0 \, | \, \hat{X}_t \leq a^*(p,\lambda) \} $. 
By explicit calculations, we have $\hat{\phi}_0(y) = y^{-1}$ and $\hat{m}'(y) = \frac{2}{\sigma^2} y^{ - \frac{2\delta}{\sigma^2} }$, so that \eqref{lemma:ergodic_control:optimal_stopping} becomes 
\begin{equation}\label{lemma:ergodic_control:optimal_barrier}
\int_{a^*}^{+\infty} \left( \alpha p y^{\alpha - 1} - (q\delta - \lambda) \right)y^{-\frac{2\delta}{\sigma^2} - 1} dy = 0.
\end{equation}
Since, by assumption, $q\delta - \lambda > 0$, there exists a unique solution $a^*(p,\lambda)$ given by \eqref{lemma:ergodic_control:barrier}.
Moreover, relying on optimality of the $\hat{\tau}$, on the dynamic programming principle and on the aforementioned regularity of $u(\cdot,p,\lambda)$, direct computations show that $u(\cdot,p,\lambda)$ solves
\begin{equation*}
\left\{ \, \begin{aligned}
    \frac{\sigma^2}{2}u_{xx} +(-\delta+\sigma^2) x u_x -\delta u +g_x(x,p,\lambda) & = 0, &&  x > a^*(p,\lambda), \\
    u & = q, && x \leq a^*(p,\lambda).
\end{aligned} \right.
\end{equation*}
Let now $(v(\cdot,p,\lambda),\gamma^*(p,\lambda))$ defined by
\begin{equation}
    v(x,p,\lambda) = \int_{a^*(p,\lambda)}^x u(x',p,\lambda)dx', \quad \gamma^* = g(a^*(p,\lambda),p,\lambda) +q\delta a^*(p,\lambda).
\end{equation}
Since $u(\cdot,p,\lambda) \in C^1(\R_+)$, $v \in C^2(\R_+)$.
By explicit computations, one can verify that the pair $(v(\cdot,p,\lambda),\gamma^*(p,\lambda))$ solves the following PDE 
\begin{equation*}
\left\{ \, \begin{aligned}
    \frac{\sigma^2x^2}{2}v_{xx} -\delta x v_x + g(x,p,\lambda) - \gamma^*(p,\lambda) & = 0, &&  x > a^*(p,\lambda), \\
    v_x & = q, && x \leq a^*(p,\lambda).
\end{aligned} \right.
\end{equation*}
Consider the pair $(X^{\nu^{*}},\nu^{*})$ solution to the Skorohod reflection problem at level $a^*(p,\lambda)$.
As $X^{\nu^{*}}_t \geq a^*(p,\lambda)$ $\prob$-a.s. and $\nu^{*}$ increases only on $\{X^{\nu^{*}} = a^*(p,\lambda)\}$, a standard verification theorem (\cite[Theorem 3.2]{jack_zervos2006ergodic}) implies that $\gamma^*(p,\lambda)$ is the value of the ergodic singular stochastic control problem with payoff \eqref{lemma:ergodic_control:payoff} and $\nu^{*}$ is optimal.
\end{proof}

\begin{proof}[Proof of Lemma \ref{lemma:first_order_condition}]
We deal with the case $q'<\infty$.
The case $q'=\infty$ is completely analogous.
We start by proving \ref{lemma:first_order_condition:necessary}.
Let $\hat{\nu}$ be optimal for the control problem with dynamics \eqref{mf:dynamics} and payoff functional $\Tilde{J}(\cdot,p,\lambda)$.
Recall the definition of the function $g(x,p,\lambda)$ in \eqref{control:auxiliary:running_cost}.
For any $\nu \in \A_{mf}$, $\eps \in (0,\frac{1}{2}]$, set $\nu^\eps = \eps\nu + (1-\eps)\hat{\nu}$.
Set
\begin{equation}
\begin{aligned}
    f(\eps,T) & = \frac{1}{\eps}\left(  \frac{1}{T}\E\left[ \int_0^T g(X^{\nu^\eps}_t,p,\lambda) dt - q \nu^\eps_T \right] -  \frac{1}{T}\E\left[ \int_0^T g(X^{\hat{\nu}}_t,p,\lambda)dt - q \hat{\nu}_T\right]  \right).
\end{aligned}
\end{equation}
For any $\eps \in (0,\frac{1}{2}]$ and $T > 0$, it holds
\begin{align*}
    f(\eps,T) &  = \frac{1}{\eps} \frac{1}{T} \left( \E\left[ \int_0^T \Big( \int_0^1 g_x(X^{\hat{\nu}}_t + \tau (X^{\nu^\eps}_t - X^{\hat{\nu}}_t ),p,\lambda)d\tau  \Big) (X^{\nu^\eps}_t - X^{\hat{\nu}}_t ) dt - q (\nu^\eps_T - \hat{\nu}_T) \right] \right) \\
    & = \frac{1}{T} \left( \E\left[ \int_0^T \Big( \int_0^1 g_x(X^{\hat{\nu}}_t + \tau (X^{\nu^\eps}_t - X^{\hat{\nu}}_t ),p,\lambda)d\tau  \Big) (X^{\nu}_t - X^{\hat{\nu}}_t) dt - q ( \nu_T - \hat{\nu}_T) \right] \right).
\end{align*}

We claim
\begin{equation}\label{first_order:dominated} 
    \lim_{\eps \downarrow 0} f(\eps,T) = \frac{1}{T} \E\left[ \int_0^T g_x(X^{\hat{\nu}}_t,p,\lambda) (X^{\nu}_t - X^{\hat{\nu}}_t) dt - q (\nu_T -\hat{\nu}_T ) \right]
\end{equation}
uniformly in $T$.
Indeed,
\begin{align*}
    & \left\vert f(\eps,T ) - \frac{1}{T} \E\left[ \int_0^T g_x(X^{\hat{\nu}}_t,p,\lambda) (X^{\nu}_t - X^{\hat{\nu}}_t) dt - q (\nu_T -\hat{\nu}_T ) \right] \right\vert \\
    & \leq \left\vert  \frac{1}{T} \E\left[ \int_0^T  (X^{\hat{\nu}}_t - X^{\nu}_t ) \int_0^1 \left( g_x(X^{\nu^\eps}_t + \tau (X^{\hat{\nu}}_t - X^{\nu^\eps}_t ),p,\lambda) -  g_x(X^{\hat{\nu}}_t,p,\lambda)  \right) d\tau dt  \right] \right\vert \\
    & \leq   \frac{1}{T} \E\left[ \int_0^T  \vert X^{\hat{\nu}}_t - X^{\nu}_t \vert \int_0^1 \left\vert g_x(X^{\nu^\eps}_t + \tau (X^{\hat{\nu}}_t - X^{\nu^\eps}_t ),p,\lambda) -  g_x(X^{\hat{\nu}}_t,p,\lambda)  \right\vert  d\tau dt  \right].
\end{align*}
By continuity of $g_x(x,p,\lambda)$ in $x$, the inner integral converges to $0$ as $\eps \downarrow 0$ for any $t \geq 0$, $\prob$-a.s.
By linearity of the dynamics \eqref{mf:dynamics}, since $X^\nu_t$ is positive for any $t$, it holds
\begin{align*}
    & X^{\nu^\eps}_t + \tau (X^{\hat{\nu}}_t - X^{\nu^\eps}_t ) = \tau X^{\hat{\nu}}_t + (1-\tau)  X^{\nu^\eps}_t \geq \frac{1}{2}X^{\nu^\eps}_t = \frac{1}{2}(\eps X^{\nu}_t +(1-\eps)X^{\hat{\nu}}_t ) \geq \frac{1}{2} X^{\hat{\nu}}_t \geq \frac{1}{4}X^{\hat{\nu}}_t.
\end{align*}
Since $\vert g_{xx}(y,p,\lambda) \vert = \alpha(1-\alpha) p x^{\alpha -2} \leq c \vert x\vert^{\alpha-2} $ for any $y \geq x$, $g_x(y,p,\lambda)$ is Lipschitz on $[x,+\infty)$ with Lipschitz constant $c \vert x\vert^{\alpha-2}$.
Then, it follows
\begin{equation}\label{lemma:first_order_condition:uniform_estimate}
\begin{aligned}
    & \vert f(\eps,T ) - g(T) \vert \leq  \frac{c}{T} \E\left[ \int_0^T  \vert X^{\hat{\nu}}_t - X^{\nu}_t \vert \vert X^{\hat{\nu}}_t \vert^{\alpha - 2} \int_0^1 \left\vert X^{\nu^\eps}_t + \tau (X^{\hat{\nu}}_t - X^{\nu^\eps}_t ) - X^{\hat{\nu}}_t \right\vert  d\tau dt  \right] \\
    & \leq c \frac{1}{T} \E\left[ \int_0^T \vert X^{\hat{\nu}}_t - X^{\nu}_t \vert \cdot \vert X^{\nu^\eps}_t - X^{\hat{\nu}}_t \vert \vert X^{\hat{\nu}}_t \vert^{\alpha - 2} dt  \right] \\
    & \leq c \left( \sup_{T > 0} \frac{1}{T} \E\left[ \int_0^T \vert X^{\hat{\nu}}_t \vert^{2q} dt  \right] + \sup_{T > 0} \frac{1}{T} \E\left[ \int_0^T \vert X^{\nu}_t \vert^{2q} dt  \right] \right)\sup_{T > 0} \left( \frac{1}{T} \E\left[ \int_0^T \vert (X^{\hat{\nu}}_t)^{\alpha -2} \vert^{q'} dt  \right] \right)^\frac{1}{q'} \eps ,
\end{aligned}
\end{equation}
where we used H\"{o}lder's inequality together with conditions \eqref{lemma:first_order_condition:admissible_controls} and \eqref{lemma:first_order_condition:integrability_condition}.
On the other hand, by taking the limit with respect to $T$, it holds
\begin{align*}
& \liminf_{T \uparrow \infty} f(\eps,T) \leq \frac{1}{\eps}\left(  \liminf_{T \uparrow \infty}\frac{1}{T}\E\left[ \int_0^T g(X^{\nu^\eps}_t,p,\lambda) dt - q \nu^\eps_T \right] \right.\\
& \left. - \liminf_{T \uparrow \infty}\frac{1}{T}\E\left[ \int_0^T g(X^{\hat{\nu}}_t,p,\lambda)dt - q \hat{\nu}_T\right]  \right) = \frac{1}{\eps} ( \Tilde{\J}(\nu^\eps) - \Tilde{\J}(\hat{\nu})) \leq 0,    
\end{align*}
by using the inequality $\liminf_n a_n - \liminf_n b_n \geq \liminf_n (a_n - b_n)$ and by optimality, for any $\eps \in (0,\frac{1}{2}]$.
Lemma \ref{lemma:moore_osgood_liminf} then implies 
\[
\liminf_{T \uparrow \infty} \frac{1}{T} \E\left[ \int_0^T g_x(X^{\hat{\nu}}_t,p,\lambda) (X^{\nu}_t - X^{\hat{\nu}}_t) dt - q (\nu_T -\hat{\nu}_T ) \right] \leq 0. 
\]
This concludes the proof of point \ref{lemma:first_order_condition:necessary}.

\medskip
As for point \ref{lemma:first_order_condition:sufficient}, assume that both \eqref{lemma:first_order_condition:limit} and \ref{lemma:first_order_condition:necessary} hold.
Then, by using the inequality $\limsup_{n} z_n -\liminf_n x_n \geq \limsup_{n}(z_n - y_n)$ (see \cite[Equation (4.25)]{karatzas1984monotone}) and concavity of $g$ jointly in $x$, it holds
\begin{align*}
    & \Tilde{\J}(\hat{\nu},p,\lambda) - \Tilde{\J}(\nu,p,\lambda) \geq \limsup_{T \uparrow \infty} \frac{1}{T}\E\left[ \int_0^T\left( g(X^{\nu}_t,p,\lambda) - g(X^{\hat{\nu}}_t,p,\lambda)\right) dt - q(\hat{\nu}_T - \nu_T) \right] \\
    & \geq \limsup_{T \uparrow \infty} \frac{1}{T}\E\left[ \int_0^T g_x(X^{\nu}_t,p,\lambda)(X^{\hat{\nu}}_t - X^{\nu}_t) dt - q(\hat{\nu}_T - \nu_T) \right] \geq 0,
\end{align*}
which implies that $\hat{\nu}$ is optimal within $\A_{2q}$.
If instead we assume \eqref{lemma:first_order_condition:ineq_liminf}, the claim follows from sublinearity of the inferior limit.
\end{proof}

\begin{proof}[Proof of Lemma \ref{central_planner:lemma:envelope_theorem}]
We verify the identity by explicit calculations.
First notice that
\begin{align*}
    \lambda(\theta) & = q\delta - \tonde{\frac{2\delta+\sigma^2}{2\delta}  }^{1- \alpha} \frac{2\delta\alpha}{2\delta + \sigma^2(1-\alpha)} \theta^{\alpha + \beta - 1} = q\delta - \alpha \liminf_{T \uparrow \infty} \frac{1}{T}\E\left[ \int_0^T (X^{\nu^{a(\theta)}}_t)^\alpha \theta^{\beta - 1} \right].
\end{align*}
On the other hand, we have
\begin{align*}
    f'(\theta) & = -q \delta + (\alpha + \beta )\frac{2\delta + \sigma^2}{2\delta + \sigma^2(1-\alpha)}\left( \frac{2\delta}{2\delta+\sigma^2}\right)^\alpha \theta^{\alpha+\beta-1} \\
    & = -q\delta + (\alpha + \beta )\liminf_{T \uparrow \infty} \frac{1}{T}\E\left[ \int_0^T (X^{\nu^{a(\theta)}}_t)^\alpha \theta^{\beta - 1} \right].
\end{align*}
By summing the two terms, we find
\[
f'(\theta) + \lambda(\theta) = \beta \liminf_{T \uparrow \infty} \frac{1}{T}\E\left[ \int_0^T (X^{\nu^{a(\theta)}}_t)^\alpha \theta^{\beta - 1} \right] = \liminf_{T \uparrow \infty} \frac{1}{T}\E\left[\int_0^T \pi_\theta(X^{\nu^{a(\theta)}}_t,\theta)dt \right].
\]
\end{proof}

\subsection*{Integrability Results}

\begin{lemma}\label{lemma:inv_gamma}
Let $\theta > 0$ and let $m'_{\infty,\theta}(x)$ be given by \eqref{inv_gamma:speed_measure}.
For any $k \geq 0$, the $\int_0^\infty x^k m'_{\infty,\theta}(x)dx$ is finite if and only if $2\delta - (k-1)\sigma^2 > 0$.
If so, it holds
\begin{equation}\label{inv_gamma:k_moment:expression}
    \int_0^\infty x^k m'_{\infty,\theta}(x)dx = \theta^{k - \frac{2\delta}{\sigma^2} -1 } \tonde{\frac{2\delta}{\sigma^2}}^{k -1 - \frac{2\delta}{\sigma^2} } \Gamma\tonde{\frac{2\delta}{\sigma^2} - k + 1}.
\end{equation}
\end{lemma}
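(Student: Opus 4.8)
The plan is to recognize $\int_0^\infty x^k m'_{\infty,\theta}(x)\,dx$ as an inverse-gamma-type integral and to evaluate it through a single change of variables that reduces it to Euler's integral for the Gamma function. To lighten the notation I would set $a:=\frac{2\delta}{\sigma^2}$, so that, factoring out the multiplicative constant $\frac{2}{\sigma^2}\exp(a\theta)$ in the definition \eqref{inv_gamma:speed_measure}, the quantity to be analysed is the core integral
\[
I_k:=\int_0^\infty x^{k-a-2}\exp\left(-\frac{a\theta}{x}\right)dx .
\]

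First I would settle the dichotomy for finiteness, which is governed entirely by the tail at $+\infty$. Near $x=0^+$ the factor $\exp(-a\theta/x)$ decays faster than any power, so the integrand is bounded there and contributes no divergence, whatever the value of $k\geq 0$. Near $x=+\infty$ the exponential factor tends to $1$, so the integrand is asymptotically comparable with $x^{k-a-2}$; hence $I_k$ converges at infinity if and only if $k-a-2<-1$, that is $a>k-1$, which is exactly the stated condition $2\delta-(k-1)\sigma^2>0$. Divergence in the complementary regime follows from the same tail comparison, establishing the claimed equivalence.

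On the region where the condition holds I would perform the substitution $y=\frac{a\theta}{x}$, for which $x=\frac{a\theta}{y}$ and $dx=-\frac{a\theta}{y^2}\,dy$, mapping $(0,\infty)$ onto itself with reversed orientation. Substituting and gathering the powers of $a\theta$ turns the integral into $(a\theta)^{k-a-1}\int_0^\infty y^{a-k}e^{-y}\,dy=(a\theta)^{k-a-1}\Gamma(a-k+1)$, the Gamma integral being convergent precisely under $a-k+1>0$, in agreement with the integrability threshold found above. Re-expanding $a=\frac{2\delta}{\sigma^2}$ then gives exactly the right-hand side of \eqref{inv_gamma:k_moment:expression}.

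The argument is essentially routine; the only point demanding care is the bookkeeping of the exponents of $\theta$ and $\frac{2\delta}{\sigma^2}$ under the change of variables, together with checking that the convergence threshold of the Gamma integral matches the integrability condition isolated in the first step. I would also note that the overall normalizing constant $\frac{2}{\sigma^2}\exp(a\theta)$ is immaterial to the downstream applications, since these only invoke the lemma through ratios of moments of the normalized kernel $p^r_\infty(dx,\theta)$ in \eqref{cce:regular:limit_kernel}, where such a prefactor cancels.
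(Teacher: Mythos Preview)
Your proof is correct and follows essentially the same route as the paper's: the paper also reduces the integral to Euler's Gamma integral, though via two successive substitutions ($z=x/\theta$ followed by $t=\frac{2\delta}{\sigma^2}\cdot\frac{1}{z}$) rather than your single substitution $y=a\theta/x$, which simply composes them. Your treatment is in fact slightly more complete, since you explicitly argue both directions of the finiteness equivalence (the paper's proof only computes under the assumption $2\delta-(k-1)\sigma^2>0$), and you correctly flag that the constant prefactor $\frac{2}{\sigma^2}e^{a\theta}$ in $m'_{\infty,\theta}$ is dropped in \eqref{inv_gamma:k_moment:expression} but is harmless because only ratios of these integrals are used downstream.
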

\begin{proof}
Let $k \geq 0$ so that $2\delta - (k-1)\sigma^2 > 0$.
By setting $z=\sfrac{x}{\theta}$ in the integral in \eqref{inv_gamma:k_moment:expression}, we have:
\begin{align*}
\int_{\R_+}   x^k x^{ -\frac{2\delta}{\sigma^2}  - 2} \exp\tonde{ -\frac{2\delta}{\sigma^2} \frac{\theta}{x} } dx = \theta^{k - \frac{2\delta}{\sigma^2} -1 } \int_0^\infty z^{k - \frac{2\delta}{\sigma^2} -2 } \exp\tonde{ -\frac{2\delta}{\sigma^2} \frac{1}{z} }dz,
\end{align*}
and, by making the change of variables $t =\sfrac{2\delta}{\sigma^2} \cdot \sfrac{1}{z} $, we have
\begin{align*}
\int_0^\infty z^{k - \frac{2\delta}{\sigma^2} -2 } \exp\tonde{ -\frac{2\delta}{\sigma^2} \frac{1}{z} }dz = \tonde{\frac{2\delta}{\sigma^2}}^{k -1 - \frac{2\delta}{\sigma^2} }\int_0^\infty t^{\frac{2\delta}{\sigma^2} -k} e^{-t}dt = \tonde{\frac{2\delta}{\sigma^2}}^{k - 1 - \frac{2\delta}{\sigma^2} } \Gamma\tonde{\frac{2\delta}{\sigma^2} - k + 1},
\end{align*}
which yields \eqref{inv_gamma:k_moment:expression}.
\end{proof}

\begin{lemma}\label{N_player:regular:lemma_integrability}
For any $\theta > 0$, let $X^\theta = (X^\theta_t)_{t \geq 0}$ be the solution of 
\[
dX^{\theta}_t = \delta(\theta-X^\theta_t)dt + \sigma X^\theta_t dW_t, \quad X^\theta_0 = \xi.
\]
There exists a constant $C$ independent of $\theta$ so that it holds
\begin{equation*}
    \sup_{t \geq 0} \E[(X^\theta_t)^2]  \leq C(1 + \theta^2).
\end{equation*}
\end{lemma}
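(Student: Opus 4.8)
The plan is to apply It\^o's formula to $(X^\theta_t)^2$ and reduce the claim to a linear Gronwall estimate, the negative sign of the leading coefficient being supplied by the dissipativity Assumption \ref{assumption:dissipativity}. First I would compute, by It\^o's formula,
\[
d(X^\theta_t)^2 = \bigl[ -(2\delta - \sigma^2)(X^\theta_t)^2 + 2\delta\theta X^\theta_t \bigr]\,dt + 2\sigma (X^\theta_t)^2\,dW_t .
\]
Since the stochastic integral is a priori only a local martingale, I would localise along $\tau_n := \inf\{t \geq 0 : X^\theta_t \geq n\}$, apply the formula on $[0, t\wedge\tau_n]$ and take expectations so that the martingale term drops out; this produces an integral identity for $\E[(X^\theta_{t\wedge\tau_n})^2]$ to which Fatou's lemma is applied at the end, once the uniform bound is in hand.

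The second step controls the first moment. Taking expectations directly in the SDE for $X^\theta$ gives the linear ODE $\tfrac{d}{dt}\E[X^\theta_t] = \delta(\theta - \E[X^\theta_t])$, whose solution
\[
\E[X^\theta_t] = e^{-\delta t}\E[\xi] + (1 - e^{-\delta t})\theta
\]
is a convex combination of $\E[\xi]$ and $\theta$ and is therefore bounded, uniformly in $t \geq 0$, by $\E[\xi] + \theta$; here $\E[\xi] < \infty$ because $\mu_0$ has finite second (hence first) moment by Assumption \ref{assumption:dissipativity}. Substituting this into the localised expectation of the It\^o formula and writing $m_n(t) := \E[(X^\theta_{t\wedge\tau_n})^2]$, I obtain a Gronwall inequality of the form
\[
\frac{d}{dt} m_n(t) \leq -(2\delta - \sigma^2)\, m_n(t) + 2\delta\theta\bigl(\E[\xi] + \theta\bigr),
\]
to be read in its integral version coming from the stopped It\^o identity.

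Finally, Assumption \ref{assumption:dissipativity} guarantees $\kappa := 2\delta - \sigma^2 > 0$, so integrating the linear comparison ODE yields
\[
m_n(t) \leq e^{-\kappa t}\E[\xi^2] + \frac{2\delta\theta(\E[\xi] + \theta)}{\kappa}\bigl(1 - e^{-\kappa t}\bigr) \leq \E[\xi^2] + \frac{2\delta}{\kappa}\theta\bigl(\E[\xi] + \theta\bigr),
\]
uniformly in $t$ and $n$; letting $n \to \infty$ through Fatou's lemma transfers the bound to $\E[(X^\theta_t)^2]$. Using $\theta \leq \tfrac{1}{2}(1 + \theta^2)$ and absorbing all constants, which depend only on $\delta$, $\sigma$ and $\E[\xi^2]$ and are thus independent of $\theta$, gives $\sup_{t\geq 0}\E[(X^\theta_t)^2] \leq C(1 + \theta^2)$. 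The only genuinely delicate point is the localisation and integrability bookkeeping required to discard the stochastic integral and pass to the limit $n \to \infty$; everything else is a routine linear estimate.
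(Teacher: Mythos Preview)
Your proof is correct and follows essentially the same route as the paper: apply It\^o's formula to $(X^\theta_t)^2$, use the explicit first-moment formula $\E[X^\theta_t]=e^{-\delta t}\E[\xi]+(1-e^{-\delta t})\theta$ to bound the cross term, and exploit $2\delta-\sigma^2>0$ from Assumption~\ref{assumption:dissipativity} to integrate the resulting linear differential inequality. The only difference is that the paper writes down the variation-of-constants solution for $\E[(X^\theta_t)^2]$ directly and does not spell out the localisation step, whereas you carry it out explicitly via $\tau_n$ and Fatou's lemma; this is a matter of rigour bookkeeping rather than a different argument.
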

\begin{proof}
By It\^{o} formula, we have
\begin{align*}
    & d(X^{\theta}_t)^{2} = \left[-(2\delta - \sigma^2)(X^{\theta}_t)^{2} + 2\delta \theta (X^{\theta}_t) \right]dt + 2 \sigma  (X^{\theta}_t)^{2} dW_t, \quad (X_0^\theta)^{2}= \xi^{2},
\end{align*}
so that, by taking the expectation, we get
\[
    \E[(X^{\theta}_t)^{2}] = e^{-(2\delta - \sigma^2) t}\left( \E[\xi^2] + 2 \delta \theta \int_0^t \E[X^\theta_s] e^{-(2\delta - \sigma^2) s} ds \right).
\]
By \eqref{cce:regular:conditional_mean}, we have $\E[X^\theta_s] \leq C(1+\theta)$ for any $s \geq 0$ and, by Assumption \ref{assumption:dissipativity}, $2\delta - \sigma^2>0$.
Thus, it holds 
\begin{align*}
\E[(X^{\theta}_t)^{2}] & \leq e^{-(2\delta - \sigma^2) t}\left( \E[\xi^2] + C(1 + \theta)  2 \delta \theta \int_0^t e^{(2\delta - \sigma^2) s} ds \right) \\
& \leq C(1 + \theta^2) + e^{-(2\delta - \sigma^2) t} \left( \E[\xi^2] - C_1 (1 + \theta^{2}) \right) \leq C(1+\theta^2).
\end{align*}
This completes the proof.
\end{proof}

\subsection*{Auxiliary Results for the Backward Convergence Problem}

\begin{lemma}\label{N_player:lemma:ergodic_limit_q_power}
Suppose $\E[\theta_\infty^2]<\infty$.
Let $(X^i)_{i= 1}^N$ be i.i.d. as $Y$ conditionally to $\theta_\infty$, with either $Y=X^{\lambda^r}$ or $Y=X^{\lambda^s}$, and let $\kappa(dx,\theta)$ be equal to $p^r_\infty(dx,\theta)$ given by \eqref{cce:regular:limit_kernel} or equal to $p^s_\infty(dx,\theta)$ given by \eqref{cce:singular:limit_kernel}.
Let $\theta^{-i,N}_t = \frac{1}{N-1}\sum_{j \neq i}X^j_t$, for $t \geq 0$, $1 \leq i \leq N$.
Then, it holds
\begin{equation}\label{N_player:regular:ergodic_limit}
\begin{aligned}
    \lim_{ T \uparrow \infty} & \frac{1}{T} \int_0^T \E \left[ \left \vert \big( \theta^{-i,N}_t \big)^{\beta} - \theta_\infty^\beta \right \vert^2 \Big \vert \theta_\infty \right] dt  = \int_{\R_+^{N-1}} \Big \vert \big( \frac{1}{N-1}\sum_{j \neq i}x_j \big)^\beta - \theta_\infty^\beta \Big \vert^2 \bigotimes_{j \neq i}\kappa(dx_j,\theta_\infty), \; \prob\text{-a.s.}
\end{aligned}
\end{equation}
\end{lemma}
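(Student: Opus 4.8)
The plan is to reduce the statement to a conditional ergodic argument at a fixed value of the correlating mean and then integrate back. Write $\bar{x}:=\frac{1}{N-1}\sum_{j\neq i}x_j$ and $F_\theta(x):=\abs{\bar{x}^{\beta}-\theta^{\beta}}^2$, and condition on the event $\{\theta_\infty=\theta\}$, denoting by $\E_\theta$ the corresponding (regular) conditional expectation. By Proposition \ref{cce:regular:prop_corr_strategy} in the regular case (resp. Proposition \ref{cce:singular:prop_consistency} in the singular case), conditionally on this event the processes $(X^j)_{j\neq i}$ are i.i.d.\ copies of the one-dimensional ergodic diffusion $X^{\lambda,\theta}$ whose unique stationary law is $\kappa(dx,\theta)$; hence the $(N-1)$-dimensional process $(X^j_t)_{j\neq i}$ is ergodic with invariant measure $\bigotimes_{j\neq i}\kappa(dx_j,\theta)$. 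Setting $h_t(\theta):=\E_\theta\bigl[F_\theta\bigl((X^j_t)_{j\neq i}\bigr)\bigr]$, the left-hand side of \eqref{N_player:regular:ergodic_limit} equals $\frac{1}{T}\int_0^T h_t(\theta)\,dt$ (by Tonelli, since $F_\theta\ge 0$), while the right-hand side is $G(\theta):=\int_{\R_+^{N-1}}F_\theta\,\bigotimes_{j\neq i}\kappa(dx_j,\theta)$, which is finite because $F_\theta\le C(1+\bar{x}^2+\theta^2)$ and $\kappa(\cdot,\theta)$ has finite second moment by Lemma \ref{lemma:inv_gamma} (regular) or Lemma \ref{geometric_bm:reflected:lemma} (singular).

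First I would show $h_t(\theta)\to G(\theta)$ as $t\to\infty$, for $\rho$-a.e.\ $\theta$. The marginal law of $X^{\lambda,\theta}_t$ converges to $\kappa(\cdot,\theta)$ in total variation (Propositions \ref{cce:regular:prop_corr_strategy} and \ref{cce:singular:prop_consistency}), so the product law of $(X^j_t)_{j\neq i}$ converges to $\bigotimes_{j\neq i}\kappa(\cdot,\theta)$ in total variation as well. The hard part will be upgrading this weak-type convergence to convergence of the (unbounded) integrand, i.e.\ establishing uniform integrability. For this I would isolate the exponent $r:=\frac{1}{\beta}>1$, which satisfies $2r\beta=2$: using $\abs{a-b}^{2r}\le c(\abs{a}^{2r}+\abs{b}^{2r})$ together with the convexity bound $\bar{x}^2\le\frac{1}{N-1}\sum_{j\neq i}x_j^2$ and the uniform-in-$t$ second-moment estimate of Lemma \ref{N_player:regular:lemma_integrability} (regular case) or point \ref{geometric_bm:reflected:lemma:integrability} of Lemma \ref{geometric_bm:reflected:lemma} with $a=a(\theta)$ (singular case), one obtains
\[
\sup_{t\ge 0}\E_\theta\bigl[F_\theta\bigl((X^j_t)_{j\neq i}\bigr)^{r}\bigr]\le C(1+\theta^2)<\infty .
\]
This uniform $L^r$-bound with $r>1$ gives uniform integrability of $\{F_\theta((X^j_t)_{j\neq i})\}_{t\ge 0}$, so total-variation convergence of the laws yields $h_t(\theta)\to G(\theta)$.

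Finally, since a convergent function of $t$ has convergent Cesàro averages, $\frac{1}{T}\int_0^T h_t(\theta)\,dt\to G(\theta)$ for $\rho$-a.e.\ $\theta$, which is exactly \eqref{N_player:regular:ergodic_limit} after conditioning on $\theta_\infty=\theta$; as the exceptional set of $\theta$ is $\rho$-null, the convergence holds $\prob$-a.s.\ in $\theta_\infty$. An equivalent route, parallel to the proof of Theorem \ref{central_planner:thm:approximation}, is to apply the pointwise ergodic theorem to get $\frac{1}{T}\int_0^T F_\theta\bigl((X^j_t)_{j\neq i}\bigr)\,dt\to G(\theta)$ $\prob_\theta$-a.s., and then to pass $\E_\theta$ through the limit by the same uniform-integrability estimate (now applied to the time-averages via Jensen) together with \cite[Lemma 4.12]{kallenberg_foundations}; either way the only genuinely delicate points are the moment computation above and the measure-theoretic bookkeeping that makes conditioning on $\theta_\infty=\theta$ and the use of Tonelli legitimate.
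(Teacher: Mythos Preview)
Your proposal is correct, and the alternative route you sketch at the end is exactly the paper's proof: condition on $\theta_\infty=\theta$, invoke the pointwise ergodic theorem to get $\prob^\theta$-a.s.\ convergence of the time average to $G(\theta)$, then upgrade to $L^1(\prob^\theta)$ via the same $L^r$-bound with $r=1/\beta$ (which is the paper's choice too) together with \cite[Lemma 4.12]{kallenberg_foundations}. Your primary route---showing $h_t(\theta)\to G(\theta)$ from convergence of the fixed-time laws plus uniform integrability, then Ces\`aro averaging---is a legitimate minor variant that avoids the ergodic ratio theorem at the cost of needing the convergence of transition laws; note that in the singular case Proposition~\ref{cce:singular:prop_consistency} only states weak convergence, but this suffices since $F_\theta$ is continuous (and in any case positive recurrence from Lemma~\ref{geometric_bm:reflected:lemma} gives TV convergence by the same Borodin--Salminen reference used in the regular case).
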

\begin{proof}
Suppose without loss of generality that $(\Omega,\F,\prob)$ is a Polish space (if it is not, we work on the canonical space).
Consider the regular conditional probability of $\prob$ given $\theta_\infty$. Denote the regular conditional probability of $\prob$ given $\theta_\infty = \theta$ by $\prob^\theta(\cdot) = \prob(\cdot \, \vert \theta_\infty = \theta)$, and by $\E^\theta[\cdot]$ the expectation with respect to the probability measure $\prob^\theta$.
Conditionally to $\theta_\infty = \theta$, we have that $(X^{j})_{j \neq i}$ are i.i.d. as $Y$; thus, in particular, the process $\theta^{-i,N}$ is a positively recurrent regular diffusion with ergodic measure $\bigotimes_{j \neq i} \kappa(dx_j,\theta)$ (see, e.g. \cite[Lemmata 23.17-19]{kallenberg_foundations}).
By the ergodic ratio theorem, it holds 
\begin{equation}\label{N_player:lemma:ergodic_limit_q_power:pointwise}
\lim_{ T \uparrow \infty} \frac{1}{T} \int_0^T \left \vert \big( \theta^{-i,N}_t \big)^{\beta} - \theta^\beta \right \vert^2 dt = \int_{\R_+^{N-1}} \Big \vert \big( \frac{1}{N-1}\sum_{j \neq i}x_j \big)^\beta - \theta^\beta \Big \vert^2 \bigotimes_{j \neq i}\kappa(dx_j,\theta), \quad \prob^\theta\text{-a.s.}.
\end{equation}
Therefore, convergence in probability with respect to the probability measure $\prob^\theta$ holds as well.
In order to get convergence in $L^1$ as well, we show that the family of random variables in the left hand-side of \eqref{N_player:lemma:ergodic_limit_q_power:pointwise} is uniformly integrable. 
By, e.g., \cite[Lemma 4.12]{kallenberg_foundations}, this implies that we can take the expectation with respect to $\prob^\theta$ and exchange the limit and expectation, to get
\begin{equation*}
\begin{aligned}
    \limsup_{ T \uparrow \infty} & \frac{1}{T} \int_0^T \E^\theta \left[ \left \vert \big( \theta^{-i,N}_t \big)^{\beta} - \theta_\infty^\beta \right \vert^2 \right] dt = \int_{\R_+^{N-1}} \Big \vert \big( \frac{1}{N-1}\sum_{j \neq i}x_j \big)^\beta - \theta^\beta \Big \vert^2 \bigotimes_{j \neq i}\kappa(dx_j,\theta),
\end{aligned}
\end{equation*}
holds for $\rho$-a.e. $\theta > 0$, which is equivalent to \eqref{N_player:regular:ergodic_limit}.
To verify uniform integrability, take $r = \sfrac{1}{\beta} > 1$.
By standard estimates, we have
\begin{align*}
    \E^\theta & \left[ \left\vert  \frac{1}{T} \int_0^T \left \vert \big( \theta^{-i,N}_t \big)^{\beta} - \theta^\beta \right \vert^2 dt \right\vert^r \right] \leq \frac{2^{2r -1}}{T} \int_0^T \left( \theta^{2 r \beta} + \E^\theta\left[ \big( \theta^{-i,N}_t \big)^{ 2 r \beta} \right] \right) dt \\
    & \leq C \left(  \theta^{2} + \sup_{t \geq 0} \E^\theta[\vert Y_t \vert^{2}] \right) \leq C (1+\theta^{2}),
\end{align*}
where last inequality holds true by Lemma \ref{N_player:regular:lemma_integrability} if $Y = X^{\lambda^r}$, and by Lemma \ref{geometric_bm:reflected:lemma} if $Y = X^{\lambda^s}$.
This implies that such family is bounded in $L^r$-norm, thus, since $r > 1$, uniformly integrable.
\end{proof}

\begin{lemma}\label{N_player:regular:lemma:convergence_empirical_measures}
Let $\kappa(dx,\theta)$ be either equal to $p^r_\infty(dx,\theta)$ given by \eqref{cce:regular:limit_kernel} or equal to $p^s_\infty(dx,\theta)$ given by \eqref{cce:singular:limit_kernel}.
Then, for any $\theta > 0$, it holds
\begin{equation}\label{N_player:regular:convergence_fixed_m}
    \lim_{N \to \infty} \int_{\R_+^{N-1}} \Big \vert \big( \frac{1}{N-1}\sum_{j \neq i}x_j \big)^\beta - \theta^\beta \Big \vert^{2} \bigotimes_{j \neq i} \kappa (dx_j,\theta) = 0.
\end{equation}
\end{lemma}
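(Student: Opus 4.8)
The plan is to read the $(N-1)$-fold product measure $\bigotimes_{j\neq i}\kappa(dx_j,\theta)$ as the law of an i.i.d.\ family $(x_j)_{j\neq i}$ with common marginal $\kappa(\cdot,\theta)$, and to reduce the statement to a law-of-large-numbers estimate for the empirical mean $S_N := \frac{1}{N-1}\sum_{j\neq i}x_j$. The two facts I would record first are that, in both cases, $\kappa(\cdot,\theta)$ has mean exactly $\theta$ and a finite second moment. The mean identity is precisely the consistency relation of Propositions \ref{cce:regular:prop_corr_strategy} and \ref{cce:singular:prop_consistency}; equivalently, it follows from \eqref{geometric_bm:reflected:barrier_mean} evaluated at $a=a(\theta)$ when $\kappa=p^s_\infty$, and from Lemma \ref{lemma:inv_gamma} with $k=1$ when $\kappa=p^r_\infty$. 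In either case $\E[S_N]=\theta$. The finiteness of the second moment follows from point \ref{geometric_bm:reflected:lemma:finite_moments} of Lemma \ref{geometric_bm:reflected:lemma} when $\kappa=p^s_\infty$, and from Lemma \ref{lemma:inv_gamma} with $k=2$, which is admissible exactly because Assumption \ref{assumption:dissipativity} gives $2\delta-\sigma^2>0$, when $\kappa=p^r_\infty$.

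With these two facts the core estimate is elementary and quantitative. Since $\beta\in(0,1)$, the map $t\mapsto t^\beta$ is subadditive on $\R_+$, which yields the pointwise bound $|S_N^\beta-\theta^\beta|\le|S_N-\theta|^\beta$, hence $|S_N^\beta-\theta^\beta|^2\le|S_N-\theta|^{2\beta}$. I would then apply Lyapunov's inequality with the exponents $2\beta\le 2$ to get $\E[|S_N-\theta|^{2\beta}]\le\big(\E[|S_N-\theta|^2]\big)^{\beta}$. Finally, since the $x_j$ are i.i.d.\ with mean $\theta$, the variance of the empirical mean satisfies $\E[|S_N-\theta|^2]=\mathrm{Var}(S_N)=\frac{1}{N-1}\mathrm{Var}(x_1)$. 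Chaining these three steps gives
\[
\int_{\R_+^{N-1}} \Big\vert \tfrac{1}{N-1}\textstyle\sum_{j \neq i}x_j \Big)^{\!\beta} \!\!-\theta^\beta \Big\vert^{2} \bigotimes_{j \neq i} \kappa (dx_j,\theta) \;\le\; \Big( \frac{\mathrm{Var}(x_1)}{N-1} \Big)^{\beta},
\]
whose right-hand side vanishes as $N\to\infty$, which is exactly \eqref{N_player:regular:convergence_fixed_m}.

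I do not expect a genuine obstacle: the only points deserving care are the two moment properties of $\kappa(\cdot,\theta)$, and both are immediate consequences of results already proved (Lemmata \ref{geometric_bm:reflected:lemma} and \ref{lemma:inv_gamma} together with the consistency propositions). An alternative, slightly heavier route would be to invoke the strong law of large numbers for $S_N\to\theta$ a.s., continuity of $t\mapsto t^\beta$ for $S_N^\beta\to\theta^\beta$ a.s., and then uniform integrability of $|S_N^\beta-\theta^\beta|^2$ (bounded in $L^{1/\beta}$ via the finite second moment, since $(S_N^\beta)^{2/\beta}=S_N^2$) to upgrade almost sure to $L^1$ convergence. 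However, the direct Hölder--Lyapunov estimate above is cleaner, avoids any appeal to uniform integrability, and moreover produces the explicit rate $O(N^{-\beta})$.
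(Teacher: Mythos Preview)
Your proof is correct, and in fact cleaner than the paper's. The paper takes precisely the ``alternative, slightly heavier route'' you sketch at the end: it invokes the strong law of large numbers for $\bar Y^n\to\theta$ a.s., continuity of $t\mapsto t^\beta$, and then upgrades to $L^1$ convergence by establishing uniform integrability via a uniform bound in $L^r$ with $r=1/\beta$ (using $\E[(\bar Y^n)^2]\le\E[Y_1^2]<\infty$). Your primary argument, by contrast, exploits the subadditivity inequality $|a^\beta-b^\beta|\le|a-b|^\beta$ together with Lyapunov's inequality and the elementary variance identity for i.i.d.\ sums, avoiding any appeal to almost-sure convergence or uniform integrability and yielding the explicit rate $O((N-1)^{-\beta})$. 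Both approaches rely on exactly the same two ingredients about $\kappa(\cdot,\theta)$---mean equal to $\theta$ and finite second moment---which you justify correctly via Lemmata \ref{geometric_bm:reflected:lemma} and \ref{lemma:inv_gamma} and Assumption \ref{assumption:dissipativity}.
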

\begin{proof}
Let $(Y_i)_{i \geq 1}$ be a sequence of i.i.d random variables with law $\kappa(dx,\theta)$, defined on some probability space $(X,\mathcal{X},\mu)$.
Up to reindexing, the integral in \eqref{N_player:regular:convergence_fixed_m} can be expressed in terms of the expectation of a function of $\bar{Y}^n = \sfrac{1}{n}\sum_{i=1}^n Y_i$.
Since $(Y^i)_{i \geq 1}$ are i.i.d. as $\kappa(dx,\theta)$ and integrable, we have
\[
(\bar{Y}^n)^\beta \to \theta^\beta \quad \mu\text{-a.s.}
\]
by the strong law of large number and continuity of the function $x \mapsto x^\beta$.
Therefore, convergence in probability holds as well.
To conclude, we show that the sequence $( \vert (\bar{Y}^n)^\beta - \theta^\beta \vert^2 )_{n \geq 1}$ is uniformly bounded in $L^r$-norm, for some $r>1$, which implies that the sequence is uniformly integrable, and thus the convergence in $L^2$-norm holds by, e.g., \cite[Proposition 4.12]{kallenberg_foundations}.
Let $r = \sfrac{1}{\beta} > 1$.
By standard estimates, we have
\begin{equation*}
\begin{aligned}
    \E & [ (\vert (\bar{Y}^n)^\beta - \theta^\beta \vert^2)^r ] \leq 2^{2 r - 1 }\left( \theta^{2} + \E[(\bar{Y}^n)^{2}] \right) \leq C\left( \theta^2 +  \E[Y_1^{2}] \right),
\end{aligned}
\end{equation*}
where in the last inequality we used the identical distribution of the sequence $(Y_i)_{i \geq 1}$.
The expectation of $Y_1^2$ is finite by Lemma \ref{lemma:inv_gamma} if $\kappa = p^r_\infty$ and by Lemma \ref{geometric_bm:reflected:lemma} if $\kappa = \Tilde{p}$.
This concludes the proof.
\end{proof}

\subsection*{A Technical Result on the Exchange of Limits}

\begin{lemma}\label{lemma:moore_osgood_liminf}
Let $(a_{n,m})_{n,m \geq 1}$ be a real valued sequence. Suppose that the following holds:
\begin{enumerate}[label = \arabic*.]
    \item \label{lemma:moore_osgood_liminf:assumption_uniform} $\lim_{n \to \infty} a_{n,m} = b_m$ uniformly in $m$, and
    \item $\liminf_{m \to \infty} a_{n,m} = c_n$ for every $n \geq 1$.
\end{enumerate}
Then, it holds
\begin{equation}
    \liminf_{m \to \infty} \lim_{n \to \infty} a_{n,m} \leq \lim_{n \to \infty} \liminf_{m \to \infty} a_{n,m}.
\end{equation}
\end{lemma}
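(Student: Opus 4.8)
The plan is to exploit the uniformity in the first assumption to transfer $\varepsilon$-estimates from the doubly-indexed sequence $(a_{n,m})$ to its pointwise limit $b_m = \lim_{n \to \infty} a_{n,m}$, and then to pass to the inferior limit in $m$. Write $L := \liminf_{m \to \infty} b_m$ for the left-hand side and recall that $c_n = \liminf_{m \to \infty} a_{n,m}$ is well defined by the second assumption. I will in fact establish the stronger conclusion $\lim_{n \to \infty} c_n = L$, which yields the asserted inequality (indeed with equality) and, as a byproduct, confirms that the limit appearing on the right-hand side exists.

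First I would fix $\varepsilon > 0$ and use the uniform convergence $a_{n,m} \to b_m$ to select an integer $N$ with
\[
b_m - \varepsilon \;\leq\; a_{n,m} \;\leq\; b_m + \varepsilon \qquad \text{for all } n \geq N \text{ and all } m \geq 1.
\]
This is the sole point at which uniformity enters, and it is precisely what is needed: it furnishes a single threshold $N$ valid simultaneously for every $m$, which is what will license the interchange of $\lim_{n}$ and $\liminf_{m}$.

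Next I would apply $\liminf_{m \to \infty}$ to each of the two inequalities above. Using that the inferior limit is monotone (if $x_m \leq y_m$ for all $m$ then $\liminf_m x_m \leq \liminf_m y_m$) and that it is translation-invariant (so $\liminf_m (b_m \pm \varepsilon) = L \pm \varepsilon$), the lower bound gives $c_n \geq L - \varepsilon$ and the upper bound gives $c_n \leq L + \varepsilon$, for every $n \geq N$. Hence $\lvert c_n - L \rvert \leq \varepsilon$ whenever $n \geq N$.

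Since $\varepsilon > 0$ was arbitrary, this shows $c_n \to L$ as $n \to \infty$; in particular $\lim_{n \to \infty} c_n$ exists and equals $L = \liminf_{m \to \infty} b_m$, which is the desired statement. The argument is elementary, and the only step requiring genuine care is tracking the directions of the inequalities when passing to $\liminf_m$, together with the recognition that uniformity in the first hypothesis is exactly the ingredient making both $\varepsilon$-bounds hold for all $m$ beyond a common index $N$.
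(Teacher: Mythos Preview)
Your argument is correct and, in fact, yields the sharper conclusion $\lim_{n}c_n = \liminf_{m} b_m$, so the inequality in the lemma holds with equality. The route, however, differs from the paper's. The paper selects a subsequence $(m_k)$ along which the $\liminf_m$ is realised, invokes the classical Moore--Osgood theorem on the subarray $(a_{n,m_k})$ to interchange the two limits, and then compares $\liminf_m b_m$ with the subsequential limit $\lim_k b_{m_k}$. Your proof bypasses Moore--Osgood entirely: you use the uniform convergence directly to sandwich $a_{n,m}$ between $b_m \pm \varepsilon$ for all $m$ once $n$ is large, and then pass to the $\liminf$ in $m$. This is more elementary and self-contained (no black-box theorem is needed), and it makes the existence of $\lim_n c_n$ transparent. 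The only point you might wish to add for completeness is a remark on the case $L=\liminf_m b_m \in \{\pm\infty\}$, where the bound $L\pm\varepsilon$ requires the obvious interpretation; the argument goes through unchanged in the extended reals.
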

\begin{proof}
For any $n \geq 1$, consider a subsequence $(a_{n,m_k})_{n,k \geq 1}$ so that $c_n = \liminf_{m \to \infty} a_{n,m} = \lim_{k \to \infty} a_{n,m_k}$.
Since \ref{lemma:moore_osgood_liminf:assumption_uniform} is satisfied by the subsequence $(a_{n,m_k})_{n,k \geq 1}$ as well, Moore-Osgood theorem implies that there exists $A \in \R$ so that
\begin{equation}\label{lemma:moore_osgood_liminf:equality_subsequence}
     \lim_{n \to \infty}\lim_{k \to \infty} a_{n,m_k} = A = \lim_{k \to \infty} \lim_{n \to \infty}a_{n,m_k}.
\end{equation}
In particular, \eqref{lemma:moore_osgood_liminf:equality_subsequence} implies that $(b_{m_k})_{k \geq 1}$ is a convergent subsequence of $(b_m)_{m \geq 1}$.
Therefore, we have
\[
    \liminf_{m \to \infty} \lim_{n \to \infty} a_{n,m} = \liminf_{m \to \infty} b_m \leq \lim_{k \to \infty}b_{m_k} = \lim_{k \to \infty} \lim_{n \to \infty} a_{n,m_k} =  \lim_{n \to \infty} \lim_{k \to \infty}a_{n,m_k} = \lim_{n \to \infty} \liminf_{m \to \infty} a_{n,m} ,
\]
where last equality holds by definition of $(a_{n,m_k})_{n,m \geq 1}$. This concludes the proof.
\end{proof}

\vspace{0.25cm}

\subsection*{Acknowledgements}
The authors acknowledge the Deutsche Forschungsgemeinschaft (DFG, German Research Foundation) - Project-ID 317210226 - SFB 1283.

The first author acknowledges financial support by the European Union - NextGenerationEU - Project Title Probabilistic Methods for Energy Transition - CUP G53D23006840001 - Grant Assignment Decree No. 1379 adopted on 01/09/2023 by the Italian Ministry of University and Research (MUR).

\smallskip
We thank the two anonymous referees for their careful reading of the manuscript and their helpful suggestions.

\subsection*{Disclosure statement} The authors declare that none of them has conflict of interests to
mention.

\bibliographystyle{abbrv}
\bibliography{biblio}

\end{document}